\documentclass[12pt]{amsart}
\usepackage{geometry} 
\geometry{a4paper} 

\pagestyle{plain}

\usepackage{ulem}
\usepackage{pgfplots}
\usepackage{psfrag}
\usepackage{amsmath}
\usepackage{calc}
\usepackage{systeme}
\usepackage{amsfonts}
\usepackage{amsthm}
\usepackage{algorithm2e}
\usepackage[applemac]{inputenc}
\usepackage{bbold}
\usepackage[numbers]{natbib}
\newtheorem{hyp}{Hypothesis}

\numberwithin{equation}{section}

\author{%
  Charles Bertucci $^1$  }

\newtheorem{Theorem}{Theorem}[section]
\newtheorem{Lemma}{Lemma}[section]
\newtheorem{Rem}{Remark}[section]
\newtheorem{Def}{Definition}[section]
\newtheorem{Prop}{Proposition}[section]

\usepackage[foot]{amsaddr}

\newcommand{\be}{\begin{equation}}
\newcommand{\ee}{\end{equation}}
\newcommand{\R}{\mathbb{R}}
\newcommand{\T}{\mathbb{T}}
\newcommand{\eps}{\epsilon}
\newcommand{\mone}{\mathcal{M}_1(\T^d)}

\newcommand{\dmU}{\frac{\delta U}{\delta m}}
\newcommand{\mptd}{\mathcal{P}(\mathbb{T}^d)}
\newcommand{\mmtd}{\mathcal{M}(\mathbb{T}^d)}


\title{Monotone solutions for mean field games master equations: continuous state space and common noise}
\thanks{$^1$ : CMAP, Ecole Polytechnique, UMR 7641, 91120 Palaiseau, France
}
\date{} 

\begin{document}
\maketitle
\begin{abstract}
We present the notion of monotone solution of mean field games master equations in the case of a continuous state space. We establish the existence, uniqueness and stability of such solutions under standard assumptions. This notion allows us to work with solutions which are merely continuous in the measure argument, in the case of first order master equations. We study several structures of common noises, in particular ones in which common jumps (or aggregate shocks) can happen randomly, and ones in which the correlation of randomness is carried by an additional parameter.\end{abstract}
\tableofcontents
\section*{Introduction}

This paper introduces the notion of monotone solution for mean field games (MFG in short) master equations in the case of a continuous state space. Using this notion, we establish results of existence and uniqueness for merely continuous solutions of master equations, which are non-linear first order infinite dimensional partial differential equations (PDE in short) at the core of the MFG theory. Even though this paper is self-contained, it is the follow-up of \citep{bertucci2020monotone} in which we presented a similar notion in the simpler case of a finite state space. 

\subsection*{General introduction}
MFG are dynamic games involving a crowd of non-atomic agents. If such games have a tremendous number of applications in several fields, they naturally arise in Economics, and they actually did so in the eighties and nineties. A general mathematical framework to study such games (as well as the terminology MFG) has been introduced by J.-M. Lasry and P.-L. Lions in \citep{lasry2007mean,lions2007cours}. We present here some general aspects of this theory, focusing on Nash Equilibria and the concept of value of such games. The study of the value of a MFG reduces to the analysis of a PDE called the master equation \citep{lions2007cours,cardaliaguet2019master}. The fact that a value can be defined is a consequence of strong uniqueness properties of Nash equilibria in the so-called monotone regime, in which, formally, players are adversarial to one another. The master equation is generally an infinite dimensional non linear PDE as soon as the state space of the players is continuous. Another striking property of MFG is that when the randomness (or noise) affecting the players is distributed in an i.i.d. fashion among them, Nash equilibria can be characterized with a system of forward-backward PDE in finite dimension \citep{lasry2007mean,lions2007cours,huang2006large,porretta2015weak}. Let us note that, in this situation, several Nash equilibria can coexist. An important aspect of the MFG theory is the so-called probabilistic approach \citep{carmona2018probabilistic,lacker2016general}, which we shall not particularly use in this paper. Finally, we end this general introduction by mentioning the question, that we do not treat here, of the convergence of $N$-players games toward MFG, which is a possible way to justify the PDE arising in MFG theory and which has been partially solved at this point  \citep{cardaliaguet2019master,lacker2018convergence}.
 
\subsection*{Regularity of the solution of the master equation}
In the aforementioned monotone regime, the uniqueness result established by Lasry and Lions \citep{lasry2007mean} makes it meaningful to define a value function associated to a MFG. This value function associates the value of the game $U(t,x,m)$ to a player in a state $x$, when the remaining time in the game is $t$ and the measure $m$ describes the repartition of the other players. In this monotone regime, if it is smooth, the value function can be characterized as the unique solution of the master equation \citep{lions2007cours,cardaliaguet2019master}. A natural and fundamental question in the MFG theory is the following : If the value function is not smooth, can it still be characterized as the unique weak solution of the master equation ? The difficulty here lies in the definition of weak solutions one has to choose. In this paper, we answer positively to this question in the monotone regime. Namely, to define our monotone solutions, we only need continuity of the value function in the measure argument for first order master equations and first order regularity with respect to the measure argument for second order master equations. This paper is the extension of \citep{bertucci2020monotone} in which we treated this problem in the case of a finite state space. Let us insist on the fact that, outside the monotone regime, the concept of a value of a MFG does not make sense because of the multiplicity of equilibria. If mathematical questions can still be asked on the master equation in a general regime, their link with the underlying game and their practical interest is doubtful.

In the last years, the question of the regularity of solutions of master equations has raised quite a lot of interest. In \citep{cardaliaguet2019master}, the authors shew that, under strong assumptions, the value function turns out to be smooth and, thus, the unique solution of the master equation. Alternatively, the monotone regime proved to be regularizing in the finite state space case \citep{lions2007cours,bertucci2019some,bertucci2021master}. More recently, several teams have addressed the issue of defining weak solutions of master equations in several context (which are not the monotone regime): \citep{cecchin2019convergence,cecchin2022selection} propose ways of selecting a weak solution in finite state space, particularly in the potential case; \citep{mou,gangbo,zhang} introduced other notions of weak solutions of the master equation. In \citep{mou}, the authors gave a characterization of the value function which relies on the stability of the value, due to the monotone regime. Such an idea is also present in this paper, even if we are here able to characterize the value function without using the system of characteristics. In \citep{gangbo} the authors are concerned with the potential case, in which the master equation is reduced to a Hamilton-Jacobi equation in infinite dimension. In \citep{zhang}, the authors studied the master equation under other geometric assumptions than the monotone one (which they called displacement monotonicity by reference to displacement convexity).  Let us also mention the paper \citep{zhang2} which is concerned with the study of the master equation in a non-monotone regime. Up to this point, no general framework has been proposed and we think that this paper can start one, in the monotone regime.

\subsection*{Modeling of common noise}
An objective of this paper is to introduce a notion of weak solution for the MFG master equations (in the monotone regime). As already mentioned, in the absence of a common noise, the study of the master equation is not necessary. Hence, it is natural to present our notion of solution in cases involving a common noise. Up to now, most of the mathematical literature on MFG \citep{cardaliaguet2019master,carmona2018probabilistic,carmona2016common,cardaliaguet2022first} is concerned with the following common noise: the state of all players is affected by a common Brownian motion. This noise has several specificities: it yields second order terms (with respect to the measure variable) in the master equation; it is not particularly regularizing (at least this has not been established up to now and it is quite unlikely since the arising terms are only "degenerate elliptic"); it induces a singular behavior for the underlying measure which describes the repartition of players in the state space. Indeed in this context, this measure is randomly pushed with a force which is a Brownian motion. If this type of noise is certainly helpful to model numerous situations, we here argue that it is undeniably not the most general situation and that several other models can be of interest in many applications, and that those models do not raise mathematical questions as difficult as the one we just mentioned. Let us precise that the second to last Section of this paper is concerned with this often-studied common noise.\\

A first type of common noise we insist on later on in the paper, is a setting in which the correlation in the randomness affecting the players is carried by an additional parameter. Typical examples for such kinds of models are MFG involving players which interact on a market through stochastic environmental variables, such as prices for instance. In such a context, it is natural to expect that the value of the game depends on this additional parameter, and therefore that the associated master equation depends on derivatives (possibly second order ones) of the value function with respect to these additional parameters. Clearly, if this additional parameter is finite dimensional, then the master equation stays a first order PDE (with respect to the measure variable) despite modeling a MFG with common noise.\\

A second type of noise we want to model is one similar to the common noise introduced in \citep{bertucci2019some} in the case of a finite state space. This type of noise consists in assuming that at random times, which are common to all players, a transformation is going to affect all players in the game. Such a type of noise is adequate to model aggregate shocks (to use a terminology from Economics) which may occur at random times. In addition to its intrinsic (mathematical) interest, this type of noise is helpful to approximate other types of common noises, such as the one in which all the players are pushed by the same Brownian motion.

\subsection*{Main results of the paper}
The major part of this paper is concerned with defining notions of monotone solutions and establishing results of uniqueness of such solutions in various contexts. We do so for stationary and time dependent master equations, without common noise, in Section \ref{sec:mon}. We extend this notion to situations involving common noise in three different settings in Section \ref{sec:firstcom}, all of which leads to master equation of first order. In all the previous cases, the value function is only assumed to be continuous with respect to the measure argument. We then partially treat the case of master equations with common noise of second order in Section \ref{sec:secondorder}. We say partially because in this context, the value function is required to be differentiable with respect to the measure argument. This still consists in an improvement with respect to the results of \citep{cardaliaguet2019master}, but it could be possible to extend this notion to merely continuous functions (in the measure argument).

We also provide a priori estimates concerning the regularity of the solution of the master equation, namely we prove Propositions \ref{holderU} and \ref{prop:est}. The former is an extension of a result of \citep{cardaliaguet2019master} in a case without common noise, while the latter is concerned with master equations involving a common noise. We use those two a priori estimates to prove three results of existence of monotone solutions. The first one is Theorem \ref{existmon}. It states the existence of monotone solutions of master equations without common noise, and it does not state more than H\"older regularity with respect to the measure argument for the value function. The second one is Theorem \ref{thm:existt} and it is concerned with the existence of monotone solutions of a first order master equation involving common noise. The last result of existence is about monotone solutions for second order master equations and it is given in Theorem \ref{thm:exist2}. All those results are clear improvements from the existing results of existence of solutions of master equations, as they require quite less restrictive smoothness assumptions on the datas of the problem, like it is the case in \citep{cardaliaguet2019master} for instance.

Additionally, we also make the link between common noise which yields first order terms in the master equation and common noise which leads to second order terms in the master equation. This is done in Section \ref{sec:translations}.

\subsection*{Structure of the paper}
In Section \ref{sec:notation}, we introduce some notation as well as recall some results concerning derivatives in the space of probability measure. In Section \ref{sec:model}, we present the main MFG model underlying the master equations we study, as well as some known results concerning MFG master equations. We proceed by introducing our notion of monotone solution in Section \ref{sec:mon}. We discuss the question of the existence of monotone solutions in Section \ref{sec:exist}. Sections \ref{sec:firstcom} and \ref{sec:secondorder} deal with respectively monotone solutions of master equations of first and second order, in the presence of a common noise. We conclude this paper and present perspectives and extensions of this work in Section \ref{sec:conclusion}.

\section{Notation and derivatives in the space of probability measure}\label{sec:notation}
In this somehow introductive section, we present some notations, especially concerning derivatives in a set of probability measures, as well as some basic results on those derivatives that we shall need later on.
\begin{itemize}
\item We denote by $d$ an integer greater than $1$ which refers to the dimension of the players' state space.
\item We denote by $\mathbb{T}^d$ the $d$ dimensional torus whose inner scalar product is denoted by $\cdot$, i.e. $x\cdot y$ denotes the scalar product between $x,y \in \mathbb{T}^d$.
\item The set of measures on $\mathbb{T}^d$ is denoted by $\mmtd$. For $m \in \mmtd$, we denote its support by $Supp(m)$.
\item We denote by $\mptd$ the set of probability measures on $\mathbb{T}^d$. This set is equipped with the Monge-Kantorovich (or $1$-Wasserstein) distance $\textbf{d}_1$ defined with
\begin{equation}
\forall µ,\nu \in \mptd, \textbf{d}_1(µ,\nu) = \sup_{\phi} \int_{\mathbb{T}^d}\phi(x) (µ-\nu)(dx),
\end{equation}
where the supremum is taken over all Lipschitz functions $\phi :\mathbb{T}^d \to \mathbb{R}$ with a Lipschitz constant at most one. We recall that $\left(\mptd,\textbf{d}_1\right)$ is a compact metric set.
\item We denote by $\langle \cdot,\cdot\rangle$ the scalar product of $L^2(\mathbb{T}^d)$ and by a slight abuse of notation, all its extensions on functional spaces in duality. That is, if $f$ and $µ$ are in $L^2(\mathbb{T}^d)$, then $\langle f,µ\rangle$ is their $L^2(\mathbb{T}^d)$ scalar product, but if $f \in \mathcal{C}^0(\mathbb{T}^d)$ and $µ \in \mptd$, then $\langle f,µ\rangle$ is the integral of $f$ against the measure $µ$ ; and if for instance $f \in \mathcal{C}^2(\mathbb{T}^d)$ and $µ \in \mptd$, then $\langle f, \Delta µ\rangle$ is the evaluation of the distribution of second order $\Delta µ$ on $f$.
\item For a function of two variables $f : (\mathbb{T}^d)^2 \to \mathbb{R}$, we define, whenever it makes sense
\begin{equation}
\langle µ | f(\cdot,\cdot)|\nu\rangle = \langle f_1, \nu\rangle,
\end{equation}
where $f_1 : y \to \langle f(\cdot,y),µ \rangle$. The important remark is that $µ$ is tested against the first argument of $f$ whereas $\nu$ is tested against its second argument.
\item An application $f : \mptd \to \mathcal{C}^0(\mathbb{T}^d)$ is said to be monotone if 
\begin{equation}
\forall µ,\nu \in \mptd, \langle f(µ)(\cdot) - f(\nu)(\cdot), µ - \nu \rangle \geq 0.
\end{equation}
\item For $n \in \mathbb{N}, \alpha \in [0,1)$ and a function $\phi : \mathbb{T}^d \to \mathbb{R}$ we denote by $\|\phi\|_{n+\alpha}$ its $\mathcal{C}^{n,\alpha}$ norm.
\item For $n \in \mathbb{N}, \alpha \in [0,1)$ and a function $\phi : \mathbb{T}^d\times \mathbb{T}^d \to \mathbb{R}$ we denote by $\|\phi\|_{(n+ \alpha,n+\alpha)}$ its $\mathcal{C}^{n,\alpha}$ norm.
\item We introduce the space $\mathcal{B}$ of functions $U : \mathbb{T}^d \times \mptd \to \mathbb{R}$ such that $U$ is globally continuous and
\begin{equation}
\sup_{m \in \mptd} \left\| U(\cdot,m)\right\|_{2} < \infty.
\end{equation}
We also note $\mathcal{B}^c$ the subspace of $\mathcal{B}$ of functions which are continuous seen as $\mptd \to \mathcal{C}^2$.
\item We introduce the space $\mathcal{B}_t$ of functions $U : [0,\infty) \times \mathbb{T}^d \times \mptd \to \mathbb{R}$ such that $U$ is globally continuous and for all $T > 0$ :
\begin{equation}
\sup_{m \in \mptd, t\in [0,T]} \left\| U(t,\cdot,m)\right\|_{2} < \infty.
\end{equation}
We also note $\mathcal{B}_t^c$ the subspace of $\mathcal{B}_t$ of functions which are continuous seen as $[0,\infty)\times \mptd \to \mathcal{C}^2$.
\item  We introduce the space $\mathcal{B}'_t$ of functions $U : [0,\infty) \times \mathbb{T}^d \times \mptd \times \mathbb{T}^k\to \mathbb{R}$ such that $U$ is globally continuous and for all $T > 0$ :
\begin{equation}
\sup_{m \in \mptd, t\in [0,T]} \left\| U(t,\cdot,m,\cdot)\right\|_{2} < \infty.
\end{equation}
\item The usual convolution product in $\mathbb{T}^d$ is denoted by $\star$.
\item The image measure of a measure $m$ by a map $T$ is denoted $T_{\#}m$.

\end{itemize}
\subsection{Derivatives in the space of probability measures}
Recall that $\mptd$ is equipped with $\textbf{d}_1$. We say that a function $U : \mptd \to \mathbb{R}$ is derivable at $m$ if there exists a continuous map $\phi : \mathbb{T}^d \to \mathbb{R}$ such that for all $ µ \in \mptd$
\begin{equation}\label{defder}
\lim_{\theta \to 0, \theta > 0} \frac{U((1 - \theta)m + \theta µ) - U(m)}{\theta} = \langle \phi, µ - m\rangle.
\end{equation}
Clearly, there is no uniqueness of such a function $\phi$ as it is defined up to a constant. We denote $\phi = \dmU(m)$ when it is such that $\langle \phi,m\rangle = 0$.\\

We say that $U$ is $\mathcal{C}^1$ if the map $m \to \dmU(m)$ is continuous.\\

 When $\dmU(m)$ is a $\mathcal{C}^1$ function of $\mathbb{T}^d$, we denote its gradient by $D_m U(m,x) := \nabla_x \dmU(m)(x)$. The function $D_mU$ is the intrinsic derivative of $U$ at $m$. It satisfies
 \begin{equation}\label{defintrinsic}
 \lim_{h \to 0}\frac{U((Id + h \phi)_{\#}m) - U(m)}{h} = \left \langle D_mU(m)\cdot\phi,m\right\rangle,
 \end{equation}
where $\phi:\mathbb{T}^d \to \mathbb{R}^d$ is a continuous function.\\

If it exists, the second order derivatives of $U : \mptd \to \mathbb{R}$ at $m$ is a function $\psi : (\mathbb{T}^d)^2 \to \mathbb{R}$ such that for any $x \in \mathbb{T}^d$, $\psi(x,\cdot ) = \frac{\delta}{\delta m} \left( \dmU(\cdot,x)\right)(\cdot)$. We denote by $\frac{\delta^2 U}{\delta m^2}$ the map $\psi$ such that for any $x \in \mathbb{T}^d, \left\langle \psi,m\right \rangle = 0$.\\

Let us finally introduce the following norm on functions $U : \mathbb{T}^d \times \mptd \to \mathbb{R}$ for $n \in \mathbb{N}$ and $\alpha \in [0,1)$
\begin{equation}
\text{Lip}_{n+\alpha}(U) = \sup_{µ,\nu \in \mptd} \frac{\|U(µ) - U(\nu)\|_{n+\alpha}}{\textbf{d}_1(µ,\nu)}.
\end{equation}
We define in the same way for $U : (\mathbb{T}^d)^2 \times \mptd \to \mathbb{R}$
\begin{equation}
\text{Lip}_{n+\alpha}(U) = \sup_{µ,\nu \in \mptd} \frac{\|U(µ) - U(\nu)\|_{(n+\alpha,n+\alpha)}}{\textbf{d}_1(µ,\nu)}.
\end{equation}

\subsection{First order conditions in $\mptd$}
Consider a $\mathcal{C}^{1}$ function $U : \mptd \to \mathbb{R}$ and $m_0 \in \mptd$ such that 
\begin{equation}\label{opt}
U(m_0) = \inf_{\mptd} U(m).
\end{equation}
One would like to have that $\dmU(m_0) = 0$, however this is not true in general. This is mainly due to the fact that , formally, $\mptd$ has many boundaries and that the optimality conditions associated to \eqref{opt} only yield an inequality in general. We can establish the following.
\begin{Prop}\label{firstcond}
Let $U$ be a $\mathcal{C}^{1}$ function on $\mptd$ which attains its minimum at $m_0$. Then $0 \leq\dmU(m_0)$ attains its minimum $0$ on $Supp(m_0)$. Moreover, if $\dmU(m_0)$ is $C^{1}$, then for any smooth $\phi : \mathbb{T}^d \to \mathbb{R}^d$
\begin{equation}\label{optdm}
\left \langle \phi \cdot D_m U(m_0),m_0\right \rangle = -\left \langle \dmU(m_0),\text{div}(\phi m_0)\right \rangle = 0,
\end{equation}
if $\dmU(m_0)$ is $C^2$, then 
\begin{equation}\label{optDelta}
\left \langle \dmU(m_0),\Delta m_0\right \rangle \geq  0,
\end{equation}
\end{Prop}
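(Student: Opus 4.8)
The plan is to extract everything from the directional-derivative characterization \eqref{defder}, applied along the linear interpolations $(1-\theta)m_0 + \theta\mu$, which remain in $\mptd$ for $\theta \in [0,1]$. First I would use that $m_0$ minimizes $U$ to note that the difference quotient
\begin{equation*}
\frac{U((1-\theta)m_0 + \theta\mu) - U(m_0)}{\theta} \geq 0
\end{equation*}
for every $\mu \in \mptd$ and every $\theta \in (0,1]$. Letting $\theta \to 0^+$ and invoking \eqref{defder} gives $\langle \dmU(m_0), \mu - m_0\rangle \geq 0$. Since the normalization $\langle \dmU(m_0), m_0\rangle = 0$ holds, this reads $\langle \dmU(m_0), \mu\rangle \geq 0$ for all $\mu \in \mptd$, and testing against Dirac masses $\mu = \delta_x$ yields $\dmU(m_0)(x) \geq 0$ for every $x \in \mathbb{T}^d$.

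Next I would upgrade this global inequality to the vanishing of $\dmU(m_0)$ on $Supp(m_0)$. Since $\dmU(m_0)$ is continuous and nonnegative while $\langle \dmU(m_0), m_0\rangle = 0$, any point $x_0 \in Supp(m_0)$ with $\dmU(m_0)(x_0) > 0$ would, by continuity, force $\dmU(m_0)$ to stay bounded below by a positive constant on a neighborhood of $x_0$ of positive $m_0$-measure, contradicting the vanishing of the integral. Hence $\dmU(m_0) = 0$ on $Supp(m_0)$, so its global minimum value $0$ is attained (at least) there, which is the first assertion.

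The first- and second-order identities then follow from the observation that every $x \in Supp(m_0)$ is a global, hence local and interior (since $\mathbb{T}^d$ has no boundary), minimizer of the smooth function $\dmU(m_0)$. When $\dmU(m_0)$ is $C^1$, the first-order condition gives $\nabla_x \dmU(m_0)(x) = D_m U(m_0,x) = 0$ for every $x \in Supp(m_0)$; integrating against $m_0$, whose mass sits on $Supp(m_0)$, yields $\langle D_m U(m_0), m_0\rangle = 0$. The identity with $-\langle \dmU(m_0), \nabla m_0\rangle$ is then nothing but the definition of the distributional gradient $\nabla m_0$ paired with the test function $\dmU(m_0)$, i.e.\ integration by parts on the torus. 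When $\dmU(m_0)$ is $C^2$, the second-order condition at each minimizer $x \in Supp(m_0)$ makes the Hessian of $\dmU(m_0)$ positive semidefinite, so in particular $\Delta \dmU(m_0)(x) \geq 0$ there; integrating this pointwise inequality against $m_0$ and integrating by parts gives $\langle \dmU(m_0), \Delta m_0\rangle = \langle \Delta \dmU(m_0), m_0\rangle \geq 0$.

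The argument is essentially routine once the directional derivative is in hand; the one step requiring care is the passage from the global bound $\dmU(m_0) \geq 0$ to pointwise vanishing on $Supp(m_0)$, where the normalization $\langle \dmU(m_0), m_0\rangle = 0$ and the continuity of $\dmU(m_0)$ are both indispensable. The integration-by-parts manipulations are harmless on $\mathbb{T}^d$ since there are no boundary terms.
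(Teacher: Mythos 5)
Your proof is correct and follows essentially the same route as the paper's: testing the directional derivative \eqref{defder} against Dirac masses to get $\dmU(m_0)\geq 0$, combining with the normalization $\langle \dmU(m_0),m_0\rangle=0$ to obtain vanishing on $Supp(m_0)$, and then reading off \eqref{optdm} and \eqref{optDelta} from the first- and second-order optimality conditions at interior minimizers on $\mathbb{T}^d$. The paper leaves these last steps as "following quite easily"; your write-up simply fills in those details (gradient vanishing, PSD Hessian, integration by parts against $m_0$) explicitly and correctly.
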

\begin{proof}
Let $x \in \mathbb{T}^d$ and, recalling the definition of $\dmU$ \eqref{defder} for $µ = \delta_x$, we obtain that $\dmU(m_0,x) \geq 0$. Because we have the normalization condition $\langle \dmU(m_0),m_0\rangle = 0$, we deduce that $U$ reaches its minimum $0$ on $Supp(m_0)$. The rest of the claim follows quite easily from the optimality conditions in $\mathbb{T}^d$.
\end{proof}

One could also provide general results for second order conditions in the spirit of what we just did. Such results are not presented because they are of no need in the following.

\section{Main model and preliminaries}\label{sec:model}

In this section we present the typical master equations we are going to study as well as the underlying MFG model. We also give the main assumptions for the rest of the paper and recall an existing result of uniqueness and a variation of a Lemma from Stegall.
\subsection{Mean Field Games and master equations}
We recall, on a well known example, the links between MFG and master equations. We assume that a crowd of non-atomic agents evolves in $\mathbb{T}^d$ during the time interval $[0,t_f]$. The state $(X_t)_{t \geq 0}$ of a player follows
\begin{equation}
dX_t = \alpha_t dt + \sqrt{2 \sigma}dW_t,
\end{equation}
where $(\alpha_t)_{t \geq 0}$ is the control of the player and $(W_t)_{t \geq 0}$ is a standard $d$ dimensional Brownian motion on $\mathbb{T}^d$ which models individual noise. By individual noise, we mean that two players' states are going to evolve according to the previous stochastic differential equation for two independent realizations of $(W_t)_{t \geq 0}$. We assume that the cost of a player whose state and control are given by $(X_t)_{t \geq 0}$ and $(\alpha_t)_{t \geq 0}$ is given by
\begin{equation}
\int_0^{t_f} L(X_t,\alpha_t) + f(X_t,m_t) dt + U_0(X_{t_f},m_{t_f}),
\end{equation}
where $L$, $f$ and $U_0$ are cost functions and $(m_t)_{t \geq 0}$ is the process which describes the evolution of the measure describing the repartition of the players in the state space. Hence $L$ represents the part of the cost the player pays which depends on its control, whereas $f$ is the part which depends on the other players. The function $U_0$ represents a final cost.
\begin{Rem}
In this paper we work only on the so-called decoupled case, in which the dependence on $\alpha$ and $m$ are separated in the previous equation. The extension of the following study to coupled cases is not necessary trivial since coupled hamiltonians which satisfy appropriate monotonicity assumptions tend to be singular in measure space.
\end{Rem}

Denoting by $U(t,x,m)$ the value of the game (which is not clearly defined at this moment) for a player in the state $x$ when it remains $t$ time in the game and the distribution of the other players in the state space is currently $m$, we obtain that $U$ solves (if it smooth enough) the so-called master equation 
\begin{equation}\label{me}
\begin{aligned}
\partial_t U& - \sigma \Delta U + H(x,\nabla_x U) - \left\langle \dmU(x,m,\cdot), \text{div}\left(D_pH(\cdot, \nabla_x U(\cdot,m))m\right)\right\rangle\\
& - \sigma\left\langle \dmU(x,m,\cdot), \Delta m \right\rangle = f(x,m), \text{ in } (0,\infty)\times \mathbb{T}^d\times \mptd\\
U(0,&x,m) = U_0(x,m) \text{ in }\mathbb{T}^d\times \mptd,
\end{aligned}
\end{equation}
where $H$ is the Hamiltonian of the players given by $H(x,p) := \sup_{\alpha}\{-\alpha\cdot p - L(x,\alpha)\}$.\\

In the present case, because the noise is only distributed in an i.i.d. fashion among the players, we can characterize Nash equilibria of the game which lasts a time $t_f$ and starts with an initial distribution of player given by $m_0$, using the following system of finite dimensional PDE
\begin{equation}\label{MFGsystem}
\begin{cases}
- \partial_t u - \sigma \Delta u + H(x, \nabla_x u) = f(x,m) \text{ in } (0,t_f)\times \mathbb{T}^d,\\
\partial_t m - \sigma \Delta m - \text{div}(D_pH(x, \nabla_x u)m) = 0 \text{ in } (0,t_f)\times \mathbb{T}^d,\\
u(t_f,x) = U_0(x, m(t_f)), m(0,x) = m_0(x) \text{ in } \mathbb{T}^d.
\end{cases}
\end{equation}
In the previous system, a solution $(u,m)$ is associated to a Nash equilibria of the game in the following way. The distribution of players $m$ evolves according to the second equation of \eqref{MFGsystem} and under the anticipation that the distribution of players is indeed going to be $m$, the value of the game for the players is given by $u$. A particular set of MFG are the one called monotone, i.e. for which the following assumption is satisfied.
\begin{hyp}\label{hypmon}
The Hamiltonian $H$ is convex in its second argument. The couplings $f$ and $U_0$ are monotone, i.e. they verify for all $µ,\nu \in \mptd$
\begin{equation}\label{monf}
\langle f(\cdot,µ) - f(\cdot,\nu), µ - \nu \rangle \geq 0,
\end{equation}
\begin{equation}\label{monu0}
\langle U_0(\cdot,µ) - U_0(\cdot,\nu), µ - \nu \rangle \geq 0.
\end{equation}
\end{hyp}
If the previous assumption is satisfied, and that for either \eqref{monf} or \eqref{monu0} the inequality is strict as soon as $µ \ne \nu$, then there exists at most one solution of \eqref{MFGsystem} for any initial condition $m_0$ and any duration of the game $t_f$. Hence we deduce from this strong uniqueness result for Nash equilibria of the MFG, that a concept of value of a game can be defined. By this we mean that we can indeed talk about the value $U(t,x,m)$ of the MGF for a player in the state $x$, when the time remaining in the game is $t$, and the repartition of players in the state space is described by $m$. In this context, the value $U$ obviously satisfies for all $t_f \geq 0, x \in \mathbb{T}^d, m_0 \in \mptd, U(t_f,x,m_0) = u(0,x)$, where $u$ is such that $(u,m)$ is the unique solution of \eqref{MFGsystem}. Clearly if $U$, defined in this way, is smooth, then it is a solution of \eqref{me}.\\

One of the main objectives of this paper is to generalize the previous approach to a situation in which the use of a system of characteristics such as \eqref{MFGsystem} is not clear, for instance in the presence of common noise (i.e. a noise which is not distributed in an i.i.d. fashion among the players). Mainly, we are going to establish that we can characterize, under Hypothesis \ref{hypmon}, a value function $U$ for the MFG as the sole weak solution of the master equation, without needing derivability of $U$ with respect to the measure argument.\\

Even though we do not detail the model underlying the following stationary counterpart of \eqref{me}, it could have been presented in the same manner. 
\begin{equation}\label{sme}
\begin{aligned}
rU &- \sigma \Delta U + H(x,\nabla_x U) - \left\langle \dmU(x,m,\cdot), \text{div}\left(D_pH(\cdot, \nabla_x U(\cdot,m))m\right)\right\rangle \\
&- \sigma\left\langle \dmU(x,m,\cdot), \Delta m \right\rangle = f(x,m) \text{ in }\mathbb{T}^d\times \mptd.
\end{aligned}
\end{equation}
This stationary master equation is also a subject of study for this paper.
\begin{Rem}
In the rest of the paper, the presence of the i.i.d. noise between the players plays a crucial role in our study. In the case in which such a noise is not present, let us mention what seems to be the most natural way to formulate the master equation. It is the so-called Hilbertian approach introduced by P.-L. Lions in \citep{lions2007cours}. In this context, the master equation is posed on an Hilbert space and the problem is closer to the finite dimensional setting introduced in \citep{bertucci2020monotone}.
\end{Rem}

\subsection{Preliminary results}
In this section we recall the two main results of existence and uniqueness on master equations which we can find in \citep{cardaliaguet2019master}, as well as a variant of a Lemma of Stegall on approximated optimization.\\

The following Theorem of existence of classical solutions is borrowed from \citep{cardaliaguet2019master}. We do not reproduce its rather long proof, but let us mention that it relies on a precise study of the system \eqref{MFGsystem} and its dependence on the initial conditions. In some sense, a contribution of this paper is to provide another existence result for master equations, which relies on weaker assumptions.
\begin{Theorem}\label{existclassic}
Assume that there exists $C > 0, \alpha \in (0,1)$ such that :
\begin{itemize}
\item The Hamiltonian $H$ satisfies 
\begin{equation}\label{hypH}
\forall x \in \mathbb{T}^d, p \in \mathbb{R}^d, 0 < D^2_{pp} H(x,p) \leq C Id,
\end{equation}
in the sense of symmetric matrices.
\item \begin{equation}
\sup_{m \in \mptd} \left(   \|f(\cdot,m)\|_{2 + \alpha} + \left\| \frac{\delta f(\cdot,m,\cdot)}{\delta m}  \right\|_{(2+ \alpha, 2 + \alpha)}  \right) + \text{Lip}_{2 +\alpha} \left( \frac{\delta f}{\delta m} \right) \leq C.
\end{equation}
\item \begin{equation}
\sup_{m \in \mptd} \left(   \|U_0(\cdot,m)\|_{3 + \alpha} + \left\| \frac{\delta U_0(\cdot,m,\cdot)}{\delta m}  \right\|_{(3+ \alpha, 3 + \alpha)}  \right) + \text{Lip}_{3 +\alpha} \left( \frac{\delta U_0}{\delta m} \right) \leq C.
\end{equation}
\end{itemize}
Then there exists a classical solution $U, \mathcal{C}^1$ in all the variables, $\mathcal{C}^2$ in the space variable $x$, of the master equation \eqref{me}.
\end{Theorem}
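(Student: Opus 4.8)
The plan is to construct $U$ from the finite-dimensional forward-backward system \eqref{MFGsystem} rather than to attack the infinite-dimensional equation \eqref{me} directly. Concretely, for each horizon $t_f > 0$ and each initial datum $m_0 \in \mptd$ I would first show that \eqref{MFGsystem} admits a unique solution $(u,m)$, and then \emph{define} $U(t_f,x,m_0) := u(0,x)$. The whole difficulty is then transferred to proving that this candidate is smooth enough — $\mathcal{C}^1$ in all variables and $\mathcal{C}^2$ in $x$ — and that it indeed solves \eqref{me}.

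The first step is the well-posedness of \eqref{MFGsystem} together with estimates that are uniform in $m_0$. Existence of a solution $(u,m)$ follows from a Schauder fixed-point argument on the map that sends a measure flow to the value function solving the Hamilton--Jacobi--Bellman equation and back to a new measure flow through the Fokker--Planck equation: each equation is uniquely solvable once the other unknown is frozen, and the a priori bounds furnished by \eqref{hypH} and by the regularity assumptions on $f$ and $U_0$ confine the iteration to a compact convex set. Uniqueness is where Hypothesis \ref{hypmon} enters: given two solutions $(u_1,m_1)$, $(u_2,m_2)$, one computes $\tfrac{d}{dt}\langle u_1-u_2, m_1-m_2\rangle$ and, after integrating by parts, the second-order and the transport terms combine into a sum of two nonnegative contributions — one coming from the convexity of $H$ (via the inequality $H(x,p_2) - H(x,p_1) - D_pH(x,p_1)\cdot(p_2-p_1) \geq 0$ and its symmetric counterpart) and one from the monotonicity of $f$ and $U_0$ in \eqref{monf}--\eqref{monu0}. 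This forces the two solutions to coincide. Parabolic Schauder estimates applied to each equation then give the spatial regularity of $u$, uniformly in $m_0$; the gap between the $\mathcal{C}^{2+\alpha}$ requirement on $f$ and the $\mathcal{C}^{3+\alpha}$ requirement on $U_0$ is exactly what is needed to compensate for the derivative lost through the coupling and to close the bootstrap.

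The main obstacle is the regularity of $U$ in the measure variable. I would obtain $\dmU$ by linearizing \eqref{MFGsystem} with respect to $m_0$: differentiating the system in the direction of a perturbation of the initial measure produces a \emph{linear} forward-backward system for the pair $(v,\mu) = (\tfrac{\delta u}{\delta m_0}, \tfrac{\delta m}{\delta m_0})$, whose coefficients depend on the already-constructed $(u,m)$. The crucial point is that this linearized system inherits a monotone structure from Hypothesis \ref{hypmon}, so the same energy computation as above yields its unique solvability together with stability estimates that are Lipschitz in $m_0$. Reading off $v(0,\cdot)$ gives $\dmU$, and the continuity of the solution map in $m_0$ upgrades this to $U \in \mathcal{C}^1$; differentiating once more in $x$ produces the intrinsic derivative $D_mU = \nabla_x \dmU$. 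Controlling the dependence of the linearized system on $m_0$ uniformly — and in particular propagating the Hölder regularity through the terminal coupling governed by $D_mU_0$ — is the technical heart of the argument.

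Finally, to verify that $U$ solves \eqref{me}, I would exploit the flow (dynamic-programming) property of the MFG system: by uniqueness, restarting \eqref{MFGsystem} at time $s$ from the measure $m_s$ shows that $u(s,x) = U(t_f-s, x, m_s)$ for every $s \in [0,t_f]$. Differentiating this identity in $s$, using the Fokker--Planck equation for $\partial_s m_s$ on the right-hand side and the Hamilton--Jacobi equation for $\partial_s u$ on the left, and invoking the chain rule $\tfrac{d}{ds}U(\cdot,m_s) = \langle \dmU, \partial_s m_s\rangle$, the two expressions match term by term and reproduce exactly \eqref{me}, with the terminal condition $U(0,\cdot,\cdot) = U_0$ built in by construction. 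The $\mathcal{C}^1$ regularity in $t$ needed to make this rigorous comes from the $t_f$-dependence estimates obtained alongside the well-posedness of the system.
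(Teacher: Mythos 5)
Your proposal is correct and takes essentially the same approach as the paper: the paper does not reproduce the argument but borrows Theorem \ref{existclassic} from \citep{cardaliaguet2019master}, noting only that the proof ``relies on a precise study of the system \eqref{MFGsystem} and its dependence on the initial conditions'' --- which is precisely your strategy of defining $U$ from the MFG system, obtaining $\dmU$ by linearizing that system with respect to the initial measure, and recovering \eqref{me} through the flow property.
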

The next result is concerned with uniqueness of solutions of master equations. The uniqueness of a smooth solution is a somehow classical (or at least expected) result on PDE. Indeed, uniqueness of smooth solutions of a non linear PDE can often be established without particular structural assumptions. Even if the next result is concerned with such a property, we present a proof which : i) is rarely given in this form, ii) is at the core of the definition of monotone solutions that we give in the next section, iii)relies somehow weakly on the regularity of the solutions.
\begin{Prop}\label{uniqsmooth}
Under Hypothesis \ref{hypmon} : i) there exists at most one smooth solution $U$ of \eqref{me}, moreover if it exists, $U(t)$ is monotone for all $t\geq 0$ ; ii) there exists at most one classical solution of \eqref{sme} and if it exists, it is monotone.
\end{Prop}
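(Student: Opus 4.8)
The plan is to run the method of characteristics linking the master equation to the finite-dimensional forward–backward system \eqref{MFGsystem}, and then to run the Lasry–Lions monotonicity computation on the resulting couples. First, given a smooth solution $U$ of \eqref{me} and an initial measure $m_0$, I would define the forward flow $(m_s)_{s\in[0,t_f]}$ as the solution of the Fokker–Planck equation $\partial_s m_s - \sigma \Delta m_s - \text{div}(D_pH(\cdot,\nabla_x U(t_f-s,\cdot,m_s))m_s) = 0$ with $m_0$ prescribed, and set $u(s,x):=U(t_f-s,x,m_s)$. Differentiating $u$ in $s$ via the definition \eqref{defder} and substituting \eqref{me}, the two terms $\langle \frac{\delta U}{\delta m},\text{div}(D_pH\,m)\rangle$ and $\sigma\langle \frac{\delta U}{\delta m},\Delta m\rangle$ cancel, and one checks that $(u,m)$ solves \eqref{MFGsystem} with terminal condition $u(t_f,\cdot)=U_0(\cdot,m_{t_f})$ coming from $U(0,\cdot,\cdot)=U_0$. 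Thus every smooth solution of \eqref{me} generates, for each $(t_f,m_0)$, a genuine solution of the MFG system.

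For the monotonicity of $U(t)$, I would fix a single solution $U$ and two initial data $m_0^1,m_0^2$, produce the two couples $(u^1,m^1),(u^2,m^2)$ as above, and study $\Phi(s):=\langle u^1(s)-u^2(s),m^1_s-m^2_s\rangle$. Differentiating and inserting the equations of \eqref{MFGsystem}, self-adjointness of $\Delta$ makes the two $\sigma$-terms cancel; the coupling contribution equals $-\langle f(\cdot,m^1_s)-f(\cdot,m^2_s),m^1_s-m^2_s\rangle\le 0$ by \eqref{monf}; and after integrating the drift term by parts, the Hamiltonian contribution splits into an $m^1_s$-weighted and an $m^2_s$-weighted integral, each nonpositive because convexity of $H$ gives $H(x,q)-H(x,p)\ge D_pH(x,p)\cdot(q-p)$. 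Hence $\Phi$ is nonincreasing, so $\Phi(0)\ge\Phi(t_f)$; since $\Phi(t_f)=\langle U_0(\cdot,m^1_{t_f})-U_0(\cdot,m^2_{t_f}),m^1_{t_f}-m^2_{t_f}\rangle\ge 0$ by \eqref{monu0}, the boundary value $\Phi(0)=\langle U(t_f,\cdot,m_0^1)-U(t_f,\cdot,m_0^2),m_0^1-m_0^2\rangle$ is nonnegative. As $t_f=t$, $m_0^1$, $m_0^2$ are arbitrary, $U(t)$ is monotone for all $t\ge 0$.

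Uniqueness follows from the same computation applied to two smooth solutions $U,V$ run from a common $m_0$: the couples $(u^U,m^U)$ and $(u^V,m^V)$ both solve \eqref{MFGsystem}, so $\Phi$ is again nonincreasing with $\Phi(0)=0$ (common initial measure) and $\Phi(t_f)\ge 0$. This forces $\Phi\equiv 0$ and saturates every inequality above; strict convexity of $H$, available through \eqref{hypH}, then yields $\nabla u^U=\nabla u^V$ along the supports, hence identical drifts, hence $m^U\equiv m^V$ by uniqueness of the Fokker–Planck flow, and finally $u^U\equiv u^V$; evaluating at $s=0$ gives $U(t_f,\cdot,m_0)=V(t_f,\cdot,m_0)$. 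The stationary claim (ii) is handled by the same scheme with the infinite-horizon discounted characteristics $u(s,x)=U(x,m_s)$: the extra term $r U$ in \eqref{sme} produces a factor $r\Phi$ in $\frac{d}{ds}\Phi$, so it is $e^{-rs}\Phi(s)$ that is monotone, and letting $s\to\infty$ with the boundedness of the data delivers both monotonicity and uniqueness.

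The main obstacle is the Hamiltonian term: one must combine the direct difference $\langle H(\cdot,\nabla u^1)-H(\cdot,\nabla u^2),m^1_s-m^2_s\rangle$ with the by-parts drift term so that the $m^1_s$- and $m^2_s$-weighted pieces reorganize into expressions controlled by the convexity inequality, and, for uniqueness, to upgrade the saturation of these inequalities into the pointwise identity $\nabla u^U=\nabla u^V$ via strict convexity before closing the loop on the flow. A secondary technical point is justifying the differentiations of $\Phi$ and the integrations by parts at the claimed regularity, which is exactly what smoothness of $U$ and the regularity of the characteristics provide.
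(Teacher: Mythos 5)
Your characteristics construction and the propagation-of-monotonicity argument are sound: a smooth solution $U$ of \eqref{me} does generate, for each $(t_f,m_0)$, a genuine solution of \eqref{MFGsystem} (the well-posedness of the McKean--Vlasov Fokker--Planck flow being guaranteed by the smoothness of $U$), and the Lasry--Lions computation on $\Phi(s)=\langle u^1(s)-u^2(s),m^1_s-m^2_s\rangle$ correctly yields that $U(t)$ is monotone under Hypothesis \ref{hypmon} alone. Note that this is a genuinely different route from the paper, which never leaves the space of measures: there, for two solutions $U,V$, one forms $W(\mu,\nu)=\langle U(\cdot,\mu)-V(\cdot,\nu),\mu-\nu\rangle$, evaluates both equations at a point of minimum of $W$ on the compact set $\mptd^2$ using the first-order conditions of Proposition \ref{firstcond}, and concludes $W\geq 0$; since $W$ vanishes on the diagonal, every diagonal point is a minimum, the first-order conditions there give that $U(\cdot,m)-V(\cdot,m)$ is constant in $x$, and plugging $U=V+c(m)$ back into the equation forces $c\equiv 0$.

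The gap is in your uniqueness step. You invoke ``strict convexity of $H$, available through \eqref{hypH}'' to upgrade the saturation $\Phi\equiv 0$ into $\nabla u^U=\nabla u^V$. But \eqref{hypH} is an assumption of the existence result (Theorem \ref{existclassic}), not of Proposition \ref{uniqsmooth}: Hypothesis \ref{hypmon} grants only convexity of $H$ and non-strict monotonicity of $f$ and $U_0$. Under these assumptions the saturation identities are vacuous: if, say, $H$ is affine in $p$, every convexity-gap term in $\frac{d\Phi}{ds}$ vanishes identically, and $\langle f(\cdot,m^1)-f(\cdot,m^2),m^1-m^2\rangle=0$ does not force $m^1=m^2$, so you cannot conclude that the two flows, hence the two solutions, coincide. (The paper itself stresses, right after \eqref{MFGsystem}, that uniqueness for the MFG system is asserted only when one of the monotonicity inequalities is strict; your scheme inherits exactly that limitation.) The paper's diagonal argument is designed precisely to avoid this: it extracts the conclusion from the global sign of $W$ and the first-order conditions along the diagonal, rather than from equality cases in the convexity and monotonicity inequalities. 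Two secondary points: even granting strict convexity, passing from ``$\nabla u^U=\nabla u^V$ on the supports'' to ``identical drifts'' uses the full support of $m_s$ for $s>0$ (true here since $\sigma>0$, but it should be said); and in the stationary case your saturation is more delicate than stated, since $\Phi(0)=0$ and the monotonicity of $e^{-rs}\Phi(s)$ only give $e^{-rs}\Phi\leq 0$, so one must integrate the differential inequality over $[0,\infty)$ to recover the vanishing of the dissipation terms before any strictness can be exploited.
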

\begin{proof}
We only detail the proof of the uniqueness property for the stationary equation, the time dependent being treated using similar arguments. Moreover, we prove more general results in the next section.\\

Let $U$ and $V$ be two smooth solutions of the master equation \eqref{sme}. Let us define the function $W$ on $\mathcal{P}(\mathbb{T}^d)^2$ by
\begin{equation}
W(µ,\nu) = \langle U(\cdot,µ) - V(\cdot, \nu), µ-\nu\rangle := \int_{\mathbb{T}^d} U(x,µ) - V(x,\nu) (µ- \nu)(dx).
\end{equation}
Using the equations satisfied by both $U$ and $V$, we deduce that $W$ satisfies on $\mptd^2$
\begin{equation}
\begin{aligned}
r&W + \langle H(x, \nabla_x U) - H(x,\nabla_x V),µ - \nu \rangle - \left \langle \frac{\delta W}{\delta µ}, \text{div}\left(D_pH(\cdot, \nabla U(\cdot,µ))µ\right)\right\rangle \\
-& \sigma \left \langle \frac{\delta W }{\delta µ}, \Delta µ\right \rangle - \sigma \left \langle \frac{\delta W }{\delta \nu}, \Delta \nu\right \rangle + \langle U - V, \text{div}\left(D_pH(\cdot, \nabla U(\cdot,µ))µ - D_pH(\cdot, \nabla V(\cdot,\nu))\nu\right) \rangle\\
-& \left \langle \frac{\delta W}{\delta \nu}, \text{div}\left(D_pH(\cdot, \nabla V(\cdot,\nu))\nu\right)\right\rangle = \langle f(\cdot,µ) - f(\cdot,\nu),µ- \nu\rangle.
\end{aligned}
\end{equation}
To establish the previous equation, we have used the relations (which are true up to a function $c : \mptd \to \mathbb{R}$)
\begin{equation}\label{reluw}
U(x,µ) - V(x,\nu) + \left\langle \dmU(\cdot,µ,x),µ - \nu\right\rangle = \frac{\delta W}{\delta µ}(µ,\nu,x)  \text{ for all } x \in \mathbb{T}^d, µ, \nu \in \mptd,
\end{equation}
\begin{equation}
V(x,\nu) - U(x,µ) + \left\langle \frac{\delta V}{\delta m}(\cdot,\nu,x),\nu - \mu\right\rangle = \frac{\delta W}{\delta \nu}(µ,\nu,x) \text{ for all } x \in \mathbb{T}^d, µ, \nu \in \mptd.
\end{equation}

The continuous function $W$ reaches its minimum at some point $(µ^*,\nu^*)$ at which the following holds
\begin{equation}\label{inter}
\begin{aligned}
r W(µ^*, \nu^*) &+ \langle H(\cdot,\nabla_x U) - H(x,\nabla_x V), µ^* - \nu^*\rangle - \langle \nabla_x (U- V) \cdot \nabla_x D_p H(\nabla_x U),µ^*\rangle\\
&- \langle \nabla_x (V- U) \cdot \nabla_x D_p H(\nabla_x V),\nu^*\rangle \geq \langle f(\cdot,µ^*) - f(\cdot,\nu^*),µ^* - \nu^*\rangle,
\end{aligned}
\end{equation}
where we have used the optimality conditions given by Proposition \ref{firstcond}. Hence we deduce from Hypothesis \ref{hypmon} that $rW(µ^*,\nu^*) \geq 0$ and thus that $W$ is a non-negative function. From the non-negativity of $W$, we obtain that for all $m \in \mptd, x \in \mathbb{T}^d$ : 
\begin{equation}
U(x,m) = V(x,m) + c(m)
\end{equation}
 for a function $c : \mptd \to \mathbb{R}$. Indeed otherwise $W$ should change sign around the diagonal of $\mathcal{P}(\mathbb{T})^2$. This is easily observed by evaluating, for $x \in \mathbb{T}^d$, $0< \epsilon < 1$
\begin{equation}
W(m,(1-\epsilon) m + \epsilon \delta_x) = \epsilon \langle U(\cdot,m) - V(\cdot,(1-\epsilon)m + \epsilon \delta_x), m - \delta_x\rangle.
\end{equation}
Indeed, the previous yields, by taking the limit $\epsilon \to 0$ and using the fact that $W \geq 0$,
\begin{equation}
c(m) := \langle U(\cdot,m)-V(\cdot,m),m\rangle \geq U(x,m) - V(x,m).
\end{equation}
Looking at $W((1-\epsilon) m + \epsilon \delta_x,m)$ gives the reverse inequality.\\

Evaluating \eqref{sme} for $U$ and $V$ immediately implies that $c(m) = 0$ on $\mptd$. Since $U = V$, we finally obtain that $U$ is monotone from the non-negativity of $W$.\\
\end{proof}

We end these preliminary results with the following variation of Stegall variational principle. Although this extension seems to be new, it is a rather immediate adaptation of existing results the interested could find in the monologue \citep{phelps} for instance.
\begin{Lemma}\label{Stegall}
Let $f : \mptd \to \mathbb{R}$ be a continuous function. Take $m \in \mathbb{N}, m > 6d$. For any $\epsilon > 0$, there exists $\phi$ in the Sobolev space $H^{m}(\mathbb{T}^d), \|\phi\|_{H^m} \leq \epsilon$ such that 
$µ \to f(µ) + \langle \phi,µ\rangle$ has a strict minimum on $\mptd$.
\end{Lemma}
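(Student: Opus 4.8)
The plan is to reduce the statement to the classical Stegall variational principle by realizing $\mptd$ as a compact subset of the Hilbert space $H^m(\mathbb{T}^d)$ itself, so that the linear perturbations $\mu \mapsto \langle \phi, \mu\rangle$ become honest continuous linear functionals on that Hilbert space. First I would record that, since $m > 6d > d/2$, the Sobolev embedding $H^m(\mathbb{T}^d) \hookrightarrow \mathcal{C}^0(\mathbb{T}^d)$ holds, so that for every $\phi \in H^m(\mathbb{T}^d)$ and every $\mu \in \mptd$ the quantity $\langle \phi, \mu\rangle = \int_{\mathbb{T}^d}\phi\, d\mu$ is well defined and $\mu \mapsto f(\mu) + \langle \phi, \mu\rangle$ is continuous on the compact metric space $(\mptd, \textbf{d}_1)$; in particular it attains its minimum for each $\phi$.

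The key construction is the following. Since $H^m(\mathbb{T}^d)$ with $m > d/2$ is a reproducing kernel Hilbert space, I denote by $K \in \mathcal{C}^0(\mathbb{T}^d\times\mathbb{T}^d)$ its kernel and define $J : \mmtd \to H^m(\mathbb{T}^d)$ by $J\mu = \int_{\mathbb{T}^d} K(\cdot, y)\, d\mu(y)$, so that $\langle J\mu, \phi\rangle_{H^m} = \langle \phi, \mu\rangle$ for all $\phi \in H^m(\mathbb{T}^d)$. Because the elements of $H^m(\mathbb{T}^d)$ separate measures, $J$ is injective on $\mptd$; and because $\|J\mu - J\nu\|_{H^m}^2 = \iint K(x,y)\, d(\mu-\nu)(x)\, d(\mu-\nu)(y)$ with $K$ continuous, a $\textbf{d}_1$-convergent sequence $\mu_n \to \mu$ (i.e.\ weakly convergent, since $\mathbb{T}^d$ is compact) makes the product measures $(\mu_n-\mu)\otimes(\mu_n-\mu)$ converge weakly to $0$ on $\mathbb{T}^d\times\mathbb{T}^d$, whence $J\mu_n \to J\mu$ in $H^m$-norm. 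Thus $J$ is a homeomorphism from $\mptd$ onto a compact set $C := J(\mptd) \subset H^m(\mathbb{T}^d)$, and $\tilde f := f \circ (J|_{\mptd})^{-1}$ is continuous, hence bounded, on $C$. By construction, minimizing $\mu \mapsto f(\mu) + \langle \phi, \mu\rangle$ over $\mptd$ is exactly the same as minimizing $v \mapsto \tilde f(v) + \langle v, \phi\rangle_{H^m}$ over $v \in C$, i.e.\ the minimization of a continuous function plus a linear functional over a closed bounded subset of a Hilbert space.

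At this point I would invoke Stegall's variational principle in the form valid on Banach spaces with the Radon--Nikodym property (as in \citep{phelps}): since $H^m(\mathbb{T}^d)$ is reflexive it has the Radon--Nikodym property, so the set of $\phi \in H^m(\mathbb{T}^d)$ for which $v \mapsto \tilde f(v) + \langle v, \phi\rangle_{H^m}$ attains a strong minimum on $C$ is a dense $G_\delta$ subset of $H^m(\mathbb{T}^d)$. Intersecting this dense $G_\delta$ with the open ball $\{\|\phi\|_{H^m} < \epsilon\}$ produces an admissible $\phi$, and since a strong minimum is attained at a unique point, transporting that minimizer back through the homeomorphism $J$ shows that $\mu \mapsto f(\mu) + \langle \phi, \mu\rangle$ has a strict minimum on $\mptd$.

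I expect the main obstacle to be the second step, namely verifying that the Riesz embedding $J$ is continuous from the $\textbf{d}_1$ topology on $\mptd$ into the norm topology of $H^m(\mathbb{T}^d)$; the bilinear computation above reduces this to the continuity of the reproducing kernel $K$ together with weak convergence of the product measures, but one must also take care to apply Stegall's theorem in its version for closed bounded \emph{non-convex} sets (alternatively, replace $C$ by its compact closed convex hull and check that the perturbed minimizer, being strongly exposed, still lies in $C$). The remaining points are routine soft analysis.
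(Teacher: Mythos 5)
Your proof is correct, but it takes a genuinely different route from the paper's. The paper proves the statement from scratch: it introduces the argmin multifunction $A(\phi) = \mathrm{argmin}\,\{f(\mu)+\langle\phi,\mu\rangle \mid \mu\in\mptd\}$, checks that $-A$ is cyclically monotone, builds from it (Rockafellar's construction) a proper convex continuous potential $\psi$ on $H^m(\mathbb{T}^d)$ with $-A\subset\partial\psi$, and concludes by the generic Fr\'echet differentiability of continuous convex functions on a separable Hilbert space: at a point of differentiability $\phi$ with $\|\phi\|_{H^m}\leq\epsilon$, the subdifferential, hence $A(\phi)$, is a singleton, i.e.\ the perturbed functional has a unique (strict) minimizer. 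You instead transport the problem into the Hilbert space itself through the kernel mean embedding $J\mu=\int_{\mathbb{T}^d}K(\cdot,y)\,d\mu(y)$ --- which is precisely the Riesz representative of the functional $\phi\mapsto\langle\phi,\mu\rangle$ that the paper's argument manipulates implicitly --- verify that $J$ is a homeomorphism of $(\mptd,\textbf{d}_1)$ onto a compact set $C\subset H^m(\mathbb{T}^d)$ (your bilinear computation with the continuous kernel is sound, since $m>d/2$), and then invoke Stegall's principle as a black box. Both are legitimate: the paper's argument is self-contained (its cyclic-monotonicity-plus-differentiability scheme is in essence a proof of a Stegall-type principle) and never has to discuss the geometry of the image of $\mptd$ in $H^m$, while yours is shorter given the literature and yields slightly more (a dense $G_\delta$ of admissible perturbations and a strong, not merely strict, minimum). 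The one point you must tighten is the one you flag yourself: the standard statement in \citep{phelps} assumes the closed bounded RNP set is \emph{convex}, and your $C$ is not. Your parenthetical fix does not work as written: the minimizer over $\overline{\mathrm{co}}(C)$ of a perturbation of a continuous extension of $\tilde f$ is not an exposed point of $\overline{\mathrm{co}}(C)$ in any obvious sense (Milman's theorem applies to extreme points, not to minimizers of nonlinear functions), so nothing forces it back into $C$. The clean fix is the indicator extension: apply the convex-set version of Stegall on $\overline{\mathrm{co}}(C)$ to the function equal to $\tilde f$ on $C$ and $+\infty$ on $\overline{\mathrm{co}}(C)\setminus C$, which is lower semicontinuous and bounded below; any minimizer of its perturbations automatically lies in $C$. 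Note that $\overline{\mathrm{co}}(C)$ is compact by Mazur's theorem, and compact convex sets have the RNP in any Banach space, so with this fix you do not even need to invoke reflexivity of $H^m(\mathbb{T}^d)$.
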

\begin{proof}
Let us consider the multivalued operator $A : H^{m}(\mathbb{T}^d) \substack{\rightarrow \\[-1em] \rightarrow} \mptd$ which is defined by $A(\phi) = \text{argmin} \{f(µ) + \langle \phi,µ\rangle | µ \in \mptd\}$. By construction $-A$ is cyclically monotone in the sense that, for finite sequences $\phi_0, \phi_1, ..., \phi_n = \phi_0$, $µ_i \in A(\phi_i)$, 
\begin{equation}
\sum_{i=1}^n \langle \phi_i - \phi_{i-1}, µ_i\rangle \leq 0.
\end{equation}
Indeed for such a sequence,
\begin{equation}
\begin{aligned}
\sum_{i=1}^n \langle \phi_i - \phi_{i-1}, µ_i\rangle &= \sum_{i=1}^n \langle \phi_i , µ_i - µ_{i+1}\rangle\\
& \leq \sum_{i =1 }^n f(µ_{i+1}) - f(µ_i)\\
&= 0.
\end{aligned}
\end{equation}
We can take $µ_0 \in A(0)$ and construct a function $\psi : H^m(\mathbb{T}^d) \to \mathbb{R}$ by setting
\begin{equation}
\psi(\phi) = \sup \{ \langle \phi - \phi_n, µ_n\rangle + \langle \phi_n - \phi_{n-1},µ_{n-1}\rangle + ... + \langle \phi_1,µ_0\rangle \},
\end{equation}
where the supremum is taken over all finite sequences satisfying $-µ_n \in A(\phi_n)$. The function $\psi$ is proper, convex and continuous over the separable Hilbert space $H^m(\mathbb{T}^d)$. Moreover, defining by $\partial \psi$ the sub-differential of $\psi$, $-A \subset \partial \psi$ by construction. Hence the result is proved since $\psi$ is Fr\'echet differentiable on a dense subset of the Hilbert space $H^m$ (since it is convex and continuous).
\end{proof}
\begin{Rem}
The result is stated for $m > 6d$ so that \color{black}$ H^m(\mathbb{T}^d)\subset \mathcal{C}^{2}(\mathbb{T}^d) $\color{black}. This point will be used directly later on in the paper as we shall directly consider $\mathcal{C}^2$ functions when needed.
\end{Rem}

\subsection{On the choice of writing the master equation in $\mptd$}
Before passing to the core Section of this paper, we take some time to comment the modeling choice we make to write the master equation on $\mptd$ instead of on $\{m \in \mmtd | m \geq 0\}=: \mathcal{M}_+(\mathbb{T}^d)$. Because, in the problem we are interested in, the number (or mass) of players stays constant, it is natural to consider the master equation only on $\mptd$, even if this situation is not the most general one. For instance, one can think about optimal stopping problem such as in \citep{bertucci2018optimal,bertucci2020monotone}. On the other hand, it is natural to define a value for the MFG whatever the total mass of players is. Of course in the situation of interest here, we can write the master equation on $\mathcal{M}_+(\mathbb{T}^d)$ and only the derivatives in the space of measures in directions which preserve the mass of the measure are needed. This previous remark makes the extension from $\mptd$ to $\mathcal{M}_+(\mathbb{T}^d)$ relatively easy. Moreover, working on the whole $\mathcal{M}_+(\mathbb{T}^d)$ \color{black} makes \color{black} easier to treat the question of uniqueness of solutions. For instance, recalling the proof of Proposition \ref{uniqsmooth} and its notations, the non-negativity of the function $W$ on $\mathcal{M}_+(\mathbb{T}^d)$ would have been sufficient to conclude. (This remark has higher implications later on in the paper.)

However, even though it seems more profitable to work on $\mathcal{M}_+(\mathbb{T}^d)$ than on $\mptd$, we prefer the second option as it allows us to use some existing results of the literature. We apologize for this inconvenience and hope that the interested reader shall be able to extend quite easily the results of this paper to the case of $\mathcal{M}_+(\mathbb{T}^d)$.

\section{Monotone solutions}\label{sec:mon}
In this section, we extend the notion of monotone solution introduced in \citep{bertucci2020monotone} to the equations \eqref{sme} and \eqref{me}. We shall not be concerned with the existence of such solutions here, as we delay this question to the next section. We start this section with the case of \eqref{sme} before treating \eqref{me}.

\subsection{The stationary case}
Even though we refer to \citep{bertucci2020monotone} for more details on why the notion of monotone solution is natural for MFG master equations, let us briefly recall the main idea behind this notion. 

The proof of Proposition \ref{uniqsmooth} clearly suggests that uniqueness of solutions can be obtained by looking at points of minimum of a function $W$ defined by $W = \langle U(µ)-V(\nu),µ - \nu\rangle$ for $U$ and $V$ two solutions. We then use the information that one has from the fact that $U$ and $V$ solve a master equation to proceed with the proof.

An important remark is that, at points of minimum of $W$, if $W$ is smooth, we have a relation to express some terms involving the derivatives of $U$ and $V$, uniquely through $U$ and $V$ (without derivatives). This is observed by combining \eqref{reluw} and Proposition \ref{firstcond}, or formally by taking $\frac{\delta W}{\delta µ} = 0$ in \eqref{reluw}. 

Hence, from the point of view of $U$, we only need information at points of minimum of $µ \to \langle U(µ) - V, µ - \nu\rangle$, for some function $V \in \mathcal{C}^2(\mathbb{T}^d)$ and measure $\nu \in \mptd$. But at these points of minimum, the terms involving the derivatives of $U$ with respect to $µ$ in \eqref{sme} can be expressed without using derivatives in the space of measures. This leads us to the following notion of solution of \eqref{sme}.
\begin{Def}\label{defstat}
A function $U \in \mathcal{B}$ is a monotone solution of \eqref{sme} if for any $\mathcal{C}^2$ function $\phi : \mathbb{T}^d \to \mathbb{R}$, for any measure $\nu \in \mmtd$, any point $m_0$ of strict minimum of $m \to \langle U(\cdot,m)- \phi, m- \nu \rangle$, the following holds
\begin{equation}
\begin{aligned}
r \langle U(\cdot,m_0), m_0 - \nu \rangle& +  \langle -\sigma \Delta U + H(\cdot,\nabla_x U), m_0 - \nu \rangle \geq \langle f(\cdot,m_0),m_0 - \nu\rangle\\
& -\langle U - \phi, \text{div}(D_pH(\nabla_x U)m_0)\rangle  - \sigma \langle \Delta(U- \phi),m_0\rangle.
\end{aligned}
\end{equation}
\end{Def}
\begin{Rem}
We only ask for information at points of strict minimum for stability reasons, mainly because a point of strict minimum can be approximated by points of strict minimum. Indeed in this context, thanks to Lemma \ref{Stegall}, given a point $m^*$ of strict minimum of a function $f : \mptd \to \mathbb{R}$, and a sequence $(f_n)_{n\geq 0}$ converging uniformly toward $f$, there always exists a sequence $(\tilde{f}_n)_{n \geq 0}$ such that for any $n \geq 0$, $\tilde{f}_n$ has a strict minimum $m_n$, $\tilde{f}_n$ is as closed as we want (in the uniform topology) of $f_n$ and $(m_n)_{n \geq 0}$ converges toward $m^*$.
\end{Rem}
This notion of monotone solution is reminiscent of the notion of viscosity solution introduced by Crandall and Lions in \citep{crandall1983viscosity}, although the equation \eqref{sme} does not have a proper comparison principle.

Let us also remark that Definition \ref{defstat} demands regularity in the space variable $x$. We shall not comment a lot on this except for the fact that, it seems possible to consider less regular functions in the space variable. In our particular framework, because of the presence of the i.i.d. noise, this regularity is not an issue.\\

The two following results justify in some sense the notion of solution we propose. The first one states that classical solutions are also monotone solutions, and the second one that there is uniqueness of a monotone solution in the monotone regime. 
\begin{Prop}
Assume that U is a classical solution of \eqref{sme}, then it is also a monotone solution of \eqref{sme}.
\end{Prop}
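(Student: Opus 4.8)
The plan is to verify Definition \ref{defstat} directly. Fix a test function $\phi \in \mathcal{C}^2(\mathbb{T}^d)$, a measure $\nu \in \mmtd$, and a point $m_0$ of (strict) minimum of the map $G(m) := \langle U(\cdot,m) - \phi, m - \nu\rangle$; the goal is to produce the claimed inequality. Since $U$ is a classical solution it is $\mathcal{C}^1$ in the measure variable with $\dmU$ regular in the space variable, so $G$ is a $\mathcal{C}^1$ function on $\mptd$ of which $m_0$ is a minimizer. I would first compute the flat derivative of $G$ by differentiating along $\theta \mapsto (1-\theta)m + \theta\mu$ as in \eqref{defder}, collecting the two contributions coming from the measure-dependence of $U$ and from the explicit factor $m-\nu$. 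After an application of Fubini this yields, up to an additive constant and exactly in the spirit of \eqref{reluw},
\[
w(y) := \frac{\delta G}{\delta m}(m_0,y) = \langle \dmU(\cdot,m_0,y), m_0 - \nu\rangle + U(y,m_0) - \phi(y).
\]

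The next step is to feed $m_0$ and $w$ into Proposition \ref{firstcond} applied to $G$. This furnishes three facts: $w \geq 0$ with $w = 0$ on $Supp(m_0)$; the first-order relation \eqref{optdm}; and, since the regularity of a classical solution makes $w$ of class $\mathcal{C}^2$, the second-order inequality \eqref{optDelta}, that is $\langle w, \Delta m_0\rangle \geq 0$. I would moreover record the pointwise strengthening that $\nabla w$ vanishes on $Supp(m_0)$: every point of the support is a global minimizer over the boundaryless manifold $\mathbb{T}^d$ of the $\mathcal{C}^1$ function $w$, hence a critical point.

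Finally I would evaluate the master equation \eqref{sme} at $(x,m_0)$ and pair it against $m_0 - \nu$. By Fubini the two coupling terms become $\langle g, \text{div}(D_pH(\cdot,\nabla U(\cdot,m_0))m_0)\rangle$ and $\sigma\langle g, \Delta m_0\rangle$, where $g(y) := \langle \dmU(\cdot,m_0,y), m_0-\nu\rangle = w(y) - (U(y,m_0)-\phi(y))$. Substituting $g = w - (U-\phi)$ is where the first-order conditions are used: integrating by parts, the $w$-part of the divergence term equals $-\langle \nabla w \cdot D_pH(\nabla U), m_0\rangle = 0$ because $\nabla w = 0$ on $Supp(m_0)$, while the $w$-part of the diffusion term is $\sigma\langle w, \Delta m_0\rangle \geq 0$ and can therefore be dropped to produce an inequality. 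The surviving $(U-\phi)$-contributions are precisely the test-function terms $-\langle U-\phi, \text{div}(D_pH(\nabla_x U)m_0)\rangle$ and $-\sigma\langle \Delta(U-\phi), m_0\rangle$ of Definition \ref{defstat} (using self-adjointness of $\Delta$ on the torus), and the undifferentiated terms $rU$, $\Delta U$, $H$ and $f$ contribute their pairings against $m_0-\nu$ unchanged, so that rearranging reproduces the inequality of Definition \ref{defstat}.

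The routine but delicate part is the bookkeeping: rigorously justifying the flat-derivative computation together with the Fubini and integration-by-parts manipulations, and checking that the regularity of a classical solution indeed makes $w$ of class $\mathcal{C}^2$ so that \eqref{optDelta} applies. The conceptual heart of the argument, and the reason the definition is the correct one, is that at a minimizer only two features of the measure derivatives of $U$ survive: the divergence coupling term drops out entirely and the diffusion coupling term carries a favourable sign, both being governed solely by the optimality conditions of Proposition \ref{firstcond}. I would also emphasize that only minimality of $m_0$, not its strictness, is needed for this implication.
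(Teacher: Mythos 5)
Your proof is correct and is essentially the argument the paper intends: the paper omits the details, saying only that the result "is a direct application of Proposition \ref{firstcond}", which is exactly what you carry out --- computing $w = \frac{\delta G}{\delta m}(m_0)$ as in \eqref{reluw}, using that $\nabla w$ vanishes pointwise on $Supp(m_0)$ (every support point being a global minimizer of $w$ on $\mathbb{T}^d$) to kill the divergence coupling term, and invoking \eqref{optDelta} to discard the diffusion coupling term with the favourable sign. Your remark that only minimality (not strictness) of $m_0$ is needed is also consistent with the paper, which requires strict minima in Definition \ref{defstat} purely for stability reasons.
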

\begin{proof}
Consider a smooth solution $U$ of \eqref{sme}, a $\mathcal{C}^2$ function $\phi : \mathbb{T}^d \to \mathbb{R}$, $\nu \in \mathcal{M}(\mathbb{T}^d)$ and $m_0 \in \mptd$ point of strict minimum of $m \to \langle U(\cdot,m)- \phi, m- \nu \rangle =: W(m)$.

Up to a constant of $x$, the following holds for any $m \in \mptd, x \in \mathbb{T}^d$
\begin{equation}
\frac{\delta W}{\delta m}(m,x) = U(x,m) - \phi(x) + \left \langle \frac{\delta U}{\delta m}(\cdot,m,x), m - \nu\right \rangle.
\end{equation}

On the other hand, integrating \eqref{sme} against $m - \nu$ leads to : for all $m \in \mptd$,
\begin{equation}
\begin{aligned}
\langle &rU - \sigma \Delta U + H(x,\nabla_x U), m - \nu \rangle - \sigma \left \langle m - \nu \left | \frac{\delta U}{\delta m}(\cdot,m,\cdot)\right| \Delta m\right\rangle \\
& - \left \langle m- \nu \left | \frac{\delta U}{\delta m}(\cdot,m,\cdot) \right| \text{div}(D_pH(\cdot,\nabla_x U(\cdot,m))m)\right\rangle = \langle f(\cdot,m),m- \nu\rangle.
\end{aligned}
\end{equation}
From this we obtain
\begin{equation}
\begin{aligned}
\langle &rU - \sigma \Delta U + H(x,\nabla_x U), m - \nu \rangle - \left \langle  \frac{\delta W}{\delta m}(m,\cdot) ,\text{div}(D_pH(\cdot,\nabla_x U(\cdot,m))m)\right\rangle -\\
&- \sigma \left \langle  \frac{\delta W}{\delta m}(m,\cdot), \Delta m\right\rangle  = \langle f(\cdot,m),m- \nu\rangle - \sigma \langle U(\cdot,m) - \phi,\Delta m\rangle-\\
&\qquad\qquad\qquad\qquad \qquad \quad-\langle U(\cdot,m) - \phi,\text{div}(D_pH(\cdot,\nabla_xU(\cdot,m))m)\rangle.
\end{aligned}
\end{equation}
Evaluating this expression at $m_0$ and using Proposition \ref{firstcond} yields the required result.
\end{proof}

We can prove the following.
\begin{Theorem}\label{uniqthmstat}
Under Hypothesis \ref{hypmon}, two monotone solutions of \eqref{sme} in the sense of Definition \ref{defstat} only differ by a function $c : \mptd \to \mathbb{R}$. If a monotone solution exists it is a monotone application. 
\end{Theorem}
\begin{proof}
Let us assume that there exists two such solutions $U$ and $V$. Let us define $W :\mptd^2 \to \mathbb{R}$ with 
\begin{equation}
W(µ,\nu) = \langle U(\cdot,µ) - V(\cdot, \nu), µ-\nu\rangle := \int_{\mathbb{T}^d} U(x,µ) - V(x,\nu) (µ- \nu)(dx).
\end{equation}
We want to show that $W$ is a non-negative function. Assume that this is not the case and that there exists $(µ_1,\nu_1)$ such that $W(µ_1,\nu_1)< 0$. From this we deduce that there exists $\epsilon > 0$ such that for all $\phi,\psi \in \mathcal{C}^0(\mathbb{T}^d)$, $\|\phi\|_0 + \| \psi\|_0 \leq \epsilon$
\begin{equation}
\inf_{(µ,\nu) \in \mathcal{P}(\mathbb{T}^d)^2}\left\{ W(µ,\nu) + \langle \phi,µ\rangle + \langle \psi, \nu\rangle\right\} < \frac{W(µ_1,\nu_1)}{2}< 0.
\end{equation}
On the other hand, from Lemma \ref{Stegall}, we deduce that there exist $\phi,\psi \in \mathcal{C}^2(\mathbb{T}^d)$, $\|\phi\|_2 + \| \psi\|_2 \leq \epsilon$ such that $(µ,\nu) \to W(µ,\nu) + \langle \phi,µ\rangle + \langle \psi, \nu\rangle$ has a strict minimum at $(µ_0,\nu_0)$ on $\mathcal{P}(\mathbb{T}^d)^2$. Using the definition of monotone solutions for $U$, we deduce that
\begin{equation}
\begin{aligned}
r \langle U(\cdot,µ_0), µ_0 - \nu_0 \rangle& + \langle - \sigma \Delta U + H(\cdot,\nabla_x U), µ_0 - \nu_0 \rangle \geq \langle f(\cdot,µ_0),µ_0 - \nu_0\rangle\\
& -\langle U(µ_0) - V(\nu_0) + \phi, \text{div}(D_pH(\nabla_x U)µ_0)\rangle  - \sigma \langle \Delta(U- V + \phi),µ_0\rangle,
\end{aligned}
\end{equation}
and the corresponding relation for $V$ :
\begin{equation}
\begin{aligned}
r \langle V(\cdot,\nu_0), \nu_0 - \mu_0 \rangle& + \langle - \sigma \Delta V + H(\cdot,\nabla_x V), \nu_0 - \mu_0 \rangle \geq \langle f(\cdot,\nu_0),\nu_0 - \mu_0\rangle\\
& -\langle  V(\nu_0) - U(\mu_0) + \psi, \text{div}(D_pH(\nabla_x V)\nu_0)\rangle  - \sigma \langle \Delta(V- U + \psi),\nu_0\rangle.
\end{aligned}
\end{equation}
Combining the two previous relations, using the convexity of $H$ and the monotonicity of $f$ we deduce that
\begin{equation}
rW(µ_0,\nu_0) \geq- \langle  \phi, \text{div}(D_pH(\nabla_x U)µ_0)\rangle - \sigma \langle \Delta\psi,\nu_0\rangle - \langle \psi, \text{div}(D_pH(\nabla_x V)\nu_0)\rangle - \sigma \langle \Delta \phi,\mu_0\rangle,
\end{equation}
which is a contradiction because $\phi$ and $\psi$ can be chosen arbitrary small. Hence, we obtain that $W \geq 0$. This establishes the Theorem.\\

\end{proof}
\begin{Rem}
At this point, it can be disappointing that the uniqueness of monotone solutions can only be obtained up to this $c: \mptd \to \R$ function. In Section \ref{sec:extensionuniqueness}, we explain how this difficulty can be overcome by making additional assumptions or by changing slightly the setting of the master equation. Moreover, in some sense, this weak uniqueness result is sufficient to obtain (formally) the uniqueness of a Nash equilibria. Indeed the optimal strategy, given at the equilibrium by the solution $U$ of the master equation, only depends on the gradient in the spatial variable $x\in \mathbb{T}^d$ of $U$. Hence the addition of a function $c : \mptd \to \mathbb{R}$ does not alter the induced strategies.
\end{Rem}

We now give a result of stability of monotone solutions.
\begin{Prop}\label{stabs}
Assume that there exist sequences $(H_n)_{n \geq 0}$ and $(f_n)_{n \geq 0}$ in respectively $\mathcal{C}\color{black}^1\color{black}(\mathbb{T}^d \times \mathbb{R}^d, \mathbb{R})$ and $\mathcal{C}(\mathbb{T}^d \times \mptd, \mathbb{R})$ which converge locally uniformly (in those spaces) toward respectively $H$ and $f$. Assume that there is a sequence $(U_n)_{n \geq 0}$ of monotone solutions of \eqref{sme} (where $U_n$ is the solution associated with $H_n$ and $f_n$). Assume that $(U_n)_{\geq 0}$ converges locally uniformly toward some function $U \in \mathcal{B}^c$ (for the natural topology of $\mathcal{B}$), then $U$ is a monotone solution of \eqref{sme} associated with $H$ and $f$.
\end{Prop}
\begin{proof}
Let us consider $\phi \in \mathcal{C}^2$, $\nu \in \mmtd$ and $\mu_*$ a point of strict minimum of $m \to \langle U(\cdot,m) - \phi, m - \nu\rangle$ on $\mptd$. From Lemma \ref{Stegall}, we can consider a sequence of functions $(\phi_n)_{n \geq 0}$ such that $\|\phi_n\|_2 \to 0$ as $n \to \infty$ and $m \to \langle U_n(\cdot,m) - \phi + \phi_n, m - \nu\rangle$ admits a strict minimum at $µ_n$ on $\mptd$. Because $U_n$ is a monotone solution of \eqref{sme}, we obtain that 
\begin{equation}\label{interstabs1}
\begin{aligned}
r \langle U_n(\cdot,µ_n), µ_n - \nu \rangle& +  \langle -\sigma  \Delta U_n + H_n(\cdot,\nabla_x U_n), µ_n - \nu \rangle \geq \langle f_n(\cdot,µ_n),µ_n - \nu\rangle\\
& -\langle U_n - \phi + \phi_n, \text{div}(D_pH_n(\nabla_x U_n)µ_n)\rangle  - \sigma \langle \Delta(U_n- \phi + \phi_n),µ_n\rangle.
\end{aligned}
\end{equation}
Since $(µ_n)_{n \geq 0}$ is a compact sequence, extracting a subsequence if necessary, it converges toward a measure $\tilde{µ}$. By construction of $(µ_n)_{n \geq 0}$ and convergence of $(U_n)_{n \geq 0}$ toward $U$, we deduce that for any $m \in \mptd$,
\begin{equation}
\langle U(\cdot,\tilde{µ}) - \phi, \tilde{µ} - \nu\rangle \leq \langle U(\cdot,m) - \phi, m - \nu\rangle.
\end{equation}
From which we obtain $\tilde{µ} = µ_*$ (and the convergence of the whole sequence $(µ_n)_{n \geq 0}$). It now remains to pass to the limit $n \to \infty$ in \eqref{interstabs}. Remark that the terms in $\langle \Delta U_n(µ_n),µ_n\rangle$ cancel each other and that since $(U_n)_{n \geq 0}$ converges toward $U$ in $\mathcal{B}$, then $\|\Delta(U_n - U)\|_\infty \to 0$ as $n \to \infty$, thus, we can actually pass to the limit in all the terms in \eqref{interstabs1}, thanks to $U \in \mathcal{B}^c$, to obtain 
\begin{equation}
\begin{aligned}
r \langle U(\cdot,µ_*), µ_* - \nu \rangle& + \langle - \sigma\Delta U + H(\cdot,\nabla_x U), µ_* - \nu \rangle \geq \langle f(\cdot,µ_*),µ_* - \nu\rangle\\
&- \langle U_* - \phi , \text{div}(D_pH(\nabla_x U)µ_*)\rangle  - \sigma \langle \Delta(U- \phi),µ_*\rangle.
\end{aligned}
\end{equation}
Hence $U$ is a monotone solution of \eqref{sme} (associated with $H$ and $f$).
\end{proof}
\begin{Rem}
Remark that in this result, $U$ is asked to be slightly more regular than in the definition of a monotone solution.
\end{Rem}

We now give a result concerning smooth monotone solutions.
\begin{Prop}
Assume that $U$ is a $\mathcal{C}^1$ monotone solution such that $\dmU(x,m,y)$ is globally Lipschitz continuous, and its second order derivatives with respect to $y$ are continuous in $x,m$. Assume also that there exists $c >0$ such that
\begin{equation}\label{hyp552}
\langle U(m)- U(m'), m-m'\rangle \geq c (\textbf{d}_1(m,m'))^2.
\end{equation}
Then $U$ is a classical solution of \eqref{sme}.
\end{Prop}

\begin{proof}
Let us fix $\bar{m} \in \mptd$. If we choose $\nu$ sufficiently close to $\bar{m}$ and define $V$ by
\begin{equation}
V(x) = U(x,\bar{m}) + \left \langle \dmU(\cdot,\bar{m},x), \bar{m}- \nu\right \rangle,
\end{equation}
then $W : µ \to \langle U(µ) - V, µ - \nu\rangle$ has a strict minimum at $µ = \bar{m}$. 
Indeed, we can compute
\be
\begin{aligned}
W(µ) - W(\bar{m}) &= \langle U(µ) - V, µ - \nu\rangle + \left\langle \bar{m} - \nu\bigg| \dmU(\cdot,\bar{m},\cdot)\bigg| \bar{m}-\nu\right \rangle\\
& = \langle U(µ) - U(\bar{m}), µ - \bar{m} \rangle + \langle U(\mu) - U(\bar{m}), \bar{m}-\nu\rangle - \left\langle µ - \nu\bigg| \dmU(\cdot,\bar{m},\cdot)\bigg| \bar{m}-\nu\right \rangle\\
& \quad \quad + \left\langle \bar{m} - \nu\bigg| \dmU(\cdot,\bar{m},\cdot)\bigg| \bar{m}-\nu\right \rangle\\
&\geq c \textbf{d}_1^2(µ,\bar{m})+ \langle U(\mu) - U(\bar{m}), \bar{m}-\nu\rangle - \left\langle µ - \bar{m}\bigg| \dmU(\cdot,\bar{m},\cdot)\bigg| \bar{m}-\nu\right \rangle\\
& \quad \quad - \left\langle \bar{m} - \nu\bigg| \dmU(\cdot,\bar{m},\cdot)\bigg|\nu - \bar{m}\right \rangle+\left\langle \bar{m} - \nu\bigg| \dmU(\cdot,\bar{m},\cdot)\bigg|\bar{m} - \nu\right \rangle\\
\end{aligned}
\ee
The last two terms are non-negative since $U$ is monotone. Moreover, from the Lipschitz continuity assumption on $U$, we obtain finally
\be
W(µ) - W(\bar{m}) \geq  c \textbf{d}_1^2(µ,\bar{m}) - C\textbf{d}_1(\bar{m},\nu)\textbf{d}_1^2(µ,\bar{m}).
\ee
Hence the fact that $\bar{m}$ is a point of strict minimum of $W$ since $\nu$ can be chosen arbitrary close to $\bar{m}$. Since $U$ is a monotone solution, we then arrive at the relation 

\begin{equation}
\begin{aligned}
r \langle U(\cdot,\bar{m}), \bar{m} - \nu \rangle& +  \langle -\sigma \Delta U + H(\cdot,\nabla_x U), \bar{m} - \nu \rangle \geq \langle f(\cdot,\bar{m}),\bar{m} - \nu\rangle\\
& -\left\langle \bar{m}- \nu\bigg| \dmU(\cdot,\bar{m},\cdot)\bigg|\sigma \Delta\bar{m} + \text{div}(D_pH(\nabla_x U)\bar{m})\right\rangle
\end{aligned}
\end{equation}
which holds for any $\bar{m} \in \mptd, \nu \in \mptd, \textbf{d}_1(\nu,\bar{m}) < \epsilon$ for a given $\epsilon > 0$. In particular, for $\bar{m}$ with a smooth density, everywhere positive, we obtain easily that \eqref{sme} is satisfied everywhere. We finally deduce that \eqref{sme} is satisfied everywhere by a density argument, since, from the smoothness assumption we made on $U$, the equation is continuous in $m$.
\end{proof}

\begin{Rem}
Let us insist on the fact that, in practical situations, if we are given a smooth function which is also a monotone solution of \eqref{sme}, it is often the case that there is additional information on that function so that other proofs are available which avoids the use of \eqref{hyp552} which can seem somehow artificial.
\end{Rem}

\subsection{Toward the obtention of a more precise uniqueness result}\label{sec:extensionuniqueness}
Since monotone solutions were built on a proof of uniqueness, it is disappointing that Theorem \ref{uniqthmstat} does not give the full equality $U = V$. In our opinion, this is mainly due to our decision of writing the master equation on $\mptd$ instead of on $\mathcal{M}_1(\T^d):=\{m \in \mmtd, m \geq 0, \int m \leq 1\}$. We indicate in this Section how we can overcome this issue in two different settings.

\subsubsection{The case of a stronger monotonicity} Assume that, instead of  Hypothesis \ref{hypmon}, the coupling $f$ satisfies the stronger requirement 
\be\label{hypmon+}
\forall µ,\nu \in \mptd, \langle f(µ)(\cdot) - f(\nu)(\cdot),µ - \nu\rangle \leq 0 \Rightarrow µ = \nu.
\ee
Then, we can prove the following.
\begin{Theorem}
Assume that Hypothesis \ref{hypmon} holds as well as \eqref{hypmon+}. Then, there exists at most one monotone solution $U$ of \eqref{sme} such that $U$ is continuous, seen as a function $ \mptd \to \mathcal{C}^1$.
\end{Theorem}
\begin{proof}
Consider two such solutions $U$ and $V$. Following the proof of Theorem \ref{uniqthmstat}, we already know that $\nabla_x U = \nabla_x V$, and in particular that for all $µ,\nu \in \mptd$, $\langle U(µ)-V(\nu),µ- \nu\geq 0.$ Denote now by $\rho$ the Lebesgue measure on $\T^d$. Consider $W$ defined on $\mptd^2$ by
\be
W(µ,\nu) = \langle U(\cdot,µ) - V(\cdot,\nu),µ- \nu + \epsilon \rho\rangle,
\ee
for $\epsilon >0$. Assume that $U \ne V$, then there exists $\bar{m} \in \mptd$ such that $W(\bar{m},\bar{m}) \ne 0$. Hence, up to exchanging $U$ and $V$, there exists $\kappa > 0$ such that
\be\label{eq590}
\inf_{µ,\nu} W(µ,\nu) \leq -\kappa \epsilon.
\ee
Hence, using Stegall's Lemma, there $\phi,\psi \in \mathcal{C}^2$, arbitrary small, such that $µ, \nu \to W(µ,\nu) + \langle \phi,µ\rangle + \langle \psi,\nu\rangle$ has a strict minimum at $(µ_\epsilon,\nu_\epsilon)$. Since $U$ and $V$ are monotone solutions of \eqref{sme}, we obtain that
\be
\begin{aligned}
\langle r U(\cdot,µ_\eps)& - \sigma \Delta U(µ_\eps) + H(\cdot,\nabla_x U(µ_\eps)), µ_\eps - \nu_\eps + \eps \rho \rangle \geq \langle f(\cdot,µ_\eps),µ_\eps - \nu_\eps + \eps \rho \rangle\\
& -\langle U(µ_\eps) - V(\nu_\eps) + \phi , \text{div}(D_pH(\nabla_x U)µ_\eps)\rangle  - \sigma \langle \Delta(U- V + \phi),µ_\eps\rangle,
\end{aligned}
\ee
and
\be
\begin{aligned}
\langle r V(\cdot,\nu_\eps)& - \sigma \Delta V(\nu_\eps) + H(\cdot,\nabla_x V(\nu_\eps)), \nu_\eps - \mu_\eps - \eps \rho \rangle \geq \langle f(\cdot,\nu_\eps),\nu_\eps - \mu_\eps - \eps \rho \rangle\\
& -\langle V(\nu_\eps) - U(\mu_\eps) + \psi , \text{div}(D_pH(\nabla_x V)\nu_\eps)\rangle  - \sigma \langle \Delta(V- U + \psi),\nu_\eps\rangle.
\end{aligned}
\ee
Summing the two previous relations, and using the convexity of $H$ yields
\begin{equation}\label{intuniq2}
\begin{aligned}
rW&(µ_{\epsilon},\nu_{\epsilon}) + \epsilon\sigma \langle U(µ_\eps) - V(\nu_\eps), \Delta \rho\rangle + \langle H(\nabla_x U(µ_\eps)) - H(\nabla_x V(\nu_\eps)),\eps \rho\rangle\\
  &\geq\langle f(µ_{\epsilon}) - f(\nu_{\epsilon}),µ_{\epsilon}-\nu_{\epsilon} +\epsilon \rho\rangle- \langle  \phi_{\epsilon}, \text{div}(D_pH(\nabla_x U)µ_{\epsilon})\rangle - \sigma \langle \Delta\psi_{\epsilon},\nu_{\epsilon}\rangle&\\
 &\quad - \langle \psi_{\epsilon}, \text{div}(D_pH(\nabla_x V)\nu_{\epsilon})\rangle - \sigma \langle \Delta \phi_{\epsilon},\mu_{\epsilon}\rangle.
\end{aligned}
\end{equation}

Remark that the second term of the left side vanishes since $\Delta \rho = 0$. Consider now $(µ_0,\nu_0)$ such that, up to a subsequence, $(µ_\eps,\nu_\eps)_{\eps > 0}$ converges toward $(µ_0,\nu_0)$ (recall that $\mptd$ is compact). Passing to the limit $\epsilon \to 0$ in \eqref{intuniq2} implies that, since $f$ is continuous,
\be
\langle f(µ_0)-f(\nu_0),µ_0-\nu_0\rangle \leq r\langle U(µ_0)-V(\nu_0),µ_0- \nu_0\rangle \leq 0.
\ee
Hence, using \eqref{hypmon+}, we obtain that $µ_0 = \nu_0$. Hence, we can rewrite \eqref{intuniq2} in 
\be
r W(µ_\eps,\nu_\eps) \geq \epsilon o(\epsilon)
\ee
in the limit $\epsilon \to 0$. From this we contradict \eqref{eq590}. Hence the required result.
\end{proof}
In the previous proof, the additional requirement on $f$ can be too restrictive in several situations, whereas the additional assumptions on the regularity of $U$ is somehow standard, see for instance Proposition \ref{holderU}. In any case, we now give what we believe to be the natural way to address the question of the full uniqueness, if the previous raises difficulties in some situations.

\subsubsection{Writing the equation on $\mone$}
We detail what happens to the uniqueness proof when the master equation \eqref{sme} is written on $\mone$ instead of on $\mptd$. We do not reproduce the associated definition of monotone solution in this case, but we use exactly the same, except that we replace each time $\mptd$ by $\mone$. 

Reproducing the proof of Theorem \ref{uniqthmstat} in the same spirit by replacing $\mptd$ by $\mone$, we arrive at the conclusion that for all $µ, \nu \in \mone$, for any two monotone solutions $U$ and $V$, it holds that $\langle U(µ) - V(\nu),µ-\nu\rangle$. Hence, considering any $µ \in \mone$ such that $\int µ \in (0,1)$ we obtain that there exists $\eps > 0$ such that for any $\nu \in \mptd$,
\be
\epsilon \langle U(µ + \epsilon \nu) - V(µ), \nu \rangle \geq 0.
\ee
Dividing by $\eps$ and using the continuity of $U$, we obtain that for any $\nu \in \mptd, \langle U(µ) - V(µ),\nu\rangle \geq 0$. Thus we deduce that $U(µ) = V(µ)$ by symmetry. Hence we have proven the following.
\begin{Theorem}
Assume that $f$ is defined and monotone on $\mone$. Then, if $H$ is convex in $p$, there is at most one monotone solution of \eqref{sme} on $\mone$.
\end{Theorem}
The only restrictive point of this approach is that it is not obvious that we can extend $f$ to the whole $\mone$, even if in most of the practical applications, it seems to be the case. Moreover, we could have considered a smaller set than $\mone$ by looking at $\{m \in \mmtd, m \geq 0, \int m \in [a,b]\}$ for any $a < b$ such that $1 \in [a,b]$.

As we already mentioned above, we did not work in this setting for all the paper to avoid having to adapt the results of the existing literature, as they are almost all set on $\mptd$.

\subsection{The time dependent case}
Let us now introduce the definition of monotone solution in the time dependent setting. The approach is extremely similar except for the fact that, because we do not want to ask for time regularity outside of continuity for a solution $U$, we use techniques of viscosity solutions to treat the time derivative.
\begin{Def}\label{deft}
A function $U \in \mathcal{B}_t$ is a monotone solution of \eqref{me}  if \begin{itemize} \item for any $\mathcal{C}^2$ function $\phi : \mathbb{T}^d \to \mathbb{R}$, for any measure $\nu \in \mmtd$, for any smooth function $\vartheta : [0,\infty) \to \mathbb{R}$, $T > 0$ and any point $(t_0,m_0) \in (0,T]\times\mptd$ of strict minimum of $(t,m) \to \langle U(t,\cdot,m)- \phi, m- \nu \rangle - \vartheta(t)$ on $[0,T]\times \mptd$ the following holds
\begin{equation}
\begin{aligned}
\frac{d \vartheta}{dt}(t_0)& +  \langle -\sigma \Delta U + H(\cdot,\nabla_x U), m_0 - \nu \rangle \geq \langle f(\cdot,m_0),m_0 - \nu\rangle\\
&- \langle U - \phi, \text{div}(D_pH(\nabla_x U)m_0)\rangle  - \sigma \langle \Delta(U- \phi),m_0\rangle.
\end{aligned}
\end{equation}
\item the initial condition holds
\begin{equation}
U(0,\cdot,\cdot) = U_0(\cdot,\cdot).
\end{equation}
\end{itemize}
\end{Def}
As we did in the stationary case, we now present results of consistency and uniqueness of such solutions. The consistency result is a straightforward extension of the analogue on the stationary equation. Moreover, let us recall that we postpone the question of existence to the next section.
\begin{Prop}
Assume that U is a smooth solution of \eqref{me}, then it is also a monotone solution of \eqref{me}.
\end{Prop}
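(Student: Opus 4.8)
The plan is to verify directly, from the fact that $U$ solves \eqref{me} in the classical sense, the two requirements of Definition \ref{deft}. The initial condition is immediate, since a classical solution satisfies $U(0,\cdot,\cdot)=U_0$ by definition, so the entire content is to produce the differential inequality at an arbitrary point $(t_0,m_0)$ of strict minimum of $G:(t,m)\mapsto \langle U(t,\cdot,m)-\phi,m-\nu\rangle-\vartheta(t)$, for fixed test data $\phi\in\mathcal{C}^2$, $\nu\in\mmtd$ and smooth $\vartheta$.

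First I would handle the time variable. Since $t_0\in(0,\infty)$ is interior and both $U$ and $\vartheta$ are smooth in $t$, freezing $m=m_0$ shows that $t\mapsto G(t,m_0)$ has an interior minimum at $t_0$, whence $\frac{d\vartheta}{dt}(t_0)=\langle \partial_t U(t_0,\cdot,m_0),m_0-\nu\rangle$. This is the only place the test function $\vartheta$ intervenes, and it is what would let one avoid asking for time-regularity in the definition; here, because $U$ is smooth, it is a plain first-order condition.

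Next, freezing $t=t_0$, the measure $m_0$ minimizes $\Phi:=\langle U(t_0,\cdot,m)-\phi,m-\nu\rangle$ over $\mptd$. As in the proof of Proposition \ref{uniqsmooth} (relation \eqref{reluw} with the constant-in-$m$ "second solution" $\phi$), its flat derivative is $\frac{\delta\Phi}{\delta m}(m_0,x)=U(x,m_0)-\phi(x)+\langle \dmU(\cdot,m_0,x),m_0-\nu\rangle$ up to a constant. I would then test equation \eqref{me}, evaluated at $(t_0,\cdot,m_0)$, against $(m_0-\nu)(dx)$ and convert the two measure-derivative terms. For the first-order transport term, swapping the order of integration turns $\langle \dmU(x,m_0,\cdot),\text{div}(D_pH(\cdot,\nabla U)m_0)\rangle$ tested against $m_0-\nu$ into $\langle \frac{\delta\Phi}{\delta m}(m_0)-(U-\phi),\text{div}(D_pH(\cdot,\nabla U)m_0)\rangle$; applying the transport first-order condition \eqref{optdm} (that is, \eqref{defintrinsic} with the vector field $D_pH(\cdot,\nabla U)$, which is admissible since pushforwards keep $m_0$ in $\mptd$) annihilates the $\frac{\delta\Phi}{\delta m}(m_0)$ contribution and leaves exactly $-\langle U-\phi,\text{div}(D_pH(\nabla_x U)m_0)\rangle$. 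For the second-order term the same manipulation produces $\langle \frac{\delta\Phi}{\delta m}(m_0),\Delta m_0\rangle-\langle\Delta(U-\phi),m_0\rangle$, and here the condition \eqref{optDelta}, $\langle \frac{\delta\Phi}{\delta m}(m_0),\Delta m_0\rangle\geq 0$, is precisely what downgrades the equality to the required one-sided inequality.

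Substituting $\frac{d\vartheta}{dt}(t_0)$ for $\langle\partial_t U,m_0-\nu\rangle$ and collecting these identities into the integrated equation then yields the inequality of Definition \ref{deft}, the local terms $\langle\sigma\Delta U+H(\cdot,\nabla_x U),m_0-\nu\rangle$ being unchanged under testing and carrying over directly. I expect no genuine obstacle: the argument is the pointwise, single-solution specialization of the computation in Proposition \ref{uniqsmooth}, and mirrors the stationary consistency statement, itself a direct consequence of Proposition \ref{firstcond}. The only points requiring care are the bookkeeping of signs in the integrations by parts and checking that the regularity of a classical solution ($\mathcal{C}^2$ in $x$, $\mathcal{C}^1$ in $m$ with $\dmU$ smooth enough in its state argument) suffices for $\frac{\delta\Phi}{\delta m}(m_0)$ to be $\mathcal{C}^2$, so that both \eqref{optdm} and \eqref{optDelta} genuinely apply.
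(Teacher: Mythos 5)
Your proof is correct and follows exactly the route the paper has in mind: the paper omits this proof, describing its stationary analogue as ``a direct application of Proposition \ref{firstcond}'', and that is precisely what you carry out --- the flat derivative of $m\mapsto\langle U(t_0,\cdot,m)-\phi,m-\nu\rangle$ computed as in \eqref{reluw}, the transport term killed via \eqref{defintrinsic}/\eqref{optdm}, the Laplacian term downgraded to an inequality via \eqref{optDelta}, and the time derivative handled by the interior first-order condition. Your closing remarks on the admissibility of signed pushforwards and on the $\mathcal{C}^2$ regularity of the flat derivative address the only genuine points of care, so nothing is missing.
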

\begin{Theorem}\label{uniqtime}
Under Hypothesis \ref{hypmon}, two monotone solutions of \eqref{me} in the sense of Definition \ref{deft} only differ by a function $c : [0,\infty)\times\mptd \to \mathbb{R}$. If a monotone solution $U$ exists, $U(t)$ is a monotone application for all $t \geq 0$.
\end{Theorem}
\begin{Rem}
The same kind of developments as in Section \ref{sec:extensionuniqueness} can be done to obtain additional uniqueness results in this time dependent case. We do not present them as they are straightforward extensions.
\end{Rem}
\begin{proof}
Let us consider $U$ and $V$ two such solutions. We define $W$ by
\begin{equation}
W(t,s,µ,\nu) = \langle U(t,\cdot,µ) - V(s,\cdot, \nu), µ-\nu\rangle := \int_{\mathbb{T}^d} U(t,x,µ) - V(s,x,\nu) (µ- \nu)(dx).
\end{equation}
We want to prove that $W(t,t,µ,\nu) \geq 0$ for all $t \geq 0$, $µ,\nu \in \mptd$. Assume it is not the case, hence there exists $t_*,\delta, \bar{\epsilon} > 0$, such that for all $\epsilon \in(0,\bar{\epsilon}), \alpha > 0, \phi, \psi \in C^2$ such that $\|\phi\|_2 + \| \psi \|_2 \leq \epsilon$ and $\gamma_1,\gamma_2 \in (\frac{\bar{\epsilon}}{2}, \bar{\epsilon})$, 
\begin{equation}\label{infuniqt}
\inf_{t,s \in [0,t_*], µ,\nu \in \mptd} \left\{ W(t,s,µ,\nu) + \langle \phi, µ\rangle + \langle \psi, \nu \rangle + \frac{1}{2 \alpha}(t-s)^2 + \gamma_1 t + \gamma_2 s \right\} \leq - \delta.
\end{equation}
From Lemma \ref{Stegall}, we know that there exists (for any value of $\alpha$) $\phi, \psi, \gamma_1$ and $\gamma_2$ such that $(t,s,µ,\nu) \to W(t,s,µ,\nu) + \langle \phi, µ\rangle + \langle \psi, \nu \rangle + \frac{1}{2 \alpha}(t-s)^2 + \gamma_1 t + \gamma_2 s$ has a strict minimum on $[0,t_*]^2 \times \mptd^2$ at $(t_0,s_0,\mu_0,s_0)$.\\

We assume first that $t_0 > 0$ and $s_0 > 0$. Using the fact that $U$ is a monotone solution of \eqref{me} we obtain that
\begin{equation}
\begin{aligned}
-\gamma_1 - \frac{t_0 - s_0}{\alpha}& +  \langle - \sigma \Delta U(µ_0) + H(\cdot,\nabla_x U), µ_0 - \nu_0 \rangle \geq \langle f(\cdot,µ_0),µ_0 - \nu_0\rangle\\
& -\langle U(t_0,µ_0) - V(s_0,\nu_0) + \phi, \text{div}(D_pH(\nabla_x U)µ_0)\rangle\\
&  - \sigma \langle \Delta(U(t_0,µ_0)- V(s_0,\nu_0) + \phi),µ_0\rangle,
\end{aligned}
\end{equation}
and similarly for $V$ :
\begin{equation}
\begin{aligned}
-\gamma_2 - \frac{s_0 - t_0}{\alpha}& +  \langle -\sigma\Delta V(s_0,\nu_0) + H(\cdot,\nabla_x V), \nu_0 - \mu_0 \rangle \geq \langle f(\cdot,\nu_0),\nu_0 - \mu_0\rangle\\
& -\langle V(s_0,\nu_0) - U(t_0,\mu_0) + \phi, \text{div}(D_pH(\nabla_x V)\nu_0)\rangle\\
&  - \sigma \langle \Delta(V(s_0,\nu_0)- U(t_0,\mu_0) + \phi),\nu_0\rangle.
\end{aligned}
\end{equation}
Summing the two previous relations, using the monotonicity of $f$ and the convexity of $H$, we deduce that
\begin{equation}
-\gamma_1 - \gamma_2 \geq -\langle  \phi, \text{div}(D_pH(\nabla_x U)µ_0)\rangle - \sigma \langle \Delta \psi,\nu_0\rangle - \langle \psi, \text{div}(D_pH(\nabla_x V)\nu_0)\rangle - \sigma \langle \Delta \phi,\mu_0\rangle.
\end{equation}
The previous relation is a contradiction (provided that $\epsilon$ had been chosen sufficiently small compared to $\bar{\epsilon}$).\\

Let us now turn to the case $t_0 = 0$ (the case $s_0 = 0$ being treated in exactly the same fashion). By construction $s_0$ satisfies $|s_0-t_0|\leq C \sqrt{\alpha}$ for some $C > 0$ independent of $\epsilon$. Thus choosing $\alpha > 0$ sufficiently small, we easily manage to contradict \eqref{infuniqt}. Hence we have proven that $W(t,t,µ,\nu) \geq 0$ for $t\geq 0, µ,\nu \in \mptd$, which itself proves the claim.

\end{proof}

We now give a result of stability of monotone solutions.
\begin{Prop}\label{stabtime}
Assume that there exist sequences $(H_n)_{n \geq 0}$ and $(f_n)_{n \geq 0}$ in respectively $\mathcal{C}\color{black}^1\color{black}(\mathbb{T}^d \times \mathbb{R}^d, \mathbb{R})$ and $\mathcal{C}(\mathbb{T}^d \times \mptd, \mathbb{R})$ which converge locally uniformly (in those spaces) toward respectively $H$ and $f$. Assume that there is a sequence $(U_n)_{n \geq 0}$ of monotone solutions of \eqref{me} (where $U_n$ is the solution associated with $H_n$ and $f_n$). Assume that $(U_n)_{\geq 0}$ converges locally uniformly toward some function $U\in \mathcal{B}_t^c$ (for the natural topology of $\mathcal{B}_t$), then $U$ is a monotone solution of \eqref{me} associated with $H$ and $f$.
\end{Prop}
\begin{proof}
Let us consider $T > 0,\phi \in \mathcal{C}^2$, $\nu \in \mmtd$ and a smooth function $\vartheta : \mathbb{R} \to \mathbb{R}$. Consider also $(t_*,\mu_*)$ a point of strict minimum of $(t,m) \to \langle U(\cdot,m) - \phi, m - \nu\rangle - \vartheta(t)$ on $[0,T]\times \mptd$. From Lemma \ref{Stegall}, we can consider a sequence of functions $(\phi_n)_{n \geq 0}$ and of real numbers $(\delta_n)_{n \geq 0}$ such that $\|\phi_n\|_2 +\delta_n \to 0$ as $n \to \infty$ and $(t,m) \to \langle U_n(\cdot,m) - \phi + \phi_n, m - \nu\rangle - \vartheta(t)- \delta_n t$ admits a strict minimum at $(t_n,µ_n)$ on $[0,T]\times\mptd$. Because $U_n$ is a monotone solution of \eqref{me}, we obtain that 
\begin{equation}\label{interstabs}
\begin{aligned}
\frac{d \vartheta}{dt}(t_n) + \delta_n& + \langle -\sigma \Delta U_n + H_n(\cdot,\nabla_x U_n), µ_n - \nu \rangle \geq \langle f_n(\cdot,µ_n),µ_n - \nu\rangle\\
& -\langle U_n - \phi + \phi_n, \text{div}(D_pH_n(\nabla_x U_n)µ_n)\rangle  - \sigma \langle \Delta(U_n- \phi + \phi_n),µ_n\rangle.
\end{aligned}
\end{equation}
Following the same arguments as in the proof of Proposition \ref{stabs} we obtain first that $(t_n,µ_n) \to (t_*,µ_*)$ and then that
\begin{equation}
\begin{aligned}
\frac{d \vartheta}{dt}(t_*)& + \langle -\sigma\Delta U + H(\cdot,\nabla_x U), µ_* - \nu \rangle \geq \langle f(\cdot,µ_*),µ_* - \nu\rangle\\
&- \langle U_* - \phi , \text{div}(D_pH(\nabla_x U)µ_*)\rangle  - \sigma \langle \Delta(U- \phi),µ_*\rangle.
\end{aligned}
\end{equation}
Hence $U$ is a monotone solution of \eqref{me}.
\end{proof}

\section{Existence of monotone solutions}\label{sec:exist}
In this section, we establish the existence of a monotone solution of \eqref{me}, in cases for which the assumptions of Theorem \ref{existclassic} are not satisfied. We first prove an estimate for classical solutions of \eqref{me} and then use a stability result to prove our existence result. In our opinion, this section plays an important (pedagogical) role by explaining why the stability of monotone solutions is crucial to obtain existence results. In particular we do so on an example which we believe to be natural and instructive. Our aim here is not to weaken optimally the regularity assumptions needed. Even if the following results are clear improvements of the existing literature, it seems that even more can be done in this direction. We can prove the following.
\begin{Prop}\label{holderU}
Assume that $U$ is a classical solution of \eqref{me} and that there exists $C > 0, \alpha,\beta \in (0,1)$ such that
\begin{itemize}
\item \begin{equation}
\sup_{ µ, \nu \in \mptd} \frac{\|f(µ)- f(\nu)\|_{\color{black}1 + \color{black}\alpha}}{\textbf{d}_1(µ,\nu)^{\beta}} + \frac{\|U_0(µ)- U_0(\nu)\|_{\color{black}2 \color{black}+ \alpha}}{\textbf{d}_1(µ,\nu)^{\beta}} \leq C.
\end{equation}
\item $H$ satisfies \eqref{hypH} with the same C.
\end{itemize}
Then there exists $C' >0$ depending only on $C, \alpha$ and $\beta$ such that
\begin{equation}\label{esthold}
|U(t,x,m) - U(t',x',m')| \leq C' (|t-t'|^{\frac{\gamma}{2}} + |x - x'| + \textbf{d}_1(m,m')^{\gamma}),
\end{equation}
where $\gamma = (2(\beta^{-1} - \frac{1}{2}))^{-1} \in (0,1)$.
\color{black}Moreover, 
\begin{equation}\label{eq781}
\|U(t,\cdot,m) - U(s,\cdot,m')\|_{2 + \alpha} \leq C' (|t-t'|^{\frac{\gamma}{2}} + \textbf{d}_1(m,m')^{\gamma})
\end{equation}
\end{Prop}
The following proof is similar to the one of a result in \citep{cardaliaguet2019master} which establishes the global Lipschitz regularity of $U$.
\begin{proof}
Lets us take $t\geq 0$, $µ_1,µ_2 \in \mptd$. We define for $i \in \{1;2\}$, $(u_i,m_i) \in \mathcal{C}^{1,2,\alpha}\times \mathcal{C}([0,T], \mptd)$ the unique solution (\citep{lasry2007mean}) of 
\begin{equation}
\begin{cases}
-\partial_t u_i - \sigma \Delta u_i + H(x,\nabla u_i) = f(x,m_i) \text{ in } [0,t]\times \mathbb{T}^d,\\
\partial_t m_i - \sigma \Delta m_i - \text{div}(D_p H(x,\nabla u_i)m_i) = 0 \text{ in } [0,t]\times \mathbb{T}^d,\\
m_i(0) = µ_i, u_i(t) = U_0(m_i(t)), \text{ in } \mathbb{T}^d.
\end{cases}
\end{equation}
Since $U$ is a classical solution of \eqref{me}, it follows that $U$ satisfies 
\begin{equation}
\forall x \in \mathbb{T}^d, U(t,x,µ_1) = u_1(0,x).
\end{equation}
From this and the regularity assumptions on $f$, $H$ and $U_0$, we deduce that there exists $C > 0$, such that for all $s \geq 0, µ \in \mptd$, $\|U(s,\cdot,µ)\|_{2+ \alpha} \leq C$, from which we deduce the estimate in the space variable in \eqref{esthold}.\\

We now come back to the H\"older estimate in the measure argument. Let us compute 
\begin{equation}
\begin{aligned}
0 \leq& \int_0^t \int_{\mathbb{T}^d} \left[ - \partial_t(u_1 - u_2) - \sigma \Delta (u_1 - u_2) + H(x,\nabla u_1) - H(x,\nabla u_2)\right] d(m_1(s) - m_2(s))ds,\\
=&  \int_0^t \int_{\mathbb{T}^d} (H(x,\nabla u_1) - H(x,\nabla u_2) + D_p H(x,\nabla u_1)\cdot \nabla (u_1 - u_2))dm_1ds\\
& + \int_0^t \int_{\mathbb{T}^d} (H(x,\nabla u_2) - H(x,\nabla u_1) + D_p H(x,\nabla u_2)\cdot \nabla (u_2 - u_1))dm_2ds\\
& - \int_{\mathbb{T}^d}(U_0(m_1(t)) - U_0(m_2(t)))d(m_1(t) - m_2(t)) + \int_{\mathbb{T}^d}(u_1(0) - u_2(0))d(µ_1 - µ_2).
\end{aligned}
\end{equation}
Here, we have used the monotonicity of $f$ for the inequality. Using the convexity of $H$ and the monotonicity of $U_0$, we obtain
\begin{equation}\label{stepone}
\int_0^t \int_{\mathbb{T}^d} |\nabla (u_1 - u_2)|^2d(m_1(s) + m_2(s))ds \leq C\int_{\mathbb{T}^d}(u_1(0) - u_2(0))d(µ_1 - µ_2) \leq C d_1(µ_1, µ_2).
\end{equation}
Let us remark that since $\nabla_x u_i$ is indeed uniformly bounded, we can use a strict-like convexity of $H$ to obtain the previous inequality. The estimate \eqref{stepone} is extremely helpful to establish the next estimate on the trajectories $m_1$ and $m_2$ that we now provide using a coupling argument. Let $X_1$ and $X_2$ be two random variables of law $µ_1$ and $µ_2$ such that $\mathbb{E}[|X_1 - X_2|] = d_1(µ_1,µ_2)$. Let us define $(X_{i,s})_{s \geq 0}$ for $i \in \{1;2\}$ the strong solutions of 
\begin{equation}
\begin{cases}
dX_{i,s} = -D_p H(X_{i,s},\nabla u_i(X_{i,s}))ds + \sqrt{2 \sigma} dB_s,\\
X_{i,0} = X_i,
\end{cases}
\end{equation}
for $(B_s)_{s \geq 0}$ a standard Brownian motion. We now compute using It\^{o}'s Lemma
\begin{equation}
\begin{aligned}
\mathbb{E}[|X_{1,s} - X_{2,s}|] \leq \mathbb{E}[|X_1 - X_2|] + \mathbb{E}\left[ \int_0^s |D_p H(X_{1,r}, \nabla u_1(X_{1,r})) - D_p H(X_{2,r}, \nabla u_1(X_{2,r}))|dr \right]\\
+\mathbb{E}\left[ \int_0^s |D_p H(X_{2,r}, \nabla u_1(X_{2,r})) - D_p H(X_{2,r}, \nabla u_2(X_{2,r}))|dr \right].
\end{aligned}
\end{equation}
We now deduce, using the Lipschitz continuity of $D_p H$ and the Lipschitz continuity of $\nabla u_1$ for the second term, and \eqref{stepone} for the third term, that
\begin{equation}
\mathbb{E}[|X_{1,s}-X_{2,s}|] \leq \mathbb{E}[|X_1 - X_2|] + C\int_0^s\mathbb{E}[|X_{1,r} - X_{2,r}|]dr + C \left( \int_{\mathbb{T}^d}(u_1(0) - u_2(0))d(µ_1 - µ_2)\right)^{\frac{1}{2}}.
\end{equation}
From which we deduce using Gronwall's Lemma that
\begin{equation}\label{steptwo}
\sup_{s \in [t,T]}d_1(m_1(s),m_2(s)) \leq C\left(d_1(µ_1,µ_2) +  \left( \int_{\mathbb{T}^d}(u_1(0) - u_2(0))d(µ_1 - µ_2)\right)^{\frac{1}{2}}\right).
\end{equation}
Let us now remark that, using Lemma 3.2.2 in \citep{cardaliaguet2019master}, we deduce
\begin{equation}
\sup_{s \in [0,t]}\|(u_1-u_2)(s)\|_{\color{black}2\color{black} + \alpha} \leq C\left(\sup_{s \in [0,t]}\|f(m_1(s))- f(m_2(s))\|_{\color{black} 1 + \color{black}\alpha} + \|U_0(m_1(t))- U_0(m_2(t))\|_{\color{black} 2 \color{black}+\alpha}\right).
\end{equation}
Hence we deduce, using \eqref{steptwo} and the assumptions on $f$ and $U_0$ that, that there exists $C > 0$ such that
\begin{equation}
\sup_{s \in [0,t]}\|(u_1-u_2)(s)\|_{\color{black}2\color{black} + \alpha} \leq C\left(\textbf{d}_1(µ_1,µ_2) + \|(u_1-u_2)(0)\|_{\color{black}2\color{black} + \alpha}^{\frac{1}{2}}\textbf{d}_1(µ_1,µ_2)^{\frac{1}{2}}\right)^{\beta}.
\end{equation}
We now easily obtain that \color{black}
\begin{equation}\label{halfd1}
\|U(t,\cdot,µ_1) - U(t,\cdot,µ_2)\|_{2 + \alpha} = \|u_1(0,\cdot) - u_2(0,\cdot)\|_{2 + \alpha} \leq C d_1(µ_1,µ_2)^{\gamma},
\end{equation}
\color{black}for $\gamma= (2(\beta^{-1} - \frac{1}{2}))^{-1} \in (0,1)$.
Let us now recall that in view of the Lipschitz continuity of $D_p H$, we have the classical estimate for the solution of the Fokker-Planck equation :
\begin{equation}
\forall s,s' \in [t,T], d_1(m_1(s),m_1(s')) \leq C \sqrt{|s- s'|},
\end{equation}
where $C$ is a constant independent of $µ_1$ and $t \geq 0$. Moreover, the following relation holds
\begin{equation}
\forall s \in [t,T], U(s,x,m_1(s)) = u_1(s,x).
\end{equation}
Recalling \eqref{steptwo}, we finally obtain that there exists $C > 0$ such that for any $t,s \in [0,T], x \in \mathbb{T}^d, µ \in \mptd$ \color{black}
\begin{equation}
\|U(t,\cdot,m) - U(s,\cdot,m)\|_{2 + \alpha} \leq C |t -s |^{\frac{\gamma}{2}},
\end{equation}
\color{black}which concludes the proof.
\end{proof}
\begin{Rem}
The extension of this result to value function\color{black}s \color{black}  being defined on $\mathcal{M}_+(\mathbb{T}^d) := \{m \in \mmtd| m \geq 0\}$ is straightforward when equipping the previous convex set with the metric $\tilde{\textbf{d}}_1(µ,\nu) := \sup \langle \phi, µ - \nu\rangle$ where the supremum is taken over Lipschitz functions on $\mathbb{T}^d$ whose Lipschitz constant is at most $1$ and which verify $\phi(0) = 0$.
\end{Rem}
\begin{Rem}\label{extensionlip}
Let us remark that, following exactly the proof of Proposition 3.2 in \citep{cardaliaguet2019master},  assuming 
\begin{equation}
Lip_{\alpha} (f) + Lip_{1+\alpha}(U_0) \leq C,
\end{equation}
instead of the H\"older continuity estimates in the previous proposition, one arrives at the conclusion that for some $C' > 0$ and any $(t,x,m),(t',x',m') \in [0,\infty)\times \mathbb{T}^d \times \mptd$
\begin{equation}
|U(t,x,m) - U(t',x',m')| \leq C' (|t-t'|^{\frac{1}{2}} + |x - x'| + \textbf{d}_1(m,m')).
\end{equation}
\end{Rem}
Having established this a priori estimate, we are now in position to prove the existence of monotone solutions which are not necessary classical solutions.
\begin{Theorem}\label{existmon}
Assume that Hypothesis \ref{hypmon} holds and that
\begin{itemize}
\item The Hamiltonian $H$ satisfies \eqref{hypH}.
\item $f$ and $U_0$ satisfy the assumptions of Proposition \ref{holderU} for some $\beta > 0$ and $f$ and $U_0$ are in the closure (with respect to the uniform convergence) of the set of couplings $f$ and $U_0$ satisfying the assumptions of Theorem \ref{existclassic}.
\end{itemize} Then, there exists a (unique) monotone solution of the master equation \eqref{me} in the sense of Definition \ref{deft}. 
\end{Theorem}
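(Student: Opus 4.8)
The plan is to prove existence by approximation, combining the a priori estimate of Proposition \ref{holderU}, the existence of classical solutions from Theorem \ref{existclassic}, and the stability result of Proposition \ref{stabtime}. Uniqueness is already furnished by Theorem \ref{uniqtime} under Hypotheses \ref{hypmon} and \ref{hypmon+}, so the genuine work lies entirely in producing one monotone solution under the weakened regularity assumptions, where Theorem \ref{existclassic} does not directly apply.

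\medskip

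First I would use the hypothesis that $f$ and $U_0$ lie in the uniform closure of couplings satisfying the assumptions of Theorem \ref{existclassic}: this gives sequences $(f_n)_{n \geq 0}$ and $(U_{0,n})_{n \geq 0}$ converging uniformly to $f$ and $U_0$, each pair satisfying the strong regularity of Theorem \ref{existclassic}. I would keep the same Hamiltonian $H$ throughout (it already satisfies \eqref{hypH}), so in the notation of Proposition \ref{stabtime} one may take $H_n = H$. One delicate point to address here is that the approximating couplings should be chosen to also satisfy Hypothesis \ref{hypmon} (monotonicity), so that each approximate problem remains in the monotone regime; I would invoke a standard regularization (for instance convolution in the measure variable, or mollification preserving monotonicity) to arrange this, noting that monotonicity is preserved under the natural smoothing operations. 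For each $n$, Theorem \ref{existclassic} then yields a classical solution $U_n$ of the master equation \eqref{me} associated with $(H, f_n, U_{0,n})$, and by the consistency statement each $U_n$ is in particular a monotone solution of its own equation.

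\medskip

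Next I would obtain compactness of the sequence $(U_n)_{n \geq 0}$. The key observation is that the regularity assumptions of Proposition \ref{holderU} are preserved along the approximating sequence with a uniform constant $C$ (since uniform convergence of $f_n, U_{0,n}$ together with the quantitative bounds guarantees that the relevant Hölder seminorms and the bounds in the space variable are controlled uniformly in $n$). Applying Proposition \ref{holderU} to each classical solution $U_n$ therefore yields the uniform estimate \eqref{esthold} with a single constant $C'$ and a single exponent $\gamma$ independent of $n$. This provides equicontinuity in $(t,x,m)$ on every compact set $[0,T] \times \mathbb{T}^d \times \mptd$; together with the uniform $C^2$-bound in the space variable furnished by the proof of Proposition \ref{holderU} (giving $\sup_{s,\mu}\|U_n(s,\cdot,\mu)\|_{2+\alpha} \leq C$), I would invoke the Arzel\`a--Ascoli theorem on this compact domain. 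Since $(\mptd, \textbf{d}_1)$ is compact, the family is relatively compact in the topology of $\mathcal{B}_t$, so up to a subsequence $U_n$ converges locally uniformly to some limit $U$, and the uniform $C^2$ bounds pass to the limit so that $U \in \mathcal{B}_t$ and $\|\Delta(U_n - U)\|_0 \to 0$.

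\medskip

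Finally I would invoke the stability result. With $U_n$ a sequence of monotone solutions of \eqref{me} (associated with $H, f_n, U_{0,n}$), with $f_n \to f$ locally uniformly and $U_n \to U$ locally uniformly in $\mathcal{B}_t$, Proposition \ref{stabtime} directly asserts that the limit $U$ is a monotone solution of \eqref{me} associated with $H$ and $f$. I would also check that the initial condition passes to the limit: since $U_n(0,\cdot,\cdot) = U_{0,n} \to U_0$ uniformly and $U_n(0,\cdot,\cdot) \to U(0,\cdot,\cdot)$, we get $U(0,\cdot,\cdot) = U_0$, which is the second requirement of Definition \ref{deft}. Uniqueness, and the fact that $U(t)$ is a monotone application, then follow from Theorem \ref{uniqtime}, completing the proof. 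I expect the main obstacle to be verifying that the uniform a priori estimate of Proposition \ref{holderU} truly holds along the approximating sequence with constants independent of $n$ --- in particular ensuring the approximating data can be taken to be simultaneously smooth, monotone, and uniformly bounded in the Hölder seminorms --- rather than the compactness or stability steps, which are comparatively mechanical once the uniform bound is in hand.
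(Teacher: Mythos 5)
Your proposal follows essentially the same route as the paper's own proof: approximate $(f,U_0)$ by regular couplings, obtain classical (hence monotone) solutions $U_n$ from Theorem \ref{existclassic}, extract a locally uniformly convergent subsequence via the H\"older estimate of Proposition \ref{holderU} and the Arzel\`a--Ascoli theorem, and conclude with the stability result of Proposition \ref{stabtime}. The two points you flag as delicate (choosing approximants that remain monotone, and uniformity in $n$ of the constants entering Proposition \ref{holderU}) are passed over silently in the paper's short proof, so your version is, if anything, more careful on exactly the steps that deserve it.
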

\begin{proof}
Let us consider sequences $(f_n)_{n \geq 0}$ and $(U_{0,n})_{n \geq 0}$ which approximate $f$ and $U_0$. For any $n \geq 0$, thanks to Theorem \ref{existclassic}, there exists a (unique) solution $U_n$ of \eqref{me} associated to $f_n$ and $U_{0,n}$. Using Proposition \ref{holderU}, we obtain that the sequence $(U_n)_{n \geq 0}$ is a uniformly continuous sequence of functions. Since it is bounded uniformly for $t = 0$, we deduce from Ascoli-Arzela Theorem that $(U_n)_{n \geq 0}$ is a compact sequence for the uniform convergence. Hence, extracting a subsequence if necessary, it converges toward a limit $U_*$. Since for all $n \geq 0$, $U_n$ is a classical solution of \eqref{me} (associated to $f_n$ and $U_{0,n}$) we deduce that it is also a monotone solution of the same equation. We finally conclude using Proposition \ref{stabtime} that $U_*$ is a monotone solution of \eqref{me}. Note that Proposition \ref{stabtime} can indeed be used because \eqref{eq781} holds uniformly in $n$.
\end{proof}
\color{black}
We do not want to enter into the complete problem of the regularization of functions from $\mptd$ to $C(\mathbb{T}^d)$, that is why we add the assumption that $f$ and $U_0$ have to be in the closure of more regular functions. This assumption seems somehow to be necessary. Indeed, even if there are ways to regularize in a general manner such functions, for instance the mollifiers introduced in \citep{mou}, the conservation of monotonicity through this process is not true in general.

Moreover, the set of couplings we are interested in is not trivial. Indeed, consider for instance a coupling $f$ defined on $\mathbb{T}^d\times \mptd$ by
\begin{equation}
f(x,m):= \int_{\mathbb{T}^d}\phi(z,(m\star\rho)(z))\rho(x-z)dz,
\end{equation}
where $\phi : \mathbb{R}^2 \to \mathbb{R}$ is a continuous function, smooth in its first argument and H\"older continuous in its second one, with $\rho$ a smooth non-negative even function. By regularizing $\phi$, one obtain a regularization of $f$. Hence such a coupling $f$ is the closure of couplings satisfying the assumptions of Theorem \ref{existclassic}. Moreover it satisfies the required assumptions.

We now give a sufficient condition which implies such an approximation property for the coupling. Note that it is independent from the previous example.
\begin{Prop}
Consider a monotone $f : \mptd \to \mathcal{C}(\mathbb{T}^d)$ and denote by $\rho$ a non-negative, even, $\mathcal{C}^{\infty}$ function on $\mathbb{T}^d$. For $\epsilon > 0$, define $\rho_{\epsilon}(\cdot) = \epsilon^{-d}\rho(\epsilon^{-1}\cdot)$. Assume that for some $k \geq 0$, the restriction of $f : \mptd \cap \mathcal{C}^k(\mathbb{T}^d) \to \mathcal{C}^{-k}(\mathbb{T}^d)$ is $C^{1,1}$, for the norms\footnote{$C^{-k}$ is the topological dual set of $C^{k}$ while we understand $C^{1,1}$ in the sense that the usual differential is a Lipschitz function.} of $C^k$ and $C^{-k}$. Define 
\begin{equation}
f^{\epsilon}(m)(x) = (\rho_{\epsilon}\star f(m\star\rho_{\epsilon}))(x).
\end{equation}
Then, $f^{\epsilon}$ satisfies the requirements of Theorem \ref{existclassic}, is monotone and if $f$ is continuous, $f^{\epsilon}\to f$ uniformly as $\epsilon \to 0$.
\end{Prop}
\begin{proof}
Consider $µ,\nu$ and compute
\begin{equation}
\begin{split}
\langle f^{\epsilon}(µ)-f^{\epsilon}(\nu),µ - \nu\rangle &= \langle f(µ\star \rho_{\epsilon}) - f(\nu \star\rho_{\epsilon}), µ\star \rho_{\epsilon} - \nu\star \rho_{\epsilon}\rangle\\
& \geq 0,
\end{split}
\end{equation}
since $f$ is monotone. The fact that $f^{\epsilon}$ has the required regularity follows from standard properties of the convolution product.
\end{proof}

\color{black}

\section{First order master equations with common noise}\label{sec:firstcom}
The interest of this section is twosome. First we want to present some structures of common noise in MFG, which, despite not being entirely new, have attracted little attention in the literature even though they cover a wide range of applications. Secondly, we explain, without entering into the same amount of details as we did for \eqref{sme} or \eqref{me}, how the notion of monotone solution is helpful for such cases. We mainly look at two situations. The first one in which an additional parameter is added which is stochastic and which affects all the players in the same way (like a price on a market for instance). The second one is a situation in which players jump in a coordinated manner at random times, as the one introduced in \citep{bertucci2019some}. The types of noises at interest in this section seem to be more used in the Economics literature \citep{scheinkman1986borrowing,krusell1998income,gabaix2016dynamics,achdou2022income} than studied in the mathematics one.

For pedagogical reasons, we introduce first a case in which the additional parameter has only two states.

As most of the following analysis does not rely on the fact that we are in a time dependent or a stationary situation, we choose here to focus on the time dependent setting.

Finally, let us state that this section is not particularly concerned with the existence of monotone or classical solutions of the master equations we shall introduce. We present in subsection \ref{subs:apriori} an a priori estimate which is valid in all the cases we shall introduce and only detail a proof of existence of monotone solutions in the case of common jumps.

\subsection{Additional two-state stochastic parameter}
Consider a MFG which is similar to the one presented in section \ref{sec:model} except for the fact that there is an additional parameter $p$, totally exogenous from the rest of the game, which can take two values $p_1$ and $p_2$. We assume that this parameter affects the player in the following way : when $p = p_i$ the running cost of the player is a function $f_i : \mathbb{T}^d \times \mptd \to \mathbb{R}$. We assume that $p$ is a random process which jump from $p_1$ to $p_2$ with a transition rate $\lambda_1 > 0$, and from $p_2$ to $p_1$ with a transition rate $\lambda_2$. In such a situation, the associated master equation is in fact the system of two master equations
\begin{equation}\label{me2p}
\begin{aligned}
\partial_t U_i& - \sigma \Delta U_i + H(x,\nabla_x U_i) - \left\langle \frac{\delta U_i}{\delta m}(x,m,\cdot), \text{div}\left(D_pH(\cdot, \nabla U_i(\cdot,m))m\right)\right\rangle\\
& - \sigma\left\langle  \frac{\delta U_i}{\delta m}(x,m,\cdot), \Delta m \right\rangle  + \lambda_i(U_i - U_j)= f_i(x,m) \text{ in } (0,\infty)\times \mathbb{T}^d\times \mptd,\\
U_i(0,&x,m) = U_0(x,m) \mathbb{T}^d\times \mptd,
\end{aligned}
\end{equation}
where $i,j \{1;2\}, i \ne j$. We do not derive precisely the previous equation as it is quite standard (at least given the derivation of the usual master equation). However, let us briefly comment on it. Consider the problem of a player in the MFG in the state $p= p_1$ facing almost the situation described above, except for the fact that, at the random times which described the switches of $p$, the game stops for the player and it is getting the value $\varphi(t,x,m)$ if this event occurs at time $t$, that it is in the position $x$ and the players it is facing are described by $m$. In such a situation, it is quite standard that the correct term to model such phenomenon should be $\lambda(U_1 - \varphi)$ instead of $\lambda (U_1 - U_2)$. To obtain \eqref{me2p}, it suffices to remark that here the game does not stop and $U_2$ plays the role of $\varphi$.
\begin{Rem}
Not only $f$, but $H,$ $\sigma$ and $U_0$ could have depended on $i$, without changing any of the following. We only restricted ourselves to the case of $f$ to lighten the notation.
\end{Rem}
Following section \ref{sec:mon}, we naturally propose the following definition of monotone solution for \eqref{me2p}.
\begin{Def}\label{defp2}
A pair of functions $(U_1,U_2) \in \mathcal{B}_t$ is a monotone solution of \eqref{me2p}  if \begin{itemize} \item for any $\mathcal{C}^2$ functions $(\phi_1, \phi_2) : \mathbb{T}^d \to \mathbb{R}$, for any measure $\nu \in \mmtd$, for any smooth function $\vartheta : [0,\infty) \to \mathbb{R}$, $T > 0$ and any point $(i_0,t_0,m_0) \in \{1;2\} \times(0,T]\times\mptd$ of strict minimum of $(i,t,m) \to \langle U_i(t,\cdot,m)- \phi_i, m- \nu \rangle - \vartheta(t)$ on $\{1;2\}\times [0,T]\times \mptd$, the following holds
\begin{equation}
\begin{aligned}
\frac{d \vartheta}{dt}(t_0)& +  \langle -\sigma \Delta U_{i_0} + H(\cdot,\nabla_x U_{i_0}), m_0 - \nu \rangle + \lambda_{i_0}\langle U_{i_0} - U_{j_0}, m_0 - \nu\rangle\\
 \geq& \langle f_{i_0}(\cdot,m_0),m_0 - \nu\rangle  - \langle U_{i_0} - \phi_{i_0}, \text{div}(D_pH(\nabla_x U_{i_0})m_0)\rangle\\
  &- \sigma \langle \Delta(U_{i_0}- \phi_{i_0}),m_0\rangle,
\end{aligned}
\end{equation} where $j_0 \ne i_0$.
\item the initial condition holds
\begin{equation}
U_1(0,\cdot,\cdot) = U_2(0,\cdot,\cdot)= U_0(\cdot,\cdot).
\end{equation}
\end{itemize}
\end{Def}
We now present a result of uniqueness of monotone solutions of \eqref{me2p}, which does not rely on any particular assumption on the evolution of the process $p$. This is not surprising since the uniqueness of solutions arises from monotonicity properties which models a sort of competition between the players, and this additional parameter does not perturb the competition between the players.
\begin{Theorem}
Under Hypothesis \ref{hypmon}, two pairs of functions $(U_1,U_2)$ and $(V_1,V_2)$, monotone solutions of \eqref{me2p} in the sense of Definition \ref{defp2}, only differ by a pair of functions $c_1,c_2 : [0,\infty) \times \mptd \to \mathbb{R}$.\end{Theorem}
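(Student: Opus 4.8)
The plan is to transcribe the proof of Theorem \ref{uniqtime}, the only genuinely new ingredient being the treatment of the coupling terms $\lambda_{i_0}\langle U_{i_0}-U_{j_0},\cdot\rangle$. Given two monotone solutions $(U_1,U_2)$ and $(V_1,V_2)$, I would introduce, on $\{1,2\}\times[0,t_*]^2\times\mptd^2$, the functional
\[
W(i,t,s,\mu,\nu)=\langle U_i(t,\cdot,\mu)-V_i(s,\cdot,\nu),\mu-\nu\rangle,
\]
keeping the index $i$ \emph{shared} between $U$ and $V$ (this is what will let the two coupling terms recombine), and aim at $W(i,t,t,\mu,\nu)\geq0$. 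Assuming the contrary, I would penalize with $\frac{1}{2\alpha}(t-s)^2+\gamma_1 t+\gamma_2 s$ to decouple the two time variables, then invoke Lemma \ref{Stegall} to add small linear terms $\langle\phi_i,\mu\rangle+\langle\psi_i,\nu\rangle$ producing a strict minimum at some $(i_0,t_0,s_0,\mu_0,\nu_0)$. The interior case $t_0,s_0>0$ is reached exactly as in Theorem \ref{uniqtime} via the bound $|t_0-s_0|\leq C\sqrt\alpha$, the boundary case being excluded since $W(i,0,0,\mu,\nu)=\langle U_0(\mu)-U_0(\nu),\mu-\nu\rangle\geq0$ by \eqref{monu0}.

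Next I would freeze $(s_0,\nu_0)$ and apply Definition \ref{defp2} to $U$ at the minimizing index $i_0$, choosing the test function for $U$ at $i_0$ to be $V_{i_0}(s_0,\nu_0)-\phi_{i_0}$ so that $U_{i_0}-V_{i_0}(s_0,\nu_0)+\phi_{i_0}$ appears in the transport and Laplacian terms; symmetrically for $V$ after freezing $(t_0,\mu_0)$. Summing the two inequalities, the $\tfrac1\alpha(t_0-s_0)$ terms cancel, the Hamiltonian and transport terms are absorbed by the convexity of $H$ as in Proposition \ref{uniqsmooth}, and monotonicity of $f_{i_0}$ discards $\langle f_{i_0}(\mu_0)-f_{i_0}(\nu_0),\mu_0-\nu_0\rangle\geq0$. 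The new coupling terms combine into
\[
\lambda_{i_0}\big(\langle U_{i_0}-U_{j_0},\mu_0-\nu_0\rangle+\langle V_{i_0}-V_{j_0},\nu_0-\mu_0\rangle\big)=\lambda_{i_0}\big(W(i_0,\cdot)-W(j_0,\cdot)\big),
\]
all evaluated at $(t_0,s_0,\mu_0,\nu_0)$. This is the heart of the matter: because $i_0$ is the minimizing index of the perturbed functional, $W(i_0,\cdot)-W(j_0,\cdot)\leq\langle\phi_{j_0}-\phi_{i_0},\mu_0\rangle+\langle\psi_{j_0}-\psi_{i_0},\nu_0\rangle$ is controlled by the size of the Stegall perturbations, so the coupling contributes at most $C\lambda_{i_0}\epsilon$. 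One is then left with $\gamma_1+\gamma_2\leq C'\epsilon$, contradicting $\gamma_1+\gamma_2>\bar\epsilon$ for $\epsilon$ small. Hence $W\geq0$, which forces $U_i=V_i+c_i$ for some $c_i:[0,\infty)\times\mptd\to\mathbb{R}$ (constant in $x$) and shows that $U(t)$ is monotone.

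For uniqueness under Hypothesis \ref{hypmon+}, I would rerun the argument with the shifted functional $\langle U_i(t,\mu)-V_i(s,\nu),\mu-\nu+\epsilon\rho\rangle$ for an arbitrary non-negative $\rho\in\mmtd$, following the second half of Theorem \ref{uniqtime}: passing to the limit $\epsilon\to0$ along the minimizers forces $f_{i_0}(\mu_0)=f_{i_0}(\nu_0)$ by \eqref{monf+}, and the resulting $O(\epsilon)$ expansion yields $\langle c_i(\nu),\rho\rangle\geq0$, hence $c_i\geq0$; the symmetric argument, swapping the roles of $U$ and $V$, gives $c_i\leq0$, so $c_i=0$ and $U_i=V_i$. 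The coupling terms are again controlled by the minimality of $i_0$, exactly as above, which is why no assumption on the jump rates $\lambda_i$ is needed.

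The step I expect to be the genuine obstacle is ensuring that Definition \ref{defp2} can be applied to $U$ and to $V$ at the \emph{same} index $i_0$: after freezing, the admissible test form for $U$ differs from the $U$-slice of the perturbed functional by an index-dependent constant $\langle\phi_i+\psi_i,\nu_0\rangle$ which, because $\nu_0\in\mptd$, cannot be absorbed into the test function. One must therefore check that the minimizing index of the admissible form still coincides with $i_0$. This holds whenever the limiting index-gap is strict, since for $\epsilon$ small the perturbation cannot flip the index; and in the degenerate tie case it is harmless, because then $W(i_0,\cdot)-W(j_0,\cdot)\to0$ and the coupling term vanishes anyway. Making this dichotomy rigorous uniformly in $(\alpha,\gamma_1,\gamma_2,\epsilon)$ is the delicate point; everything else is a direct transcription of the scalar arguments of Theorems \ref{uniqthmstat} and \ref{uniqtime}.
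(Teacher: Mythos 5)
Your proof is, in its main thrust, exactly the paper's: the paper also introduces $W_i(t,s,\mu,\nu)=\langle U_i(t,\cdot,\mu)-V_i(s,\cdot,\nu),\mu-\nu\rangle$ with the index shared between the two solutions, observes that at a perturbed strict minimum $(i_0,t_0,s_0,\mu_0,\nu_0)$ the two coupling terms recombine into $\lambda_{i_0}(W_{i_0}-W_{j_0})$, and discards this term because it has a sign at a point of minimum over the index; every other term is handled as in Theorem \ref{uniqtime}. Granting that Definition \ref{defp2} can actually be invoked for $U$ and for $V$ at the common index $i_0$, your quantitative variant (coupling term bounded by $C\lambda_{i_0}\epsilon$ rather than nonpositive, to accommodate index-dependent Stegall perturbations) is correct and still yields the contradiction with $\gamma_1+\gamma_2>\bar\epsilon$.

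The obstacle you flag at the end is genuine --- and the paper's very short proof is silent about it --- but your dichotomy does not dispose of it. In the tie case the difficulty is not the size of the coupling term (which indeed vanishes); it is that the admissible map $(i,t,\mu)\mapsto\langle U_i(t,\cdot,\mu)-\tilde\phi_i,\mu-\nu_0\rangle-\vartheta(t)$ then has no strict minimum on $\{1;2\}\times(0,\infty)\times\mptd$ at all: both slices attain the same minimal value, Definition \ref{defp2} requires strictness jointly in $(i,t,m)$, and so it produces no inequality whatsoever. One cannot argue that a term in an inequality is negligible when the inequality itself is unavailable; and, as you note, the offending constants $\langle\phi_i+\psi_i,\nu_0\rangle$ cannot be absorbed into the test functions because $\nu_0\in\mptd$. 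So at this precise point your argument is incomplete.

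The repair is cheap once one notices which data the inequality of Definition \ref{defp2} actually uses: it involves $\phi_{i_0}$ but never $\phi_{j_0}$. First take the Stegall perturbations index-independent, by applying Lemma \ref{Stegall} (in the product form already used in Theorem \ref{uniqtime}) to $(t,s,\mu,\nu)\mapsto\min_i W_i(t,s,\mu,\nu)+\frac{1}{2\alpha}(t-s)^2+\gamma_1 t+\gamma_2 s$. Then the mismatch constants are equal on the two slices, $(t_0,s_0,\mu_0,\nu_0)$ is a strict minimum in the base variables with $i_0$ a minimizing index, and the only possible failure of joint strictness is an exact tie $W_{i_0}=W_{j_0}$ at that single base point. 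Such a tie is broken by replacing $\phi_{j_0}$ with $\phi_{j_0}+\eta$, where $\eta$ is small in $\mathcal{C}^2$ and satisfies $\langle\eta,\mu_0-\nu_0\rangle<0$; this is possible because $\mu_0\neq\nu_0$ (the minimal value is $\leq-\delta<0$, while the penalized functional is bounded below by $-C\epsilon$ on the set $\{\mu=\nu\}$, so take $\epsilon$ small compared to $\delta$). This modification turns $(i_0,t_0,\mu_0)$ into a strict joint minimum while leaving the extracted inequality at $i_0$ untouched, and symmetrically for $V$; the coupling term is then exactly nonpositive, which is the paper's ``this term has a sign''. The same device closes the Hypothesis \ref{hypmon+} half of your argument, where the identical issue arises.
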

\begin{Rem}
We here understand Hypotheses \ref{hypmon} in the sense that $f(\cdot,p_1)$ and $f(\cdot,p_2)$ both satisfy the monotonicity assumptions. As in the previous case, we could have adapted Section \ref{sec:extensionuniqueness} in this situation.
\end{Rem}
\begin{proof}
The proof of this result is very similar to the one of Theorem \ref{uniqtime}. Let us take two solutions $(U_1,U_2)$ and $(V_1,V_2)$ and define $W : \{1;2\}\times [0,T]^2 \times \mptd^2$ by
\begin{equation}
W_i(t,s,µ,\nu) = \langle U_i(t,µ) - V_i(s,\nu), µ - \nu \rangle.
\end{equation}
Let us now remark, following the same argument as in the proof of Theorem \ref{uniqtime}, that at a point $(i_0,t_0,s_0,µ_0,\nu_0)$ of strict minimum of $W$ (up to the addition small perturbations using Lemma \ref{Stegall}), the term arising in the relation of monotone solutions from the additional term in $\lambda_{i_0}$ is of the form
\begin{equation}
\lambda_{i_0}(W_{i_0}(t_0,s_0,µ_0,\nu_0) - W_{j_0}(t_0,s_0,µ_0,\nu_0))
\end{equation}
for $j_0 \ne i_0$. Because we are at a point of minimum of $W$, this term has a sign and the rest of the proof follows quite easily.
\end{proof}
\begin{Rem}
Even though they are true, we do not write once again precise results of stability or consistency for this particular master equation because they are merely trivial adaptations of the ones we already gave.
\end{Rem}

\subsection{Additional stochastic parameter following a stochastic differential equation}
We now place ourselves in the same framework as in the previous section, except for the fact that now the parameter $p$ is supposed to evolve according to 
\begin{equation}\label{sdep}
dp_t = b(p_t) dt + \sqrt{2\sigma'}dB_t
\end{equation}
where $(B_t)_{t \geq 0}$ is a standard $k$ dimensional Brownian motion, $b : \mathbb{R}^k \to \mathbb{R}^k$ a smooth function and $\sigma' \geq 0$. For simplicity, we assume that $p$ is valued in $\mathbb{T}^k$ (using the classical quotient $\mathbb{T}^k = \mathbb{R}^k / \mathbb{Z}^k$). This simplification does not play a role if for the fact that it simplifies the notation and the formulation of some results. Following the previous subsection on the two states case, we want to define the value function of the MFG as a function of $p$ (in addition to the other variables). Assuming now that $f$ is also a function of $p$, we naturally arrive at the master equation
\begin{equation}\label{mecp}
\begin{aligned}
\partial_t U& - \sigma \Delta_x U + H(x,\nabla_x U) - \sigma'\Delta_p U - b\cdot \nabla_p U - \sigma\left\langle \dmU(x,m,p,\cdot), \Delta m \right\rangle \\
-& \left\langle \dmU(x,m,p,\cdot), \text{div}\left(D_pH(\cdot, \nabla_x U(\cdot,m,p))m\right)\right\rangle  = f(x,m,p) \text{ in } (0,\infty)\times \mathbb{T}^d\times \mptd\times \mathbb{T}^k,\\
U(0&,x,m,p) = U_0(x,m,p) \text{ in } \mathbb{T}^d\times \mptd\times \mathbb{T}^k.
\end{aligned}
\end{equation}
Let us observe that the additional terms in this equation simply follows from the usual computation of the infinitesimal change of the value along the equilibrium path. For instance if the game is trivial and $U$ does not depend on $m$, then those terms are simply associated to the generator of the stochastic differential equation \eqref{sdep}.

In the following, we are not going to enter into much details about the regularity of $U$ with respect to $p$. Let us only remark that if $f$ and $U_0$ are smooth functions of $p$, satisfying, uniformly in $p$, the assumptions of Theorem \ref{existclassic}, then we expect that there exists a classical solution of the master equation \eqref{mecp}. The following definition should by now seems natural to the reader.
\begin{Def}\label{defcp}
A function $U$ in $\mathcal{B}'_t$ is a monotone solution of \eqref{mecp} if \begin{itemize} \item for any $\mathcal{C}^2$ function $\phi : \mathbb{T}^d\times \mathbb{T}^k \to \mathbb{R}^d$, for any measure $\nu \in \mmtd$, for any smooth function $\vartheta : [0,\infty) \to \mathbb{R}$, any $T >0$ and any point $(t_0,m_0,p_0) \in (0,T]\times\mptd\times \mathbb{T}^k$ of strict minimum of \begin{equation}
(t,m,p) \to \langle U(t,\cdot,m,p)- \phi(\cdot,p), m- \nu \rangle - \vartheta(t),\end{equation} on $[0,T]\times \mptd\times \mathbb{T}^k$, the following holds
\begin{equation}
\begin{aligned}
\frac{d \vartheta}{dt}(t_0)& +  \langle -\sigma\Delta_x U  - b\cdot \nabla_p U - \sigma'\Delta_p U+ H(\cdot,\nabla_x U), m_0 - \nu \rangle\\
 \geq& \langle f(\cdot,m_0,p_0),m_0 - \nu\rangle  - \langle U - \phi, \text{div}(D_pH(\nabla_x U)m_0)\rangle - \sigma \langle \Delta_x(U- \phi),m_0\rangle.
\end{aligned}
\end{equation}
\item The initial condition holds
\begin{equation}
U(0,\cdot,\cdot,\cdot) = U_0(\cdot,\cdot,\cdot).
\end{equation}
\end{itemize}
\end{Def}
As it was the case in the two states model, a result of uniqueness can be established without much assumptions on the evolution of the stochastic process $(p_t)_{t\geq 0}$ or on the dependence of $f$ on it.
\begin{Theorem}
Under Hypothesis \ref{hypmon}, two monotone solutions of  \eqref{mecp} in the sense of Definition \ref{defcp} differ only by a function $c : [0,\infty)\times \mptd \times \mathbb{T}^k \to \mathbb{R}$. If such a solution $U$ exists, for any $t \geq 0, p \in \mathbb{T}^k$, $U(t,\cdot,\cdot,p)$ is monotone.
\end{Theorem}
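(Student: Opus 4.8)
The plan is to replicate the two-step structure of the proof of Theorem~\ref{uniqtime}, adapting it to accommodate the additional variable $p \in \mathbb{T}^k$. Given two monotone solutions $U$ and $V$ of \eqref{mecp}, I would define the functional
\begin{equation}
W(t,s,\mu,\nu,p) = \langle U(t,\cdot,\mu,p) - V(s,\cdot,\nu,p), \mu - \nu\rangle
\end{equation}
and aim to prove that $W(t,t,\mu,\nu,p) \geq 0$ for all $t \geq 0$, $\mu,\nu \in \mptd$, and $p \in \mathbb{T}^k$. The key structural observation, exactly as in the two-state case, is that the extra terms coming from the $p$-dynamics (namely $-\sigma'\Delta_p U - b\cdot\nabla_p U$) are \emph{linear} in $U$ and do not involve derivatives with respect to the measure argument; hence when subtracting the relations for $U$ and $V$ they assemble into transport-diffusion terms acting on $W$ in the $p$-variable, which carry a favorable sign at an interior minimum in $p$.

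\textbf{Step one: non-negativity of $W$ on the diagonal.} Suppose, for contradiction, that $W(t_*,t_*,\mu_1,\nu_1,p_1) < 0$ at some point. As in Theorem~\ref{uniqtime}, I would penalize to force a strict minimum: I would consider
\begin{equation}
W(t,s,\mu,\nu,p) + \langle \phi,\mu\rangle + \langle\psi,\nu\rangle + \tfrac{1}{2\alpha}(t-s)^2 + \gamma_1 t + \gamma_2 s + \eta|p-p_1|^2,
\end{equation}
using Lemma~\ref{Stegall} to select small $\phi,\psi \in \mathcal{C}^2$ (testing against the measure arguments, which is where Stegall applies) making the minimum strict. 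At an interior minimizer $(t_0,s_0,\mu_0,\nu_0,p_0)$ with $t_0,s_0 > 0$, I would write the defining inequality for $U$ (with test function $\phi(\cdot,p)$ and parameter $p_0$) and the analogous one for $V$, then sum. The monotonicity of $f$ and convexity of $H$ dispose of the Hamiltonian and coupling terms exactly as before. The new point is the $p$-derivative terms: since $p_0$ is an interior minimizer of $p \mapsto W(\cdots,p)$ (up to the $\eta|p-p_1|^2$ penalty), first-order conditions give $\nabla_p W = -2\eta(p_0-p_1)$ and the second-order condition gives $\Delta_p W \geq 0$ at $p_0$, so the assembled term $-\sigma'\langle\Delta_p W\rangle - b\cdot\langle\nabla_p W\rangle$ has a controlled sign contribution, the $\eta$-dependent pieces being made arbitrarily small. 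This yields the same contradiction ($-\gamma_1-\gamma_2 \geq$ arbitrarily small quantities) as in Theorem~\ref{uniqtime}. The boundary case $t_0=0$ or $s_0=0$ is handled via the initial condition and the $\tfrac{1}{2\alpha}(t-s)^2$ coupling precisely as in the cited proof.

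\textbf{Step two: eliminating $c$ under Hypothesis~\ref{hypmon+}.} From Step one we get $U = V + c$ for some $c:[0,\infty)\times\mptd\times\mathbb{T}^k \to \mathbb{R}$, and the monotonicity of $U(t,\cdot,\cdot,p)$ follows as before. To show $c \equiv 0$ under Hypothesis~\ref{hypmon+}, I would follow the perturbation argument of Theorem~\ref{uniqthmstat}: assuming $c(t_*,\nu_*,p_*) = -\delta_0 < 0$, I introduce an auxiliary $\rho \in \mmtd$, shift the measure argument by $\epsilon\rho$, penalize to obtain a strict minimum, and extract the leading $O(\epsilon)$ balance. Passing $\epsilon \to 0$ forces $f(\mu_0,p_0) = f(\nu_0,p_0)$ via \eqref{monf+}, and dividing by $\epsilon$ yields the contradiction. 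The $p$-terms again contribute only through the sign-definite diffusion/transport structure at the minimizer and vanish in the limit.

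\textbf{Main obstacle.} The delicate point is the treatment of the $p$-variable, which is \emph{not} regularized by Stegall's lemma (that lemma perturbs only in directions dual to measures). I must therefore extract the minimum in $p$ genuinely, relying on the assumed continuity of $U$ in $p$ (built into $\mathcal{B}'_t$) together with the compactness of $\mathbb{T}^k$, and ensure that the second-order term $\Delta_p U$ appearing in \eqref{mecp} is legitimately controlled. Since Definition~\ref{defcp} incorporates the full operator $-\sigma'\Delta_p U - b\cdot\nabla_p U$ inside the tested expression rather than isolating a classical $\Delta_p$ of a smooth object, the favorable sign must be read off directly from the minimality of $W$ in $p$; making this rigorous — rather than merely formal — is where the argument requires the most care, and it parallels the way the jump term was handled in Definition~\ref{defp2}.
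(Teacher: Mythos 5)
Your proposal is correct and takes essentially the same route as the paper: the paper's proof defines the same functional $W(t,s,\mu,\nu,p)=\langle U(t,\cdot,\mu,p)-V(s,\cdot,\nu,p),\mu-\nu\rangle$, penalizes exactly as in Theorem \ref{uniqtime} with Stegall perturbations, and rests on the same key observation that the extra terms assemble at the minimizer into $-b(p_0)\cdot\nabla_p W-\sigma'\Delta_p W$, which has a favorable sign at an interior minimum in $p$ (the paper does this without your $\eta$-penalty, since without it the first-order condition kills the drift term exactly and $\Delta_pW\geq 0$ handles the diffusion; your $O(\eta)$ error management is a harmless variant). Your second step under Hypothesis \ref{hypmon+} (the $\epsilon\rho$ shift, with no fixed-point constraint on $\rho$ needed here) likewise matches what the paper invokes from Theorem \ref{uniqthmstat}, and the strictness-in-$p$ issue you flag is glossed over at the same level of informality in the paper itself.
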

\begin{Rem}
We understand Hypotheses \ref{hypmon} in the sense that $f(\cdot,p)$ satisfies the monotonicity assumptions for any value of $p$ and, as previously, developments such as in Section \ref{sec:extensionuniqueness} could have been carried on.
\end{Rem}
\begin{proof}
The proof of this result is once again very similar to the one of Theorem \ref{uniqtime}. Let us take two solutions $U$ and $V$ and define $W :  [0,T]^2 \times \mptd^2\times \mathbb{T}^k$ by
\begin{equation}
W(t,s,µ,\nu,p) = \langle U(t,\cdot,µ,p) - V(s,\cdot,\nu,p), µ - \nu \rangle.
\end{equation}
Let us now remark, following the same argument as in the proof of Theorem \ref{uniqtime}, that at a point $(t_0,s_0,µ_0,\nu_0,p_0)$ of strict minimum of $W$ (up to the addition small perturbations using Lemma \ref{Stegall}), the term arising in the relation of monotone solutions from the additional terms in $p$ is of the form
\begin{equation}
-b(p_0)\cdot \nabla_p W(t_0,s_0,µ_0,\nu_0,p_0) - \sigma' \Delta_pW(t_0,s_0,µ_0,\nu_0,p_0).
\end{equation}
Because we are at a point of minimum of $W$ (in particular it is also a minimum in $p$), this term has a sign and the rest of the proof follows quite easily.
\end{proof}
\begin{Rem}
Because few information is needed for the regularity in $p$ of $W$ in the previous proof, it is very likely that continuity with respect to $p$ and viscosity solution like information are sufficient to characterize monotone solution of \eqref{mecp}, although we do not claim that such results are trivially in the scope of this paper.
\end{Rem}

\subsection{Common jumps}\label{sec:commonjumps}
We now introduce, in the continuous state space framework, a type of common noise similar to the one introduced in \citep{bertucci2019some}. More precisely, we want to model situations in which, at random times which are given by a Poisson process of intensity $\lambda > 0$, all the players in the game are affected by a common transformation. This transformation can be deterministic, for instance all the players in the state $x$ are transported in a state $\Lambda (x)$. It can also carry a form of randomness which is distributed in an i.i.d. fashion among the players. In such a situation, all the players in a state $x$ are going to be transported to a new state which is drawn according to a distribution on the state space $K(\cdot,x)$, independently from one another. So that if before the jumps, the players are distributed according to $m \in \mptd$, they are distributed according to $\int_{\mathbb{T}^d}K(x,y)m(dy)$ immediately after the jump. We refer to \citep{bertucci2019some} for more details on this type of noise (in the finite state space case).\\

In the following we assume that $K$ is a non-negative smooth function on $(\mathbb{T}^d)^2$ such that for all $y \in \mathbb{T}^d$, $\int_{\mathbb{T}^d}K(x,y)dx = 1$. We define the operator $\mathcal{T}$ by
\begin{equation}\label{defT}
\forall m \in \mptd, x \in \mathbb{T}^d, \mathcal{T}(m)(x) = \int_{\mathbb{T}^d}K(x,y)m(dy).
\end{equation}
Let us recall that the adjoint $\mathcal{T}^*$ of $\mathcal{T}$ is given by
\begin{equation}
\forall \phi \in \mathcal{C}^0(\mathbb{T}^d), y \in \mathbb{T}^d, \mathcal{T}^*(\phi)(y) = \int_{\mathbb{T}^d}K(x,y)\phi(x)dx.
\end{equation}

Because the players anticipate the noise and the fact that they are going to be transported to another state, the associated master equation is given by
\begin{equation}\label{met}
\begin{aligned}
\partial_t& U - \sigma \Delta U + H(x,\nabla_x U) - \left\langle \dmU(x,m,\cdot), \text{div}\left(D_pH(\cdot, \nabla_x U(\cdot,m))m\right)\right\rangle\\
&+ \lambda \bigg(U - \mathcal{T}^*(U(t,\cdot,\mathcal{T}(m)))\bigg) - \sigma\left\langle \dmU(x,m,\cdot), \Delta m \right\rangle = f(x,m) \text{ in } (0,\infty)\times\mathbb{T}^d\times \mptd,\\
&U(0,x,m) = U_0(x,m) \text{ in } \mathbb{T}^d\times \mptd.
\end{aligned}
\end{equation}
This master equation is obviously reminiscent of the one studied in \citep{bertucci2020monotone} (in the finite state space case). Once again we state an appropriate notion of solution for this equation.
\begin{Def}\label{deftt}
A function $U \in \mathcal{B}_t$ is a monotone solution of \eqref{met}  if \begin{itemize} \item For any $\mathcal{C}^2$ function $\phi : \mathbb{T}^d \to \mathbb{R}$, for any measure $\nu \in \mmtd$, for any smooth function $\vartheta : [0,\infty) \to \mathbb{R}$, any $T>0$ and any point $(t_0,m_0) \in (0,T]\times\mptd$ of strict minimum of $(t,m) \to \langle U(t,\cdot,m)- \phi, m- \nu \rangle - \vartheta(t)$ on $[0,T]\times \mptd$, the following holds
\begin{equation}
\begin{aligned}
\frac{d \vartheta}{dt}(t_0)& +  \left\langle - \sigma \Delta U + H(\cdot,\nabla_x U) + \lambda \bigg( U - \mathcal{T}^*(U(t,\cdot,\mathcal{T}(m)))\bigg) , m_0 - \nu \right\rangle\\
 \geq& \langle f(\cdot,m_0),m_0 - \nu\rangle-  \langle U - \phi, \text{div}(D_pH(\nabla_x U)m_0)\rangle  - \sigma \langle \Delta(U- \phi),m_0\rangle.
\end{aligned}
\end{equation}
\item The initial condition holds
\begin{equation}
U(0,\cdot,\cdot) = U_0(\cdot,\cdot).
\end{equation}
\end{itemize}
\end{Def}
As we did in the previous cases, we can establish the following uniqueness result. 
\begin{Theorem}
Under Hypothesis \ref{hypmon}, two monotone solutions of \eqref{met} in the sense of Definition \ref{deftt} only differ by a function $c :[0,\infty) \times\mptd \to \mathbb{R}$. If such a monotone solution $U$ exists, then $U(t)$ is actually monotone for all time $t \geq 0$.
\end{Theorem}
\begin{proof}
The proof of this result is once again very similar to the one of Theorem \ref{uniqtime}. For the first part of the claim, let us take two solutions $U$ and $V$ and define $W :  [0,T]^2 \times \mptd^2 : \mathbb{R}$ by
\begin{equation}
W(t,s,µ,\nu) = \langle U(t,\cdot,µ) - V(s,\cdot,\nu), µ - \nu \rangle.
\end{equation}
Let us now remark, following the same argument as in the proof of Theorem \ref{uniqtime}, that at a point $(t_0,s_0,µ_0,\nu_0)$ of strict minimum of $W$ (up to the addition small perturbations using Lemma \ref{Stegall}), the relation of monotone solutions one obtains is similar to the classical one except  for the addition of a term in $\lambda$. When combining the relation from $U$ and the one from $V$, one obtain the same relation except for the addition of the term
\begin{equation}
\lambda\bigg(W(t_0,s_0,\mu_0,\nu_0) - W(t_0,s_0,\mathcal{T}(\mu_0),\mathcal{T}(\nu_0))\bigg).
\end{equation}
Because we are at a point of minimum of $W$, this term has a sign and the rest of the proof follows quite easily. 
\end{proof}

\subsection{An a priori estimate for the solution of the master equation with common noise}\label{subs:apriori}
We now show an a priori estimate which is essentially valid for all the master equations we have written up to now. We only state (and prove) it in the case of \eqref{met} and we leave its generalization to the other master equations to the interested reader. Although it is not sufficient to establish general results of existence, we believe that it may be a good starting point for such results. Furthermore, it is the essential tool to prove existence of solutions of \eqref{met} (the forthcoming Theorem \ref{thm:existt}). \\

To state our result in an understandable fashion while taking care of the normalization constraint used in the definition of $\dmU$, we work with a different normalization of $\dmU$ for the remainder of this Section. Namely we now impose that $\dmU$ is such that $\int_{\mathbb{T}^d}\dmU = 0$, taking this normalization against the Lebesgue measure instead of the measure $m$ itself.\\

The a priori estimate we want to present is valid only under additional monotonicity assumptions on $f$ and $U_0$. We assume that $f$ and $U_0$ are differentiable with respect to the measure argument and that there exists $\alpha > 0$ such that for all $µ \in \mptd, \nu \in L^2_0(\mathbb{T}^d):= \{f \in L^2, \int f = 0\},$
\begin{equation}\label{strongmon1}
\left\langle \nu \bigg| \frac{\delta f}{\delta m}(\cdot,µ,\cdot) \bigg| \nu  \right \rangle \geq \alpha \left\| \left \langle \frac{\delta f}{\delta m}(\cdot,µ), \nu\right\rangle \right\|_{L^2}^2,
\end{equation}
\begin{equation}\label{strongmon2}
\left\langle \nu \bigg| \frac{\delta U_0}{\delta m}(\cdot,µ,\cdot) \bigg| \nu  \right \rangle \geq \alpha \left\| \left \langle \frac{\delta U_0}{\delta m}(\cdot,µ), \nu\right\rangle \right\|_{L^2}^2.
\end{equation}
We only consider $\nu \in L^2_0$ because of the fact that $f$ and $U_0$ are only defined on $\mptd$ and not on all measures. The previous assumption is a sort of strong monotonicity assumption on $f$ and $U_0$. For instance for $\alpha = 0$ this assumption reduces to usual monotonicity for smooth $f$ and $U_0$. Furthermore, this assumption is weaker than $\alpha$ monotonicity. Indeed, if $f$ is smooth and satisfies for all $\nu \in L^2_0(\mathbb{T}^d)$,
\begin{equation}
\left\langle \nu \bigg| \frac{\delta f}{\delta m}(\cdot,µ,\cdot) \bigg| \nu  \right \rangle \geq \alpha \left\| \nu \right\|_{L^2}^2,
\end{equation}
for some $\alpha > 0$, then it satisfies \eqref{strongmon1} for all $\nu \in L^2(\mathbb{T}^d)$ (possibly for another $\alpha > 0$). Let us remark that such a requirement is satisfied for functions $f$ defined by 
\begin{equation}
\forall x \in \mathbb{T}^d, m \in \mptd, f(x,m) = \int_{\mathbb{T}^d}\Psi(z,m\star\rho(z))\rho(x-z)dz,
\end{equation}
for a smooth non negative function $\rho$ and a smooth function $\Psi : \mathbb{T}^d\times \mathbb{R} \to \mathbb{R}$ whose derivative with respect to the second argument $\partial_y \Psi$ satisfies $C^{-1} \leq \partial_y \Psi \leq C$ for some constant $C>0$. We can now state the a priori estimate.
\begin{Prop}\label{prop:est}
Under Hypothesis \ref{hypmon}, the assumption that $\mathcal{T}$ is continuous and linear and the stronger requirements \eqref{strongmon1} and \eqref{strongmon2}, for any $t_f > 0$, there exists $C > 0$ such that a classical solution $U$ of \eqref{met} satisfies for $t \in (0,t_f),\nu, \nu' \in L^2(\mathbb{T}^d)$ : 
\begin{equation}\label{aprioriest}
\left| \left \langle \nu \left |\dmU(t,\cdot,µ,\cdot)\right| \nu'\right \rangle \right | \leq C \|\nu\|_{L^2}\| \nu'\|_{L^2},
\end{equation}
where $C$ depends only on $\alpha, \sigma,H, \lambda$ and $\mathcal{T}$. If $\mathcal{T}$ is non expansive in $L^2$, then $C$ only depends on $\alpha,\sigma$ and $H$.
\end{Prop}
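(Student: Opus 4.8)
The plan is to reduce the bilinear bound \eqref{aprioriest} to an $L^2$-operator bound on the linearisation of $U$, and then to run an energy estimate on the associated linearised forward-backward system, the strong monotonicity \eqref{strongmon1}--\eqref{strongmon2} being exactly what upgrades the classical monotone computation of Proposition \ref{uniqsmooth} into an $L^2$ estimate. First I would fix the horizon $t$, the base measure $m$, and a signed density $\nu' \in L^2(\mathbb{T}^d)$, and linearise the common-jump characteristics of \eqref{met} in the direction $\nu'$. Writing $v(s,\cdot)$ for the sensitivity of the value and $\mu(s)$ for the sensitivity of the population, $(v,\mu)$ solves a linearised system: $\mu$ solves the linearised Fokker--Planck equation driven by $m\,D^2_{pp}H(\cdot,\nabla u)\nabla v$, initialised at $\nu'$ and transformed by $\mu \mapsto \mathcal{T}\mu$ at each common jump; $v$ solves the backward advection-diffusion equation with drift $D_pH(\cdot,\nabla u)$, source $g := \langle \tfrac{\delta f}{\delta m}(\cdot,m,\cdot),\mu\rangle$, terminal datum $\langle \tfrac{\delta U_0}{\delta m}(\cdot,m,\cdot),\mu\rangle$, and transformed by $v \mapsto \mathcal{T}^* v$ at each jump. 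The point of the reduction is that $v(t,\cdot) = \langle \dmU(t,\cdot,m,\cdot),\nu'\rangle$, so that $\langle \nu \,|\, \dmU(t,\cdot,m,\cdot) \,|\, \nu'\rangle = \langle \nu, v(t,\cdot)\rangle$ and, by Cauchy--Schwarz in $\nu$, \eqref{aprioriest} is equivalent to the single bound $\|v(t,\cdot)\|_{L^2} \leq C\|\nu'\|_{L^2}$.

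Next I would track the pairing $\langle v,\mu\rangle$ along the flow. Between jumps, the diffusion and the two transport terms cancel by integration by parts exactly as in Proposition \ref{uniqsmooth}, leaving
\[
\frac{d}{ds}\langle v,\mu\rangle = -\big\langle \mu \,\big|\, \tfrac{\delta f}{\delta m}(\cdot,m,\cdot)\,\big|\,\mu\big\rangle - \big\langle \nabla v,\, m\, D^2_{pp}H(\cdot,\nabla u)\,\nabla v\big\rangle,
\]
both terms being nonnegative by Hypothesis \ref{hypmon} and \eqref{hypH}. Crucially, at a common jump the pairing is unchanged: since $\mathcal{T}$ and $\mathcal{T}^*$ are adjoint, $\langle \mathcal{T}^* v,\mu\rangle = \langle v,\mathcal{T}\mu\rangle$, so $\langle v,\mu\rangle$ stays continuous through the jumps. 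Integrating from the horizon down to the end of the game (pathwise, then in expectation over the Poisson jumps, since $v(t,\cdot)$ is the expectation of the backward process) and using the terminal condition together with \eqref{strongmon1}--\eqref{strongmon2} gives
\[
\langle v(t,\cdot),\nu'\rangle \;\geq\; \alpha\,\|v_{\mathrm{ter}}\|_{L^2}^2 + \alpha\int \|g(s)\|_{L^2}^2\, ds,
\]
where $v_{\mathrm{ter}}$ is the terminal datum of the backward equation; that is, strong monotonicity directly controls the $L^2$ norms of both the source and the terminal datum of the equation for $v$.

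It then remains to convert this into a bound on $\|v(t,\cdot)\|_{L^2}$. Here I would run a standard backward parabolic energy estimate: testing the equation for $v$ against $v$ and using that $D_pH(\cdot,\nabla u)$ and its divergence are bounded (which holds because $u$ is a regular solution of the underlying Hamilton--Jacobi equation, cf. the a priori bounds in the proof of Proposition \ref{holderU}), Gronwall's lemma yields on each inter-jump interval an inequality of the form $\|v\|_{L^2}^2 \leq C\big(\|v_{\mathrm{ter}}\|_{L^2}^2 + \int \|g\|_{L^2}^2\big)$. Combining with the previous display gives $\|v(t,\cdot)\|_{L^2}^2 \leq (C/\alpha)\,\langle v(t,\cdot),\nu'\rangle \leq (C/\alpha)\,\|v(t,\cdot)\|_{L^2}\|\nu'\|_{L^2}$, hence $\|v(t,\cdot)\|_{L^2} \leq (C/\alpha)\|\nu'\|_{L^2}$ and the claim.

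The main obstacle is the jump term in this last energy estimate. Across each common jump, $v$ is replaced by $\mathcal{T}^* v$, so $\|v\|_{L^2}$ is multiplied by at most $\|\mathcal{T}\|_{\mathrm{op}}$; chaining the inter-jump estimates therefore produces an accumulated factor governed by the Poisson number of jumps on $[0,t_f]$ (intensity $\lambda$) and by $\|\mathcal{T}\|_{\mathrm{op}}$, which is exactly why $C$ depends on $\alpha$, $\lambda$ and $\mathcal{T}$. When $\mathcal{T}$ is non-expansive in $L^2$ this factor is at most $1$ and disappears, leaving $C = C(\alpha)$. Making this chaining rigorous and uniform --- controlling the accumulated $\|\mathcal{T}\|_{\mathrm{op}}$ factors and the interplay between the stochastic jumps and the deterministic parabolic estimates --- is the delicate point; the rest is the monotone identity of Proposition \ref{uniqsmooth} combined with classical linear parabolic theory.
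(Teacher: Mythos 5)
You take a genuinely different route from the paper, and it is worth saying what the paper does: it never touches the characteristics. It sets $W(t,\mu,\nu)=\langle U(t,\cdot,\mu),\nu\rangle$ and introduces the functional $Z_{\beta}(t,\mu,\nu)=\langle \frac{\delta W}{\delta \mu}(t,\mu,\nu),\nu\rangle-\beta(t)\,\|\frac{\delta W}{\delta \mu}(t,\mu,\nu)\|_{L^2}^2$ on $[0,t_f]\times\mptd\times L^2(\mathbb{T}^d)$, derives from \eqref{met} (by the chain rule) a PDE for $Z_\beta$, chooses $\beta$ solving the Riccati-type inequality $-\alpha^{-2}\beta^2+\lambda(1-\|\mathcal{T}\|_{\mathcal{L}(L^2)})\beta-\beta'\geq 0$ with $\beta(0)\in(0,\alpha)$, and shows $Z_\beta\geq 0$ by a maximum principle (compactness of $\mptd$, quadratic dependence in $\nu$, first order conditions as in Proposition \ref{firstcond}). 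This is a propagation-of-strong-monotonicity argument done entirely at the level of the master equation; your closing step (coercivity plus Cauchy--Schwarz) is the same as the paper's, but everything before it differs. Your intermediate inequality $\langle v(t),\nu'\rangle\geq\alpha\,\mathbb{E}[\|v_{\mathrm{ter}}\|^2+\int\|g\|^2]$ is the characteristics-side counterpart of $Z_\beta\geq 0$.

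There are, however, two genuine gaps. First, with common jumps the backward component of your linearised system is not the pathwise object you describe. The equation satisfied by the adapted sensitivity $v(s,\cdot)=\langle\dmU(t_f-s,\cdot,m(s),\cdot),\mu(s)\rangle$ between jumps carries the compensator $\lambda\,(v-\mathcal{T}^*v^{\mathrm{br}})$, where $v^{\mathrm{br}}(s,\cdot)=\langle\dmU(t_f-s,\cdot,\mathcal{T}m(s),\cdot),\mathcal{T}\mu(s)\rangle$ is the sensitivity along the \emph{jumped branch}; it cannot be expressed through $v$ on the current trajectory. Your description (no $\lambda$ terms between jumps, $v\mapsto\mathcal{T}^*v$ at jumps) defines instead a non-adapted representation $\tilde v$ with $v(s)=\mathbb{E}[\tilde v(s)\,|\,\mathcal{F}_s]$. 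It is for $\tilde v$ that the pairing is continuous across jumps by adjointness, whereas the forward perturbation $\mu$ must be driven by the adapted feedback $\nabla v$ (the linearisation of $D_pH(\cdot,\nabla_x U(\cdot,m(s)))$), not by $\nabla\tilde v$. Pathwise your "nonnegative" Hessian term is therefore the cross term $\langle\nabla\tilde v,\,m D^2_{pp}H\,\nabla v\rangle$, which has no sign; it becomes a square only after conditioning (tower property), and the jump-invariance of $\langle v,\mu\rangle$ for the adapted $v$ holds only in expectation, through cancellation of jumps against compensators (Dynkin's formula for the piecewise-deterministic process). These points are exactly the difficulty that the common noise creates and that the paper's master-equation-level proof is built to avoid; your write-up asserts both key signs pathwise, for objects for which they fail.

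Second, the constant. Your conversion step rests on a parabolic energy estimate for $\|v\|_{L^2}$, which unavoidably uses bounds on the drift $D_pH(\cdot,\nabla u)$ or its divergence, i.e.\ on the regularity of the particular classical solution $U$ (hence on $H$, $f$, $U_0$), on top of the Poisson chaining factors; moreover in that chaining $\mathbb{E}\big[\|\mathcal{T}\|_{\mathrm{op}}^{2N}\big(\|v_{\mathrm{ter}}\|^2+\int\|g\|^2\big)\big]$ cannot be compared to $\mathbb{E}\big[\|v_{\mathrm{ter}}\|^2+\int\|g\|^2\big]$ without an extra argument, since the number of jumps $N$ is correlated with the data. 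So your $C$ is not a function of $(\alpha,\lambda,\mathcal{T})$ alone, and for non-expansive $\mathcal{T}$ it is not a function of $\alpha$ alone, contrary to the statement. This uniformity is not cosmetic: it is what makes the estimate usable in the fixed-point argument of Theorem \ref{thm:existt}, where the coupling itself varies with the unknown, so a constant depending on the solution's regularity would be circular there. The paper gets the clean constant precisely because in the $Z_\beta$ computation the transport and diffusion terms sit inside the generator and are discarded at the minimum point by sign considerations, never entering a Gronwall factor.
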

\begin{proof}
Let us define $W,Z_{\beta} : [0,t_f]\times \mptd \times L^2_0(\mathbb{T}^d) \to \mathbb{R}$ by
\begin{equation}
W(t,µ,\nu) = \langle U(t,\cdot,µ), \nu\rangle,
\end{equation}
\begin{equation}
Z_{\beta}(t,µ,\nu) = \left\langle \frac{\delta W}{\delta µ}(t,µ,\nu),\nu\right\rangle - \beta(t) \left\langle  \frac{\delta W}{\delta µ}(t,µ,\nu), \frac{\delta W}{\delta µ}(t,µ,\nu)\right\rangle,
\end{equation}
for $\beta : [0,\infty) \to \mathbb{R}$ to be defined later on. Let us remark that $Z_{\beta}$ is a quadratic and smooth function of $\nu$. We denote by $\frac{\delta Z_{\beta}}{\delta \nu}$ the gradient of $Z_{\beta}$ in $\nu \in L^2_0(\mathbb{T}^d)$ (since $L^2_0(\mathbb{T}^d)$ is an Hilbert space, this is a usual gradient). The chain rule yields that $Z_{\beta}$ is a solution on $(0,\infty)\times \mptd \times L^2_0(\mathbb{T}^d)$ of 
\begin{equation}
\begin{aligned}
&\partial_t Z_{\beta} + \left \langle - \text{div}\left(  D_{pp}H(\cdot, \nabla_x U(\cdot,µ))\nabla_x  \frac{\delta Z_{\beta}}{\delta \nu}(t,µ,\nu,\cdot)  µ \right) , \frac{\delta W}{\delta µ}(t,µ,\nu)  \right \rangle  \\
&+ \left \langle - \text{div}\left(D_pH(\cdot, \nabla U(\cdot,µ))µ\right) - \sigma \Delta µ , \frac{\delta Z_{\beta}}{\delta µ}\right \rangle + \lambda \left(   Z_{\beta} - Z_{\beta}(t,\mathcal{T}µ,\mathcal{T}\nu)   \right)\\
& - \left \langle \Delta \frac{\delta Z_{\beta}}{\delta \nu}(t,µ,\nu,\cdot) - D_pH(\cdot,\nabla U)\nabla  \frac{\delta Z_{\beta}}{\delta \nu}(t,µ,\nu,\cdot), \nu\right \rangle\\
=&\left\langle \nu \bigg| \frac{\delta f}{\delta m}(\cdot,µ,\cdot) \bigg| \nu  \right \rangle + \left \langle - \text{div}\left(  D_{pp}H(\cdot, \nabla_x U(\cdot,µ))\nabla_x   \frac{\delta W}{\delta µ}(t,µ,\nu,\cdot)  µ \right) , \frac{\delta W}{\delta µ}(t,µ,\nu)  \right \rangle \\
&- \left \langle - \sigma \Delta \frac{\delta W}{\delta µ} - \text{div}\left(D_pH(\cdot,\nabla U)\frac{\delta W}{\delta µ}\right),\nu    \right \rangle\\
& - \left \langle\sigma \Delta  \frac{\delta W}{\delta µ}(t,µ,\nu,\cdot) - D_pH(\cdot,\nabla U)\nabla   \frac{\delta W}{\delta µ}(t,µ,\nu,\cdot), \nu\right \rangle\\
&-2\beta \left \langle\nu \bigg| \frac{\delta f}{\delta m}(\cdot,µ,\cdot) \bigg| \frac{\delta W}{\delta µ} \right \rangle + 2\beta \left \langle - \sigma \Delta \frac{\delta W}{\delta µ} - \text{div}\left(D_pH(\cdot,\nabla U)\frac{\delta W}{\delta µ}\right),\frac{\delta W}{\delta µ}    \right \rangle\\
& + \beta \lambda \left( \left\| \frac{\delta W}{\delta µ} \right\|^2 + \left\| \frac{\delta W}{\delta µ} (t,\mathcal{T}µ,\mathcal{T}\nu)\right\|^2 - 2 \left \langle \mathcal{T}\frac{\delta W}{\delta µ} , \frac{\delta W}{\delta µ} (t,\mathcal{T}µ,\mathcal{T}\nu)\right\rangle \right) - \frac{d \beta}{dt}  \left\| \frac{\delta W}{\delta µ} \right\|^2.
\end{aligned}
\end{equation}
The convexity of $H$ and calculus on the term in $\lambda$ yields
\begin{equation}\label{eqzbeta}
\begin{aligned}
&\partial_t Z_{\beta} + \left \langle - \text{div}\left(  D_{pp}H(\cdot, \nabla_x U(\cdot,µ))\nabla_x  \frac{\delta Z_{\beta}}{\delta \nu}(t,µ,\nu,\cdot)  µ \right) , \frac{\delta W}{\delta µ}(t,µ,\nu)  \right \rangle  \\
&+ \left \langle - \text{div}\left(D_pH(\cdot, \nabla U(\cdot,µ))µ\right) - \sigma \Delta µ , \frac{\delta Z_{\beta}}{\delta µ}\right \rangle + \lambda \left(   Z_{\beta} - Z_{\beta}(t,\mathcal{T}µ,\mathcal{T}\nu)   \right)\\
& - \left \langle \Delta \frac{\delta Z_{\beta}}{\delta \nu}(t,µ,\nu,\cdot) - D_pH(\cdot,\nabla U)\nabla  \frac{\delta Z_{\beta}}{\delta \nu}(t,µ,\nu,\cdot), \nu\right \rangle\\
\geq&\left\langle \nu \bigg| \frac{\delta f}{\delta m}(\cdot,µ,\cdot) \bigg| \nu  \right \rangle -2\beta \left \langle\nu \bigg| \frac{\delta f}{\delta m}(\cdot,µ,\cdot) \bigg| \frac{\delta W}{\delta µ} \right \rangle  + \beta \lambda \left( \left\| \frac{\delta W}{\delta µ} \right\|^2 - \left\| \mathcal{T}\frac{\delta W}{\delta µ}\right\|^2  \right)  \\
&+ 2\beta \left \langle - \sigma \Delta \frac{\delta W}{\delta µ} - \text{div}\left(D_pH(\cdot,\nabla U)\frac{\delta W}{\delta µ}\right),\frac{\delta W}{\delta µ}    \right \rangle - \frac{d \beta}{dt}  \left\| \frac{\delta W}{\delta µ} \right\|^2.
\end{aligned}
\end{equation}
Remark that
\begin{equation}
\left \langle - \sigma \Delta \frac{\delta W}{\delta µ} - \text{div}\left(D_pH(\cdot,\nabla U)\frac{\delta W}{\delta µ}\right),\frac{\delta W}{\delta µ}    \right \rangle = \sigma \left\| \nabla_x \frac{\delta W}{\delta µ} \right\|^2 + \left\langle D_pH(\cdot,\nabla_x U)\frac{\delta W}{\delta µ},\nabla_x \frac{\delta W}{\delta µ} \right\rangle
\end{equation}
Hence, using the assumption on $f$, we deduce that 
\begin{equation}
\begin{aligned}
&\left\langle \nu \bigg| \frac{\delta f}{\delta m}(\cdot,µ,\cdot) \bigg| \nu  \right \rangle -2\beta \left \langle\nu \bigg| \frac{\delta f}{\delta m}(\cdot,µ,\cdot) \bigg| \frac{\delta W}{\delta µ} \right \rangle  + \beta \lambda \left( \left\| \frac{\delta W}{\delta µ} \right\|^2 - \left\| \mathcal{T}\frac{\delta W}{\delta µ}\right\|^2  \right) - \frac{d \beta}{dt}  \left\| \frac{\delta W}{\delta µ} \right\|^2\\
& + \left \langle - \sigma \Delta \frac{\delta W}{\delta µ} - \text{div}\left(D_pH(\cdot,\nabla U)\frac{\delta W}{\delta µ}\right),\frac{\delta W}{\delta µ}    \right \rangle\\
&\geq \left(-\alpha^{-2}\beta^2 - \frac{\|D_pH\|_\infty}{2\sigma}\beta+ \lambda\left( 1 - \left\|\mathcal{T}\right\|_{\mathcal{L}(L^2)}\right)\beta - \frac{d\beta}{dt}\right) \left\| \frac{\delta W}{\delta µ} \right\|^2.
\end{aligned}
\end{equation}
From the assumption on $U_0$, we deduce that for any $\beta(0) \in (0,\alpha)$, $Z_{\beta}$ is non-negative at $t = 0$. Hence we deduce that there exists $\beta$ such that \begin{itemize}
\item $\beta$ is defined on $[0,t_f]$.
\item The right hand side of \eqref{eqzbeta} is non negative for $t \in (0,t_f)$
\item  $Z_{\beta}$ is non-negative at $t = 0$
\item There exists $C > 0$ depending only on $\alpha, \lambda,\sigma, H$ and $\mathcal{T}$ such that for $t\in (0,t_f)$, $\beta(t) \geq C^{-1}$.
\end{itemize}
We shall now make use of a comparison principle to prove that $Z_{\beta}$ is non negative for $t \in [0,t_f]$. Assume it is not the case, then there exists $\kappa > 0$, such that there exists, $t \in (0,t_f], µ \in \mptd, \nu \in L^2_0(\mathbb{T}^d)$ such that
\begin{equation}
Z_{\beta}(t,µ,\nu) \leq -\kappa t < 0.
\end{equation}
Consider now $t^*$ defined as
\begin{equation}
t^* = \text{argmin} \{t \in (0,t_f], \exists µ \in \mptd, \nu \in L^2_0(\mathbb{T}^d), Z_{\beta}(t,µ,\nu) < -\kappa t\}.
\end{equation}
Since $\mptd$ is compact, and $Z_{\beta}$ is quadratic in $\nu$, there exists $µ^* \in \mptd, \nu^* \in L^2_0(\mathbb{T}^d)$ such that 
\begin{equation}
\begin{cases}
Z_{\beta}(t^*,µ^*,\nu^*) \leq -\kappa t^*,\\
\partial_t Z_{\beta}(t^*,µ^*,\nu^*) \leq -\kappa,\\
 \frac{\delta Z_{\beta}}{\delta \nu}(t^*,µ^*,\nu^*) = 0,\\
 µ \to Z_{\beta}(t^*,µ,\nu^*) \text{ reaches its minimum at } µ^*.
\end{cases}
\end{equation}
Hence evaluating \eqref{eqzbeta} at $(t^*,µ^*,\nu^*)$ yields
\begin{equation}
-\kappa \geq 0,
\end{equation}
which is a contradiction, thus $Z_{\beta} \geq 0$.\\

Rewriting the non-negativity of $Z_{\beta}$, we obtain for any $t \in (0,t_f), \nu \in L^2_0, µ \in \mptd$
\begin{equation}
\left\| \left \langle \dmU(\cdot,µ), \nu\right \rangle \right \|_{L^2}^2 \leq C \left\langle \nu \left| \dmU(t,\cdot,µ,\cdot)\right|\nu\right\rangle.
\end{equation}
From Cauchy-Schwarz inequality we deduce that 
\begin{equation}
\left\| \left \langle \dmU(\cdot,µ), \nu\right \rangle \right \|_{L^2} \leq C \left\| \nu \right\|_{L^2}.
\end{equation}
The \color{black}required \color{black}estimate then easily follows. The fact that, when $\mathcal{T}$ is non expansive for the $L^2$ norm $C$ does not depend on $\lambda$ nor $\mathcal{T}$, can be easily observed at the level of \eqref{eqzbeta}.
\end{proof}
\begin{Rem}
Let us insist on the fact that the a priori estimate we just presented is also valid for the master equations \eqref{me2p} and \eqref{mecp}, the proofs of these facts follow exactly the same argument as the one we presented.
\end{Rem}

\subsection{Existence of solutions of first order master equations with common noise}
As we already said, the question of existence of solutions of the previous master equations is not the main interest of this paper. However, in this section, we take some time to explain how the previous a priori estimate is helpful to establish the existence of a monotone solution of \eqref{met} under some additional assumptions on the jump operator $\mathcal{T}$. We also explain briefly afterwards how such a result can be generalized.

\begin{Theorem}\label{thm:existt}
Assume that $\mathcal{T}$ is given by \eqref{defT} for some smooth non negative function $K$ and that the assumption of Theorem \ref{existclassic} and Proposition \ref{prop:est} are satisfied. Then there exists a (unique) monotone solution of \eqref{met} in the sense of definition \ref{deftt}.
\end{Theorem}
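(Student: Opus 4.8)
The plan is to produce a monotone solution of \eqref{met} as the limit of a Picard iteration in which the common-jump term is frozen and treated as an exogenous coupling, so that each step reduces to a jump-free master equation of the type \eqref{me}, which Theorem \ref{existclassic} can solve. I would start from $U^0$, the classical solution of \eqref{me} given by Theorem \ref{existclassic}, and, given $U^n$, define $U^{n+1}$ as the classical solution of the master equation obtained from \eqref{me} by adding the zeroth-order term $\lambda U^{n+1}$ on the left-hand side and the coupling $g^n(x,m):= f(x,m) + \lambda\,\mathcal{T}^*(U^n(t,\cdot,\mathcal{T}(m)))(x)$ on the right-hand side, keeping the initial condition $U_0$. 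At a fixed point $U^n = U^{n+1} = U$ this is exactly \eqref{met}, and the extra discount $\lambda U$ only amounts to a zeroth-order term in the underlying system \eqref{MFGsystem}, so that Theorem \ref{existclassic} applies after a routine extension. A classical solution is automatically a monotone solution, so the remaining work is convergence of the scheme.

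Three structural facts make each step legitimate. First, since $K$ is smooth with $\int_{\mathbb{T}^d} K(x,y)\,dx = 1$, the coupling $g^n$ inherits the spatial and measure-derivative regularity demanded by Theorem \ref{existclassic}: writing $\mathcal{T}^*(U^n(\cdot,\mathcal{T}m))(x) = \int_{\mathbb{T}^d} K(z,x) U^n(z,\mathcal{T}m)\,dz$ and using that $\mathcal{T}m$ is linear in $m$, the kernel $\frac{\delta g^n}{\delta m}$ is a double convolution against $K$ in both the state and the derivative variable, hence smooth there. Second, the jump term preserves monotonicity: by linearity of $\mathcal{T}$, $\langle \mathcal{T}^*(U^n(\cdot,\mathcal{T}\mu)) - \mathcal{T}^*(U^n(\cdot,\mathcal{T}\nu)), \mu - \nu\rangle = \langle U^n(\cdot,\mathcal{T}\mu) - U^n(\cdot,\mathcal{T}\nu), \mathcal{T}\mu - \mathcal{T}\nu\rangle \geq 0$ whenever $U^n$ is monotone (and $\mathcal{T}\mu,\mathcal{T}\nu \in \mptd$), so each $g^n$ is monotone and Hypothesis \ref{hypmon} is maintained along the iteration. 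Third, $\mathcal{T}^*$ is non-expansive for the sup norm, again because $\int_{\mathbb{T}^d} K(x,y)\,dx = 1$.

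The linchpin is a uniform-in-$n$ control of the iterates, obtained by running the argument of Proposition \ref{prop:est} at each step. The strong-monotonicity constant of $g^n$ is bounded below by the constant $\alpha$ coming from $f$, the jump contribution $\langle \mathcal{T}\nu \,|\, \frac{\delta U^n}{\delta m}(\cdot,\mathcal{T}m,\cdot) \,|\, \mathcal{T}\nu\rangle$ being non-negative by the first-order monotonicity of $U^n$; hence the estimate \eqref{aprioriest} holds for $U^{n+1}$ with a constant depending only on $\alpha$, $\lambda$ and $\mathcal{T}$, crucially independent of $n$, and its proof also shows that $U^{n+1}$ is itself first-order monotone, which closes the induction. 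This non-recursive bound on $\frac{\delta U^{n+1}}{\delta m}$ makes the $d_1$-Lipschitz-in-$m$ modulus of $U^{n+1}$, and therefore the data norms of $g^{n+1}$, uniform in $n$; feeding this into Proposition \ref{holderU} and Remark \ref{extensionlip} yields uniform spatial $C^{2+\alpha}$ bounds together with a uniform Lipschitz modulus of $U^n$ in $(t,x,m)$. Interpolating between the spatial bound and the value modulus, the family $(U^n)$ is relatively compact in $\mathcal{B}_t$.

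Passing to a subsequential limit $U$ in $\mathcal{B}_t$, the coupling $g^n \to f + \lambda\,\mathcal{T}^*(U(\cdot,\mathcal{T}m))$ uniformly, so by the stability of monotone solutions (the analogue of Proposition \ref{stabtime} for \eqref{met}, with the coupling varying) the limit $U$ is a monotone solution of \eqref{met}. Since monotone solutions of \eqref{met} are unique under Hypotheses \ref{hypmon} and \ref{hypmon+} by the uniqueness result established just above, every subsequential limit coincides and the whole sequence converges to it. I expect the main obstacle to be exactly this uniform regularity step: because $g^n$ feeds $\frac{\delta U^n}{\delta m}$ back into the data, a naive application of Proposition \ref{holderU} would let the Lipschitz-in-$m$ constants grow geometrically with $\lambda\|\mathcal{T}\|$; it is precisely the strong-monotonicity a priori bound of Proposition \ref{prop:est}, whose constant does not depend on the iteration, that breaks this recursion, while first-order monotonicity is propagated inductively so as to keep the hypotheses of that a priori estimate in force throughout.
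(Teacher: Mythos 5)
Your overall architecture (freeze the jump term, solve the resulting jump-free equation via Theorem \ref{existclassic}, and use Propositions \ref{holderU}, \ref{prop:est} and \ref{stabtime} to pass to the limit) matches the paper's, but your fixed-point mechanism --- Picard iteration --- does not, and it is exactly where the argument breaks, in two places. First, the uniform-in-$n$ estimate: you claim that $g^n := f + \lambda\,\mathcal{T}^*(U^n(t,\cdot,\mathcal{T}(m)))$ satisfies the strong monotonicity condition \eqref{strongmon1} with a constant independent of $n$ because the jump contribution $\lambda\langle\mathcal{T}\nu\,|\,\frac{\delta U^n}{\delta m}(\cdot,\mathcal{T}m,\cdot)\,|\,\mathcal{T}\nu\rangle$ is non-negative. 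But \eqref{strongmon1} is not a non-negativity statement: its right-hand side $\alpha\|\langle\frac{\delta g^n}{\delta m}(\cdot,\mu),\nu\rangle\|_{L^2}^2$ contains the jump part of the derivative as well, and an inequality $A\geq\alpha\|a\|^2$ together with $B\geq 0$ does not yield $A+B\geq\alpha\|a+b\|^2$ (take scalars $a=A=1$, $\alpha=1/2$, $b=B=100$). To close the induction you would need the quantitative conclusion of Proposition \ref{prop:est} for $U^n$ itself and then track how the constant degrades through the map $U^n\mapsto U^{n+1}$; this is a Riccati-type recursion, and keeping its constant bounded below uniformly in $n$ and $t$ is precisely the hard point, which your proposal asserts rather than proves. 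Note also that Proposition \ref{prop:est} is proved for solutions of \eqref{met}, where the jump term is self-consistent: the proof exploits the cancellation $\lambda\left(Z_\beta - Z_\beta(t,\mathcal{T}\mu,\mathcal{T}\nu)\right)$ and a completed square in $\frac{\delta W}{\delta \mu}$. Once the jump term is frozen at $U^n$, that structure is lost; the frozen coupling enters exactly like an exogenous $f$ and must verify \eqref{strongmon1} on its own, which is the unproved claim above.

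Second, and independently: even granting uniform bounds, Picard iteration plus compactness does not produce a fixed point. If $U^{n_k}\to U$ in $\mathcal{B}_t$, then by the stability argument the functions $U^{n_k+1}=\mathcal{A}(U^{n_k})$ converge, along a further subsequence, to a monotone solution of the equation with coupling frozen at $U$, i.e.\ to $\mathcal{A}(U)$; but the indices $n_k+1$ need not belong to your subsequence, so all you obtain is some $V=\mathcal{A}(U)$ with no reason that $V=U$. The uniqueness theorem for \eqref{met} cannot help, since neither $U$ nor $V$ is known at that stage to solve \eqref{met}. Convergence of Picard iterates would require a contraction estimate or a monotone ordering of the scheme, neither of which is available. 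The paper's proof sidesteps both issues by applying a Schaefer (Leray--Schauder) type fixed-point theorem to the same operator $\mathcal{A}$: one needs only (i) compactness of $\mathcal{A}$, coming from the smoothing of the kernel $K$ together with Theorem \ref{existmon}; (ii) continuity of $\mathcal{A}$ for the topology of $\mathcal{B}_t$, from Proposition \ref{stabtime}; and (iii) boundedness of the set $\{U :\ \theta\mathcal{A}(U)=U \text{ for some }\theta\in[0,1]\}$. Point (iii) is where Proposition \ref{prop:est} enters, and it applies directly because any such $U$ solves a (scaled) version of \eqref{met} in which the jump term is self-consistent --- so no uniform estimate along an iteration is ever required.
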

\begin{proof}
Let us define the operator $\mathcal{A}$ which associates to a function $V$ the solution $U:= \mathcal{A}(V)$ of the master equation
\begin{equation}
\begin{aligned}
\partial_t& U - \sigma \Delta U + H(x,\nabla_x U) - \left\langle \dmU(x,m,\cdot), \text{div}\left(D_pH(\cdot, \nabla U(\cdot,m))m\right)\right\rangle\\
&+ \lambda U  - \sigma\left\langle \dmU(x,m,\cdot), \Delta m \right\rangle = f(x,m) + \lambda \mathcal{T}^*(V(t,\cdot,\mathcal{T}(m)))\text{ in } (0,\infty)\times\mathbb{T}^d\times \mptd,\\
&U(0,x,m) = U_0(x,m) \text{ in } \mathbb{T}^d\times \mptd.
\end{aligned}
\end{equation}
Considering $\mathcal{T}^*(V(t,\cdot,\mathcal{T}(m)))$ in the same way as $f$, the previous equation falls in the scope of the previous analysis (Theorem \ref{existclassic} and Proposition \ref{existmon}) except for the presence of the term $\lambda U$ which does not play any sort of role in the previous result. Thus, we admit that they extend to this situation.

From this we deduce that the operator $\mathcal{A}$ is well defined from the set of functions $V$ which are monotone and satisfies the assumptions (for $f$) of Theorem \ref{existclassic}. Moreover, if $V(t)$ is $C$ Lipschitz, uniformly in $t$, seen as an operator from $L^2(\mathbb{T}^d)$ into itself, then from the regularity of the kernel $K$ that we assumed (the kernel of the operator $\mathcal{T}$), then $\mathcal{T}^*(V(t,\cdot,\mathcal{T}(\cdot)))$ is in fact (uniformly in $t$) Lipschitz continuous from $\mptd$ to $\mathcal{C}^2(\mathbb{T}^d)$. From this we deduce that the operator $\mathcal{A}$ is compact (Proposition \ref{existmon}).

Moreover, the a priori estimate of Proposition \ref{prop:est} yields that $\mathcal{E} := \{U, \exists \theta \in [0,1], \theta \mathcal{A}(U) = U\}$ is bounded in the set of Lipschitz continuous functions $[0,t_f]\times L^2(\mathbb{T}^d) \to L^2(\mathbb{T}^d)$ for any $t_f > 0$. Hence, from the assumptions we made on $\mathcal{T}$, we know that $\mathcal{T}(\mathcal{E})$ is such that it uniformly satisfies the assumptions on $f$ of Proposition \ref{holderU}. Thus, thanks to Proposition \ref{holderU}, it is a uniformly continuous family of functions over $[0,t_f]\times \mathbb{T}^d\times \mptd$. Hence it is bounded in $\mathcal{B}_t$ (since the uniform regularity of order $2$ in the space variable $x$ is also true). Finally let us recall that from Proposition \ref{stabtime}, we know that $\mathcal{A}$ is continuous for the topology of $\mathcal{B}_t$. Hence we deduce that $\mathcal{A}$ has a fixed point.
\end{proof}
Let us comment on extensions of this result. First, concerning master equations involving additional stochastic parameters, it is clear that the case of \eqref{me2p} can be treated in a similar fashion. The case of \eqref{mecp} requires an additional a priori estimate on the regularity of the solution with respect to $p$.

Moreover, it also seems possible to extend the previous result to more general jump operators $\mathcal{T}$. We now briefly expand on how this should be possible. Let us recall that the a priori estimate of Proposition \ref{prop:est} only requires the $L^2$ Lipschitz continuity of $\mathcal{T}$ whereas the assumptions of Theorem \ref{thm:existt} require strong regularization properties of $\mathcal{T}$. Let us now recall that obviously the presence of the i.i.d. noise in the MFG induces a regularization properties of the underlying Fokker-Planck equation. In other words, if at a time $t$ the repartition of players is characterized by a measure $m(t)$ and if all the players play the Nash equilibrium, then instantly the measure describing the repartition of players has a density in $L^2(\mathbb{T}^d)$ (even much more regular than $L^2$ under the standing assumptions in fact). Then we could use the $L^2$ a priori estimate of Proposition \ref{prop:est} to gain additional regularity for solutions of \eqref{met} with bootstrapping techniques and ideas from Proposition \ref{holderU} (or more precisely from Section 3 in \citep{cardaliaguet2019master}).

\subsection{Asymptotic differential operators from common jumps}
Another interesting features of the kind of common noise of Section \ref{sec:commonjumps} is that one can obtained differential terms in the $m$ variable by taking limits of such noises. We briefly extend the results and ideas from \citep{bertucci2019some} to this continuous state space case. Let us first mention, that the particular form of the operator $\mathcal{T}$ was little used in the previous section. We mainly used its linearity (more precisely the existence of an adjoint...). Hence, we shall only assume here that $\mathcal{T}$ is a linear operator from the space of measures on $\mathbb{T}^d$ into itself which maps $\mptd$ into itself. Let us remark that we do not establish precise result of convergence in this section but rather present the natural asymptotics one can obtain from common jumps.\\

\subsubsection{General case}
In a first time, let us assume that $\mathcal{T}$ is of the form
\begin{equation}
\mathcal{T}= Id + \lambda^{-1} \mathcal{S},
\end{equation}
for a linear operator $\mathcal{S}$ which maps $\mptd$ into the set of measure of mass zero. The previous study can be extended for such an operator $\mathcal{T}$, in particular the associated master equation is still \eqref{met}. Observe that, at least formally,
\begin{equation}
\lambda (U(t,x,m) - \mathcal{T}^*U(t,x,\mathcal{T}(m))) \underset{\lambda \to \infty}{\longrightarrow} - \left\langle \dmU(t,x,m,\cdot), \mathcal{S}(m)\right\rangle - \mathcal{S}^*(U(t,x,m)).
\end{equation}
As one can guess, the limit master equation 
\begin{equation}\label{metlim1}
\begin{aligned}
\partial_t& U - \sigma \Delta U + H(x,\nabla_x U) - \left\langle \dmU(x,m,\cdot), \text{div}\left(D_pH(\cdot, \nabla U(\cdot,m))m\right)\right\rangle - \mathcal{S}^*(U(t,x,m)))\\
&- \left\langle \dmU(t,x,m,\cdot), \mathcal{S}(m)\right\rangle  - \sigma\left\langle \dmU(x,m,\cdot), \Delta m \right\rangle = f(x,m) \text{ in } (0,\infty)\times\mathbb{T}^d\times \mptd,\\
&U(0,x,m) = U_0(x,m) \text{ in } \mathbb{T}^d\times \mptd,
\end{aligned}
\end{equation}
also propagates monotonicity and is thus adequate for a notion of monotone solution which can be stated in this situation with 
\begin{Def}
A function $U \in \mathcal{B}_t$ is a monotone solution of \eqref{met}  if \begin{itemize} \item For any $\mathcal{C}^2$ function $\phi : \mathbb{T}^d \to \mathbb{R}$, for any measure $\nu \in \mmtd$, for any smooth function $\vartheta : [0,\infty) \to \mathbb{R}$, any $T > 0$ and any point $(t_0,m_0) \in (0,T]\times\mptd$ of strict minimum of $(t,m) \to \langle U(t,\cdot,m)- \phi, m- \nu \rangle - \vartheta(t)$ on $[0,T]\times \mptd$, the following holds
\begin{equation}
\begin{aligned}
\frac{d \vartheta}{dt}(t_0)& +  \langle - \sigma \Delta U + H(\cdot,\nabla_x U) , m_0 - \nu \rangle - \langle \mathcal{S}^*\phi, m_0\rangle + \langle \mathcal{S}^*U, \nu\rangle\\
 \geq& \langle f(\cdot,m_0),m_0 - \nu\rangle-  \langle U - \phi, \text{div}(D_pH(\nabla_x U)m_0)\rangle  - \sigma \langle \Delta(U- \phi),m_0\rangle.
\end{aligned}
\end{equation}
\item The initial condition holds
\begin{equation}
U(0,\cdot,\cdot) = U_0(\cdot,\cdot).
\end{equation}
\end{itemize}
\end{Def}
We do not detail another uniqueness result for such a case. Following \citep{bertucci2019some}, we could also obtained second order terms in a similar fashion. Because we detail such a fact on a specific example below, we do not focus on this asymptotic right now.

\subsubsection{Asymptotic terms associated to translations}\label{sec:translations}
An important case of common jumps is the one in which $\mathcal{T}(m)$ is the image measure of $m$ by some application $B : \mathbb{T}^d \to \mathbb{T}^d$. In such a case, at the random times at which the jumps occur, all the players in $x$ are transported to $B(x)$. Let us assume that $B$ is of the form $ B = Id + \lambda^{-1}\tilde{B}$ for some smooth $\tilde{B} : \mathbb{T}^d \to \mathbb{T}^d$. In this context, formally, one obtain that 
\begin{equation}
\lambda (U(t,x,m) - \mathcal{T}^*U(t,x,\mathcal{T}(m))) \underset{\lambda \to \infty}{\longrightarrow} - \langle D_mU(t,x,m,\cdot)\tilde{B}(\cdot), m\rangle - \tilde{B}(x) \cdot \nabla_x U(t,x,m).
\end{equation}
Following \citep{bertucci2019some}, the addition of several terms modeling common jumps does not raise any particular issue and can be treated in a similar way. By doing so, we are able to obtain higher order asymptotic terms in the master equation as we now explain. Let us define $\mathcal{T}_+(m)$ the image measure of $m$ by $ Id + \lambda^{-1/2}\tilde{B}$ and $\mathcal{T}_-(m)$ the image measure of $m$ by $ Id - \lambda^{-1/2}\tilde{B}$. In such a situation, we can remark the following asymptotic
\begin{equation}\label{asymptotic2}
\begin{aligned}
\lambda(2U(t,x,m) &- \mathcal{T}^*_+U(t,x,\mathcal{T}_+(m)) - \mathcal{T}^*_-U(t,x,\mathcal{T}_-(m)))\\
 \underset{\lambda \to \infty}{\longrightarrow}& - \tilde{B}(x) \cdot D^2_{xx} U(t,x,m)\cdot \tilde{B}(x) - 2 \tilde{B}(x)\cdot \nabla_x \langle \tilde{B}(\cdot)D_m U(t,x,m,\cdot),m\rangle\\
&- \int_{\mathbb{T}^d} \int_{\mathbb{T}^d} \tilde{B}(y)D^2_{mm}U(t,x,m,y,z)\tilde{B}(z)m(dy)m(dz)\\
 &- \langle \tilde{B}(\cdot)D^2_{yy}\dmU(t,x,m,\cdot)\tilde{B}(\cdot),m\rangle,
\end{aligned}
\end{equation}
where $D^2_{mm} := D_m(D_mU)$.
This gives us another way to derive the master equation with common noise studied in \citep{cardaliaguet2019master}, which is written below as equation \eqref{me2}. Indeed to recover \eqref{me2}, one has to sum such terms as in \eqref{asymptotic2} for the constant maps $\tilde{B}_i(y) = (0,...,0,\sqrt{\beta}y_i,0,...0)$ for $i = 1$ to $i= d$. Even though the proper study of such an asymptotic is not the subject of this paper (and thus we do not it here), we believe that such an approach can be insightful in many ways for the study of \eqref{me2}. Moreover this justifies in some sense the generality of the noise studied in Section \ref{sec:commonjumps}.

\section{Master equations of second order}\label{sec:secondorder}
We now turn to the case of second order master equations such as the one studied in \citep{cardaliaguet2019master}. We are here interested in the equation
\begin{equation}\label{me2}
\begin{aligned}
\partial_t U& - (\sigma  + \beta)\Delta U + H(x,\nabla_x U) - \left\langle \dmU(x,m,\cdot), \text{div}\left(D_pH(\cdot, \nabla U(\cdot,m))m\right)\right\rangle\\
& - (\sigma+ \beta)\left\langle \dmU(x,m,\cdot), \Delta m \right\rangle  + 2\beta \nabla_x \cdot \left\langle \dmU(x,m,\cdot),\nabla m\right\rangle\\
&- \beta \left \langle \nabla m \bigg| \frac{\delta^2 U}{\delta m ^2}(x,m,\cdot,\cdot) \bigg|\nabla m\right\rangle = f(x,m),\\
U(0&,x,m) = U_0(x,m).
\end{aligned}
\end{equation}
Let us precise that the last term of the left hand side of \eqref{me2} is understood as $-\beta \sum_{i=1}^d \langle \partial_i m| \frac{\delta^2 U}{\delta m^2}|\partial_i m\rangle.$ As we did for \eqref{me}, we now state a uniqueness result for \eqref{me2} for which we give a different proof than in \citep{cardaliaguet2019master}, where it originates from. 
\begin{Prop}
Under Hypothesis \ref{hypmon}, there exists at most one classical solution of \eqref{me2}.
\end{Prop}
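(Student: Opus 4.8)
The plan is to adapt the monotonicity argument of Proposition \ref{uniqsmooth}, together with its time-dependent counterpart Theorem \ref{uniqtime}, to the second order setting, the only genuinely new feature being the treatment of the terms carrying $\frac{\delta^2 U}{\delta m^2}$. Given two classical solutions $U$ and $V$ of \eqref{me2}, I would introduce
\[
W(t,\mu,\nu) = \langle U(t,\cdot,\mu) - V(t,\cdot,\nu), \mu - \nu\rangle ,
\]
which is smooth on $[0,\infty)\times\mptd^2$, and note that $W(0,\cdot,\cdot)\geq 0$ because $U(0,\cdot)=V(0,\cdot)=U_0$ and $U_0$ is monotone under Hypothesis \ref{hypmon}. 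The goal is to propagate this non-negativity forward in time. Since the solutions are classical I would work with a single time variable (no doubling of the time variable is needed here, in contrast with Theorem \ref{uniqtime}) and argue by a space--time maximum principle: on the compact set $[0,T]\times\mptd^2$ the minimum of $W$ (or, to gain strictness, of $W+\eta t$ with $\eta>0$ small) is attained, and I would show the equation forbids it from being negative.

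First I would write down the equation satisfied by $W$, using the two solutions together with \eqref{reluw} and its analogue for $V$. The first order part of this computation is identical to the one in Proposition \ref{uniqsmooth}: evaluating at a point $(\mu^*,\nu^*)$ of minimum of $W(t,\cdot,\cdot)$, the first order conditions of Proposition \ref{firstcond} let one replace $\dmU-\frac{\delta V}{\delta m}$ in the transport terms by $U-V$, the convexity of $H$ turns the Hamiltonian contributions $\langle H(\cdot,\nabla_x U)-H(\cdot,\nabla_x V),\mu^*-\nu^*\rangle$ together with the drift terms into a favourably signed quantity, and the monotonicity of $f$ gives $\langle f(\cdot,\mu^*)-f(\cdot,\nu^*),\mu^*-\nu^*\rangle\geq 0$.

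The crux is the second order contribution, namely $2\beta\nabla_x\cdot\langle\dmU,\nabla m\rangle-\beta\langle\nabla m|\frac{\delta^2 U}{\delta m^2}|\nabla m\rangle$ and its $V$-counterpart. My reading is that, once assembled in the equation for $W$, these terms form the generator of the degenerate diffusion on $\mptd^2$ in which both marginals $\mu$ and $\nu$ are pushed by the \emph{same} common Brownian vector field, the mixed contribution coming from $\frac{\delta^2 W}{\delta\mu\,\delta\nu}$; equivalently, after the first order reductions they collapse into a sign-definite quadratic form in $\nabla\mu,\nabla\nu$. The key point --- which is precisely the second order analogue of Proposition \ref{firstcond} that the paper alludes to but does not state --- is that the generator of a diffusion applied to a smooth function at a point of minimum is non-negative. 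Hence at $(\mu^*,\nu^*)$ this whole block contributes with the sign that preserves the inequality. Identifying these terms as such a generator and justifying the second order optimality condition on $\mptd^2$ (where a minimizer need not have a smooth density, so the pairings must be read after the integrations by parts that shift derivatives onto $\frac{\delta^2 U}{\delta m^2}$) is the step I expect to be the main obstacle.

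Combining the three contributions shows that at a space--time minimizer the equation forces $\partial_t W\geq 0$, while minimality in $t$ gives $\partial_t W\leq 0$; together with $W(0,\cdot,\cdot)\geq 0$ this rules out any negative minimum, so $W\geq 0$ on $[0,T]\times\mptd^2$ for every $T$. Since $W$ vanishes on the diagonal, testing it against mass-zero perturbations (exactly as in Proposition \ref{uniqsmooth}) shows that $U-V$ is orthogonal to the measures of mass zero, i.e. $U=V+c$ for some $c:[0,\infty)\times\mptd\to\mathbb{R}$ independent of $x$. To upgrade this to full uniqueness I would substitute $U=V+c$ into \eqref{me2}: all $x$-derivatives of $c$ vanish and $\nabla_x U=\nabla_x V$, so $c$ solves the linear equation $\partial_t c-\langle\frac{\delta c}{\delta m},\mathrm{div}(D_pH(\nabla_x U)m)+(\sigma+\beta)\Delta m\rangle-\beta\langle\nabla m|\frac{\delta^2 c}{\delta m^2}|\nabla m\rangle=0$ with $c(0,\cdot)=0$. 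A maximum principle on the compact space $\mptd$ --- again using the non-negativity of this generator at an extremum of $c$ --- then forces $c\equiv 0$, and therefore $U=V$.
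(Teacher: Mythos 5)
Your proposal is correct and takes essentially the same route as the paper's proof: the same function $W(t,\mu,\nu)=\langle U(t,\cdot,\mu)-V(t,\cdot,\nu),\mu-\nu\rangle$, the same reductions via Proposition \ref{firstcond}, convexity of $H$ and monotonicity of $f$, the identification of the $\beta$-block as the common-noise elliptic operator on $\mptd^2$ whose sign at a minimum closes the argument, and the same final step ($U=V+c$, then $c\equiv 0$ from the equation). The one obstacle you flag -- justifying the second-order optimality condition at a minimizer on $\mptd^2$ -- is exactly the point the paper resolves by a different device than a "second-order Proposition \ref{firstcond}": following Section \ref{sec:translations}, the $\beta$-terms are the limits as $h\to 0$ of the finite differences
\begin{equation*}
h^{-2}\left(2W(t_0,\mu_0,\nu_0)-W\left(t_0,T^h_{+i}\mu_0,T^h_{+i}\nu_0\right)-W\left(t_0,T^h_{-i}\mu_0,T^h_{-i}\nu_0\right)\right),
\end{equation*}
i.e. of common jumps translating both measures simultaneously; each such difference is trivially signed at a point of minimum of $W$, and the smoothness of $W$ (guaranteed since $U,V$ are classical) legitimates the passage to the limit, so no regularity of the minimizing measures is ever needed.
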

\begin{proof}
Let us consider $U$ and $V$ two smooth solutions and define $W$ by
\begin{equation}
W(t,µ,\nu) = \langle U(t,\cdot,\mu) - V(t,\cdot, \nu), µ - \nu\rangle.
\end{equation}
A simple computation yields that $W$ is a solution of
\begin{equation}
\begin{aligned}
\partial_t &W-  \langle (\sigma + \beta)\Delta_x (U-V) + H(\cdot,\nabla U) - H(\cdot,\nabla V), µ- \nu\rangle \\
&- \left\langle \frac{\delta W}{\delta µ}(µ,\nu), \text{div}\left(D_pH(\cdot, \nabla U(\cdot,µ))µ\right)\right\rangle - \left\langle \frac{\delta W}{\delta \nu}(µ,\nu), \text{div}\left(D_pH(\cdot, \nabla V(\cdot,\nu))\nu\right)\right\rangle\\
 &- (\sigma + \beta)\left\langle \frac{\delta W}{\delta µ}(µ,\nu), \Delta µ \right\rangle - (\sigma + \beta)\left\langle \frac{\delta W}{\delta \nu}(µ,\nu), \Delta \nu \right\rangle\\
&-\beta\left(    \left \langle \nabla µ \bigg| \frac{\delta^2 W}{\delta µ ^2}(µ,\nu,\cdot,\cdot) \bigg|\nabla µ\right\rangle +   \left \langle \nabla \nu \bigg| \frac{\delta^2 W}{\delta \nu ^2}(µ,\nu,\cdot,\cdot) \bigg|\nabla \nu\right\rangle - 2 \left \langle \nabla \nu \bigg| \frac{\delta^2 W}{\delta µ \delta \nu}(µ,\nu,\cdot,\cdot) \bigg|\nabla \mu\right\rangle \right)\\
=& \langle f(\cdot,µ) - f (\cdot,\nu),µ - \nu\rangle - \left\langle U(\cdot,µ) - V(\cdot,\nu), \text{div}\left(D_pH(\cdot, \nabla U(\cdot,µ))µ\right)\right\rangle\\
&- \left\langle V(\cdot,\nu) - U(\cdot,µ), \text{div}\left(D_pH(\cdot, \nabla V(\cdot,\nu))\nu\right)\right\rangle - (\sigma + \beta) \langle \Delta_x (U-V), µ- \nu\rangle.
\end{aligned}
\end{equation}
Once again, using the monotonicity of $f$ and the convexity of $H$, we obtain that the right hand side of the previous equation is non-negative. We now show how a comparison principle can extend this non-negativity of the right hand side to $W$. Assume that there exists $t>0,µ,\nu$ such that $W(t,µ,\nu) < 0$. Under this assumption, there exists $\kappa > 0$ such that $W(t,µ,\nu) + \kappa t < 0$. Consider now $t^*$ defined as
\begin{equation}
t^* = \text{argmin} \{t > 0, \exists µ,\nu \in \mptd, W(t,µ,\nu) + \kappa t < 0\},
\end{equation}
as well as $µ^*,\nu^* \in \mptd$ such that $W(t^*,µ^*,\nu^*) + \kappa t^* = 0 = \min_{µ,\nu} W(t^*,µ,\nu) + \kappa t^*$ (recall that $W$ is assumed to be smooth and $\mptd$ is compact). From the smoothness of $W$, we know that $\partial_t W(t^*,µ^*,\nu^*) \leq - \kappa$. Moreover, using the first order conditions recalled in Proposition \ref{firstcond}, and Hypothesis \ref{hypmon}, we deduce that
\begin{equation}\label{eq:intercomp}
\begin{aligned}
&\kappa \leq  -  \beta\left\langle \frac{\delta W}{\delta µ}(t^*,µ^*,\nu^*), \Delta µ^* \right\rangle - \beta\left\langle \frac{\delta W}{\delta \nu}(t^*,µ^*,\nu^*), \Delta \nu^* \right\rangle -\\
&-\beta   \left \langle \nabla µ^* \bigg| \frac{\delta^2 W}{\delta µ ^2}(t^*,µ^*,\nu^*,\cdot,\cdot) \bigg|\nabla µ^*\right\rangle +  \beta \left \langle \nabla \nu^* \bigg| \frac{\delta^2 W}{\delta \nu ^2}(t^*,µ^*,\nu^*,\cdot,\cdot) \bigg|\nabla \nu^*\right\rangle-\\
 &- 2 \beta\left \langle \nabla \nu^* \bigg| \frac{\delta^2 W}{\delta µ \delta \nu}(t^*,µ^*,\nu^*,\cdot,\cdot) \bigg|\nabla \mu^*\right\rangle.
\end{aligned}
\end{equation}
Let us now remark that by construction of $µ^*,\nu^*$, for any map $\mathcal{T} : \mptd^2 \to \mptd^2$
\begin{equation}\label{eq:intersign}
U(t^*,µ^*,\nu^*) - U(t^*,\mathcal{T}(µ^*,\nu^*)) \leq 0.
\end{equation} 
Hence, recalling the computation of section \ref{sec:translations} (which are here legit since $W$ is smooth), we deduce that the right hand side of \eqref{eq:intercomp} is non positive since it is the limit of a sum of terms of the form of the left hand side of \eqref{eq:intersign}, which are all non-positive. Thus we have in fact proven that $\kappa \leq 0$ which is a contradiction.\\

We deduce from the previous argument that $W$ is in fact non-negative. The rest of the proof follows exactly as the end of the proof of Theorem \ref{existclassic}.

\end{proof}

Observing the similarity between this equation and the first order one \eqref{me}, it is very tempting to formulate an adaptation of the definitions of monotone solutions we gave earlier. Two main difficulties arise at this point. The first one has to do with the fact that \eqref{me2} is of second order in the measure argument. As monotone solutions are reminiscent of viscosity solutions, similar technical difficulties naturally arise, we refer to \citep{crandall1992user} for more details on viscosity solutions of second order equations in a finite dimensional setting. The second difficulty has to do with the precise nature of \eqref{me2}. More or less, the definition of monotone solution we give consists in reformulating that $W : (t,µ) \to \langle U(t,µ) - \phi,µ - \nu\rangle$ is a sort of super solution of a certain PDE. The problem is here that in this second order case, the PDE satisfied by $W$ cannot be expressed solely in terms of $W$ and its derivatives, there is a term which involves directly the first order derivative of $U$ which cannot be rewritten with $W$. Indeed, if $U$ solves \eqref{me2} then $W$ defined above for $\phi \in \mathcal{C}^2, \nu \in \mptd$ satisfies
\begin{equation}\label{eqW2}
\begin{aligned}
\partial_t W& +  \langle - (\sigma + \beta) \Delta U + H(\cdot,\nabla_x U) - f(µ), µ - \nu \rangle - \left \langle \frac{\delta W}{\delta µ}, \text{div}(D_pH(\cdot,\nabla U)µ)\right \rangle\\
&+ \left \langle U - \phi, \text{div}(D_p H(\cdot, \nabla U)µ)\right\rangle - (\sigma + \beta)\left \langle \frac{\delta W }{\delta µ}, \Delta µ\right \rangle + (\sigma + \beta)\langle U - \phi, \Delta µ\rangle\\
&+ 2 \beta \left\langle\nabla \nu \bigg| \dmU (\cdot,µ,\cdot) \bigg| \nabla µ\right \rangle -\beta  \left \langle \nabla µ \bigg| \frac{\delta^2 W}{\delta µ ^2}(\cdot,\cdot) \bigg|\nabla µ\right\rangle = 0.
\end{aligned}
\end{equation}
A way to define monotone solutions could be to ask $W$ to be a super solution of this equation. If we do so, the previous proof is still valid, at least formally. The challenge here is that the second to last term cannot be written without using derivative (with respect to the measure variable) of $U$. This difficulty could be overcome by asking the couple $(U,W)$ to be a super solution of this equation. This idea is probably the most general one, for instance in the finite state space case, it is immediate to verify that it yields uniqueness of solutions by proving the non-negativity of $\langle U - V, µ - \nu\rangle$ for $U$ and $V$ two monotone solutions, using the techniques of \citep{crandall1992user}. However, because the technical difficulties usually arising from viscosity solutions of second order are much more difficult to treat in this infinite dimensional setting we focus on a regime which is more regular than the one we adopted up to this point. Formally we restrict ourselves to $\mathcal{C}^{1}$ solution in the measure argument and we use the limit \eqref{asymptotic2} to express the second order derivatives. This discussion leads us to the following definition.
\begin{Def}\label{def:me2}
A function $U$ in $\mathcal{B}_t$, which is derivable with respect to the measure variable at every point, is a monotone solution of \eqref{me2} if 
\begin{itemize}
\item for any $\mathcal{C}^2$ functions $\phi : \mathbb{T}^d \to \mathbb{R}, \vartheta : (0,\infty) \to \mathbb{R}$, for any measure $\nu \in \mmtd$, $T> 0$, and $(t_0,m_0) \in (0,T]\times \mptd$ point of strict minimum of the function $W$ defined by $(t,m) \to \langle U(t,\cdot,m)- \phi, m- \nu \rangle - \vartheta(t)$ the following holds
\begin{equation}
\begin{aligned}
\frac{d \vartheta}{dt}(t_0)& +  \langle - \sigma \Delta U + H(\cdot,\nabla_x U) , m_0 - \nu \rangle +2 \beta \left\langle\nabla \nu \bigg| \dmU (\cdot,µ,\cdot) \bigg| \nabla µ\right \rangle \\
 & + \beta \liminf_{h \to 0}\sum_{i =1 }^dh^{-2}(2W(t_0,m_0) - W(t_0,T_{+i}^hm_0)- W(t_0,T_{-i}^hm_0))\\
 \geq& \langle f(\cdot,m_0),m_0 - \nu\rangle-  \langle U - \phi, \text{div}(D_pH(\nabla_x U)m_0)\rangle  - \sigma \langle \Delta(U- \phi),m_0\rangle.
\end{aligned}
\end{equation}
\item The initial condition holds
\begin{equation}
U(0,\cdot,\cdot) = U_0(\cdot,\cdot).
\end{equation}
\end{itemize}
\end{Def}
In the previous definition, we used the notation
\begin{equation}
\forall 1 \leq i \leq d,m \in \mptd,  T_{\pm i}^h m= (Id \pm h e_i)_{\#}m,
\end{equation}
where for $1 \leq i \leq d$, $e_i = (0,...,1,...,0)$ and the sole $1$ is in position $i$. For this notion of solution, we can establish the following result of uniqueness

\begin{Theorem}
Under Hypothesis \ref{hypmon}, two monotone solutions of \eqref{me2} in the sense of Definition \ref{def:me2} only differ by a function $c : [0,\infty)\times \mptd \to \mathbb{R}$. If such a solution $U$ exists, then $U(t)$ is monotone for all time $t \geq 0$. 
\end{Theorem}
\begin{Rem}
Once again, a similar developments as in Section \ref{sec:extensionuniqueness} could be carried on here.
\end{Rem}
\begin{proof}
Let us consider $U$ and $V$ two solutions, and define $W$ by
\begin{equation}
W(t,s,µ,\nu) = \langle U(t,\cdot,\mu) - V (s,\cdot,\nu), µ - \nu \rangle.
\end{equation}
We want to show that $W(t,t,\cdot,\cdot) \geq 0$ for all $t\geq 0$. Once again we argue by contradiction. Hence, we assume that there exists $t_*,\delta, \bar{\epsilon} > 0$, such that for all $\epsilon \in (0,\bar{\epsilon}), \alpha > 0, \phi, \psi \in \mathcal{C}^2$ such that $\|\phi\|_2 + \| \psi \|_2 \leq \epsilon$ and $\gamma_1,\gamma_2 \in (\frac{\bar{\epsilon}}{2}, \bar{\epsilon})$, 
\begin{equation}\label{infuniqt2}
\inf_{t,s \in [0,t_*], µ,\nu \in \mptd} \left\{ W(t,s,µ,\nu) + \langle \phi, µ\rangle + \langle \psi, \nu \rangle + \frac{1}{2 \alpha}(t-s)^2 + \gamma_1 t + \gamma_2 s \right\} \leq - \delta.
\end{equation}
From Lemma \ref{Stegall}, we know that there exists (for any value of $\alpha$) $\phi, \psi, \gamma_1$ and $\gamma_2$ such that $(t,s,µ,\nu) \to W(t,s,µ,\nu) + \langle \phi, µ\rangle + \langle \psi, \nu \rangle + \frac{1}{2 \alpha}(t-s)^2 + \gamma_1 t + \gamma_2 s$ has a strict minimum on $[0,t_*]^2 \times \mptd^2$ at $(t_0,s_0,\mu_0,s_0)$.\\

We now assume that $t_0, s_0 > 0$ (the case in which one of the two is zero is treated exactly as in Theorem \ref{uniqtime}).
Let us now remark the following 
\begin{equation}
\begin{aligned}
0 \geq& \liminf_{h \to 0}\sum_{i =1 }^dh^{-2}(2W(t_0,s_0,µ_0,\nu_0) - W(t_0,s_0,T_{+i}^hµ_0,T_{+i}^h\nu_0)- W(t_0,w_0,T_{-i}^hµ_0,T_{-i}^h\nu_0))\\
= &\liminf_{h \to 0}\sum_{i =1 }^dh^{-2}(2W(t_0,s_0,µ_0,\nu_0)- W(t_0,s_0,T_{+i}^hµ_0,\nu_0)- W(t_0,w_0,T_{-i}^hµ_0,\nu_0))\\
&+ \liminf_{h \to 0}\sum_{i =1 }^dh^{-2}(2W(t_0,s_0,µ_0,\nu_0)- W(t_0,s_0,µ_0,T_{+i}^h\nu_0)- W(t_0,w_0,µ_0,T_{-i}^h\nu_0))\\
&+ 2 \beta \left(\left\langle\nabla \nu_0 \bigg| \dmU (\cdot,µ_0,\cdot) \bigg| \nabla µ_0\right \rangle + \left\langle\nabla \mu_0 \bigg| \frac{\delta V}{\delta m} (\cdot,\nu_0,\cdot) \bigg| \nabla \nu_0\right \rangle \right)
\end{aligned}
\end{equation}
Using the definition of monotone solutions of both $U$ and $V$, we then arrive at a contradiction following the same argument as in the proof of Theorem \ref{uniqtime}.
\end{proof}
\begin{Rem}
Let us insist on the fact that for two smooth functions $U,V: \mathbb{T}^d\times \mptd \to \mathbb{R}$, the crossed derivative of $W (µ,\nu) = \langle U(µ) - V(\nu),µ - \nu\rangle$ is given, up to a constant, by
\begin{equation}
\forall x,y \in \mathbb{T}^d, \frac{\delta^2 W}{\delta µ \delta \nu}(µ,\nu,x,y) = \dmU(µ,y,x) + \frac{\delta V}{\delta m}(\nu,x,y).
\end{equation}
\end{Rem}
\begin{Rem}
If we were to work on an Hilbert space, then an analogous of Theorem 3.2 in \citep{crandall1992user} could probably have been established using the properties of semi convex functions in Hilbert space such as in \citep{lasry1986remark}, which would have allowed us to use a weaker notion of solutions.
\end{Rem}
Let us remark that following Section \ref{sec:exist}, we can obtain a result of existence of monotone solutions of \eqref{me2} by weakening the assumptions of Theorem 2.11 in \citep{cardaliaguet2019master}. As the approach is very similar to the one of Section \ref{sec:exist}, we only state the following without a demonstration.
\begin{Theorem}\label{thm:exist2}
Assume that the assumptions of Theorem \ref{existclassic} hold and that $f$ and $U_0$ can be approximated uniformly by functions $f_n$ and $U_{0,n}$ satisfying for some $C_n>0$
\begin{equation}
\begin{aligned}
\sup_{m \in \mptd}& \left(   \|f(\cdot,m)\|_{2 + \alpha} + \left\| \frac{\delta f(\cdot,m,\cdot)}{\delta m}  \right\|_{(2+ \alpha, 2 + \alpha)} + \left\| \frac{\delta^2 f(\cdot,m,\cdot,\cdot)}{\delta m^2}  \right\|_{(2+ \alpha, 2 + \alpha,2 + \alpha)} \right)\\
 &+ \text{Lip}_{2 +\alpha} \left( \frac{\delta^2 f}{\delta m^2} \right) \leq C_n.
 \end{aligned}
\end{equation}
 \begin{equation}
 \begin{aligned}
\sup_{m \in \mptd}& \left(   \|U_0(\cdot,m)\|_{3 + \alpha} + \left\| \frac{\delta U_0(\cdot,m,\cdot)}{\delta m}  \right\|_{(3+ \alpha, 3 + \alpha)} + \left\| \frac{\delta^2 U_0(\cdot,m,\cdot,\cdot)}{\delta m^2}  \right\|_{(3+ \alpha, 3 + \alpha, 3 + \alpha)} \right)\\
& + \text{Lip}_{3 +\alpha} \left( \frac{\delta^2 U_0}{\delta m^2} \right) \leq C_n.
\end{aligned}
\end{equation}
Then there exists a monotone solution of \ref{me2} in the sense of Definition \ref{def:me2}.
\end{Theorem}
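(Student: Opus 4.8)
The plan is to follow the scheme of Theorem \ref{existmon}: regularise the data, invoke the classical theory of \citep{cardaliaguet2019master} for the regularised equations, extract a limit through a priori estimates that are uniform in the regularisation, and identify this limit as a monotone solution by a stability argument. First I would fix $f_n\to f$ and $U_{0,n}\to U_0$ (uniformly) with $f_n,U_{0,n}$ meeting the stated bounds with constant $C_n$; by Theorem 2.11 of \citep{cardaliaguet2019master} each regularised problem has a classical solution $U_n$ of \eqref{me2}, which is $\mathcal{C}^1$ (indeed $\mathcal{C}^2$) in the measure argument, and each $U_n$ is a monotone solution of its own equation by the usual consistency argument.

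The key point is that every bound I need is governed by the first-order-in-measure data norms of the hypotheses of Theorem \ref{existclassic}, which the approximants inherit uniformly in $n$ (e.g.\ through mollification, which does not increase them); the second-order norms $C_n$ serve only to produce the smooth $U_n$ and are allowed to blow up. Thus the analogue of Proposition \ref{holderU} for \eqref{me2} (established in \citep{cardaliaguet2019master}) yields a modulus of continuity of $U_n$ in $(t,x,m)$ uniform in $n$, while the bounds on $\text{Lip}_{2+\alpha}(\frac{\delta f}{\delta m})$ and $\text{Lip}_{3+\alpha}(\frac{\delta U_0}{\delta m})$ provide, via the same theory, uniform bounds on $\dmU_n$ and a uniform Lipschitz modulus of $m\mapsto\dmU_n(\cdot,m,\cdot)$ into $\mathcal{C}^2$. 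Ascoli--Arzela then gives a subsequence with $U_n\to U_*$ in $\mathcal{B}_t$ and $\dmU_n\to\dmU_*$ locally uniformly; in particular $U_*$ is derivable in the measure argument, so it is an admissible candidate for Definition \ref{def:me2}, and this convergence of $\dmU_n$ is exactly what is needed to pass to the limit in the term $2\beta\langle\nabla\nu\,|\,\dmU\,|\,\nabla m_0\rangle$, the only place where the first measure derivative enters the definition.

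For the stability step I would argue as in Proposition \ref{stabtime}: given $\phi\in\mathcal{C}^2$, $\nu\in\mmtd$, a smooth $\vartheta$ and a strict minimum $(t_*,m_*)$ of $(t,m)\mapsto\langle U_*(t,\cdot,m)-\phi,m-\nu\rangle-\vartheta(t)$, use Lemma \ref{Stegall} to produce perturbations $\phi_n\to 0$ and $\delta_n\to 0$ so that the functional built from $U_n$ has a strict minimum at $(t_n,m_n)\to(t_*,m_*)$, and write the defining inequality of Definition \ref{def:me2} for $U_n$ there. All the first-order terms pass to the limit thanks to the uniform convergence of $U_n$ (with $\|\Delta(U_n-U_*)\|_0\to 0$) and of $\dmU_n$.

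The hard part will be the passage to the limit in the second-order term
\[
\beta\liminf_{h\to 0}\sum_{i=1}^d h^{-2}\big(2W(t_0,m_0)-W(t_0,T_{+i}^h m_0)-W(t_0,T_{-i}^h m_0)\big).
\]
Since the genuine derivatives $\frac{\delta^2 U_n}{\delta m^2}$ are controlled only by the blowing-up constants $C_n$, one cannot pass to the limit in them directly; this is precisely why Definition \ref{def:me2} is phrased through difference quotients of $W$, which only see the uniformly convergent $U_n$. For each fixed $h$ the inner sum converges, as $n\to\infty$, to its counterpart for $W_*$; the difficulty is that the inequality for the smooth $U_n$ holds naturally in the limit $h\to 0$ (where the quotient is an exact derivative), so one must justify exchanging $\liminf_{h\to 0}$ with $\lim_{n\to\infty}$. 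The available leverage is the sign of each second difference at a minimum, $2W-W(T_{+i}^h)-W(T_{-i}^h)\leq 0$, together with the lower bound on the $h\to 0$ limit coming from the equation for $U_n$, which confine $\liminf_h\sum_i h^{-2}(\cdots)$ to a bounded interval; closing the argument requires additional compactness of these second-order quotients, in the spirit of the Crandall--Ishii lemma \citep{crandall1992user}. This interchange is the genuinely delicate point, and the restriction to $\mathcal{C}^1$-in-measure solutions is exactly what is meant to keep it tractable.
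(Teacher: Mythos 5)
Your proposal follows the same route the paper intends: for this theorem the paper gives no demonstration at all, saying only that ``the approach is very similar to the one of Section \ref{sec:exist}'', i.e.\ regularise the data, invoke Theorem 2.11 of \citep{cardaliaguet2019master} to get classical solutions $U_n$ of \eqref{me2}, obtain compactness from estimates depending only on the uniform first-order data norms, and conclude by a stability argument. Your two refinements --- that the constants $C_n$ are allowed to blow up because they serve only to produce the smooth $U_n$, and that one needs uniform bounds and equicontinuity for $\dmU_n$ (available from the Lipschitz estimates of \citep{cardaliaguet2019master} under the assumptions of Theorem \ref{existclassic}) so that the limit $U_*$ is derivable in $m$ and the term $2\beta\langle\nabla\nu\,|\,\dmU\,|\,\nabla m_0\rangle$ passes to the limit --- are exactly the ingredients this scheme needs and the paper leaves implicit.

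However, the step you flag as ``genuinely delicate'' is a real gap, in your write-up and in the paper alike: the paper states no second-order analogue of the stability result (Proposition \ref{stabtime}) and omits the proof of this theorem entirely, so nothing in the paper resolves it either. Concretely, write $W_n(t,m)=\langle U_n(t,\cdot,m)-\phi+\phi_n,m-\nu\rangle-\vartheta(t)-\delta_n t$ with minimum at $(t_n,m_n)$, let $W_*$ be the corresponding functional for $U_*$ with strict minimum at $(t_*,m_*)$, and set
\begin{equation*}
\Phi_n(h)=\sum_{i=1}^d h^{-2}\bigl(2W_n(t_n,m_n)-W_n(t_n,T^h_{+i}m_n)-W_n(t_n,T^h_{-i}m_n)\bigr),
\end{equation*}
with $\Phi_*(h)$ defined analogously from $W_*$ at $(t_*,m_*)$. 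Uniform convergence gives $\Phi_*(h)=\lim_n\Phi_n(h)$ for each fixed $h$, while the equation for $U_n$ controls only $\lim_{h\to 0}\Phi_n(h)$ for each fixed $n$; what the stability argument needs is
\begin{equation*}
\liminf_{h\to 0}\Phi_*(h)\;\geq\;\limsup_{n\to\infty}\,\liminf_{h\to 0}\Phi_n(h),
\end{equation*}
and this interchange of limits is false for general sequences of this type: take $\Phi_n(h)=-1$ for $h>1/n$ and $0$ otherwise, so the left side equals $-1$ and the right side equals $0$. The nonpositivity of the second differences at a minimum only bounds $\liminf_{h\to 0}\Phi_*(h)$ from \emph{above}, which is useless here; indeed, for a limit $U_*$ that is merely $\mathcal{C}^1$ in $m$, this liminf could a priori be $-\infty$, in which case the defining inequality of Definition \ref{def:me2} cannot hold. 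Closing the argument requires some uniform one-sided control of $\Phi_n(h)$ in $n$ --- for instance a semiconcavity-type estimate along the translations $T^h_{\pm i}$ uniform in $n$, or the Crandall--Ishii-type compactness you allude to --- and no such estimate follows from the bounds quoted so far. So your diagnosis is accurate and your scheme is the paper's own, but the proof (yours and the paper's) is incomplete at precisely this point.
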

\begin{Rem}
As the second order terms in \eqref{me2} can be seen as limits of common jumps for jump operators which are non expansive in $L^2(\mathbb{T}^d)$ (they are translations), let us remark that the a priori estimate established in Proposition \ref{prop:est} is also valid for \eqref{me2}. This could have been observed by applying directly the same technique as in the proof of Proposition \ref{prop:est} on \eqref{me2}.
\end{Rem}

\section{Conclusion and future perspectives}\label{sec:conclusion}
We have presented a notion of solution for MFG master equations which allows us to work with solutions which are merely continuous for first order equations and only one time differentiable for second order equations (each time with respect to the measure argument). This notion is built to enjoy nice uniqueness properties but it also verifies strong stability properties. Even though we do not treat exhaustively the question of existence, the stability properties is of course helpful to establish such results. Let us mention that the generalization of our results to master equation\color{black}s \color{black}which do not model exactly MFG but which have a monotone structure (such as in \citep{lions2020extended} in the continuous setting) is of course immediate following \citep{bertucci2020monotone} modulo some assumptions on the non-linearities. Another immediate extension which follows from \citep{bertucci2020monotone} is the one to master equations having another type of monotonicity than the usual one, such as in section 2.3 in \citep{bertucci2020monotone}. The extension to master equations on $\mathcal{M}_+(\mathbb{T}^d) := \{m \in \mmtd| m \geq 0\}$ is also straightforward.\\

At this point, we believe worth mentioning several extensions or future directions to explore, that we believe to be meaningful. Perhaps the most important one concerns the extension of monotone solutions of second order to merely continuous functions of the measure argument. In this setting, such an improvement should probably pass by a better understanding of the derivatives of functions on $\mptd$, in order to establish an analogous of Theorem 3.2 in \citep{crandall1992user}. Another interesting extension is the adaptation of the previous results to MFG of optimal stopping or other singular controls, as it has been done in \citep{bertucci2020monotone} in the finite state space case. For such problems, a uniqueness result for monotone solutions of the master equation (which is now posed on the set of non-negative measure and not on the set of probability measures) follows quite immediately from the the present study, using a similar formulation such as in \citep{bertucci2020monotone}. However, existence questions require key estimates which have not been proven at the moment.\\

Other extensions of this work include the study of problems on more general domain than the Torus, thus involving boundary conditions, as well as numerical methods to approximate monotone solutions.

\bibliographystyle{plainnat}
\bibliography{bibremarks}

\appendix

\end{document}